\newcommand\numberthis{\addtocounter{equation}{1}\tag{\theequation}}
\newtheorem{theorem}{Theorem}[section]
\newtheorem{lemma}[theorem]{Lemma}
\newtheorem{corollary}[theorem]{Corollary}
\newtheorem*{remark*}{Remark}
\def\la{\left\langle}
\def\ra{\right\rangle}
\def\lb{\left(}
\def\rb{\right)}
\def\ln{\left\|}
\def\rn{\right\|}
\def\lab{\left|}
\def\rab{\right|}
\def\lcb{\left\{}
\def\rcb{\right\}}
\def\lsb{\left[}
\def\rsb{\right]}
\def\ba{\bm{a}}
\def\bx{\bm{x}}
\def\by{\bm{y}}
\def\bz{\bm{z}}
\def\bu{\bm{u}}
\def\bb{\bm{b}}
\def\bv{\bm{v}}
\def\bh{\bm{h}}
\def\bq{\bm{q}}
\def\be{\bm{e}}
\def\bd{\bm{d}}
\def\bzero{\bm{0}}
\def\bw{\bm{w}}
\def\btlow{ \vartheta_{\mbox{\scriptsize\normalfont lb}}}
\def\btup{ \vartheta_{\mbox{\scriptsize\normalfont up}}}
\def\epup{ \varepsilon{\mbox{\scriptsize\normalfont up}}}
\def\BI{\bm{I}}
\def\TM{{\mbox{\normalfont\textbf{I}}}}
\def\BA{\bm{A}}
\def\BX{\bm{X}}
\def\BI{\bm{I}}
\def\BZ{\bm{Z}}
\def\BW{\bm{W}}
\def\BF{\bm{F}}
\def\BG{\bm{G}}
\def\N{\mathcal{N}}
\def\A{\mathcal{A}}
\def\ME{\mathcal{E}}
\def\P{\mathcal{P}}
\def\I{\mathcal{I}}
\def\T{\mathcal{T}}
\def\R{\mathbb{R}}
\def\C{\mathbb{C}}
\DeclareMathOperator*{\dist}{\mbox{\normalfont dist}}
\DeclareMathOperator*{\rank}{\mbox{\normalfont rank}}
\newcommand{\mean}[1] {\mathbb{E}{\lsb#1\rsb}}
\newcommand{\prob}[1] {\mathbb{P}{\lsb#1\rsb}}
\def\dsone{\mathds{1}}
\newcommand{\onesub}[1] {\dsone_{\lcb #1\rcb}}
\title{Solving systems of phaseless equations via Riemannian optimization with optimal sampling complexity}
\author{Jian-Feng Cai\thanks{Department of Mathematics, Hong Kong University of Science and Technology, Clear Water Bay, Kowloon, Hong Kong SAR, China.}
\and Ke Wei\thanks{School of Data Science, Fudan University, Shanghai, China.}}
\begin{document}
\maketitle
\begin{abstract}
A Riemannian gradient descent algorithm and a truncated variant are presented to solve systems of phaseless equations $|\BA\bx|^2=\by$. The algorithms are developed by exploiting the inherent low rank structure of the problem based on the embedded manifold of rank-$1$ positive semidefinite matrices. Theoretical recovery guarantee has been established for the truncated variant, showing that the algorithm is able to achieve successful recovery when the number of equations is proportional to the number of unknowns. Two key ingredients in the analysis are the restricted well conditioned property and the restricted weak correlation property of the associated truncated linear operator. 
 Empirical evaluations show that our algorithms are competitive with other state-of-the-art first order nonconvex approaches with provable guarantees. 
\end{abstract}
\section{Introduction}\label{sec:introduction}
In this paper we are interested in finding a vector $\bx\in\R^n/\C^n$ which solves the following system of  phaseless equations:
\begin{align*}
|\BA\bx|^2=\by,\numberthis\label{eq:problem}
\end{align*}
where $\BA\in\R^{m\times n}/\C^{m\times n}$ and $\by\in\R^{m}$ are both known. Compared with linear systems of the form $\BA\bx=\sqrt{\by}$, it is self-evident that after taking the entrywise modulus phase information is missing from \eqref{eq:problem}.  Thus, seeking a solution to \eqref{eq:problem} is often referred to as generalized phase retrieval, extending the classical phase retrieval problem where $\BA$ is a Fourier type matrix to a general setting. Phase retrieval arises in a wide range of practical context such as  X-ray crystallography \cite{Ha93a}, diffraction imaging \cite{Buetal07} and microscopy \cite{Mietal08}, where it is hard or infeasible to record the phase information when detecting an object. Many heuristic yet effective algorithms have been developed for phase retrieval, for example, Error Reduction, Hybrid Input Output and other variants \cite{GeSaPhase,fienup1,fienup2,wavefront}.

Injectivity  has been investigated in \cite{BaCaEd06a,CoEdHeVi2013a}, showing that $m\geq 2n-1$ real generic measurements or $m\geq 4n-4$ complex generic measurements are sufficient to determine a unique solution of \eqref{eq:problem} up to a global phase vector. Despite this, solving systems of phaseless equations is computationally intractable. For simplicity, let us consider the real case. Then one can immediately see the combinatorial nature of the problem since there there are $2^m$ possible signs for $\by$. In fact, a very simple instance of \eqref{eq:problem} is equivalent to the NP-hard stone problem \cite{CC:CPAM:17}.

Over the past few years computing methods with provable guarantees have received extensive  investigations for solving systems of phaseless equations, typically based on the Gaussian measurement model. In a pioneering work by Cand\`es et al. \cite{CSV:CPAM:13}, a convex relaxation via trace norm minimization, known as PhaseLift, was studied. The approach was developed based on the fact that \eqref{eq:problem} can be cast as a rank-$1$ positive semidefinite matrix recovery problem. Inspired by the work on low rank matrix recovery, it was established that under the Gaussian measurement model PhaseLift was able to find the solution of \eqref{eq:problem} with high probability provided that\footnote{The notation $m\gtrsim f(n)$ means that there exists an absolute constant $C>0$ such that $m\ge C\cdot f(n)$.}  $m\gtrsim n\log n$. This sampling complexity was subsequently sharpened to $m\gtrsim n$ in \cite{CL:FCM:14}. The recovery guarantee of PhaseLift under coded diffraction model was studied in \cite{phaselift4,phaselift5}. There were also several other convex relaxation methods for solving systems of phaseless equations; see for example \cite{phasecut,phasemax1,phasemax2,phasemax3}.

Convex methods are amenable to detailed analysis, but they are not computationally desirable for large scale problems. Thus  more scalable yet still provable nonconvex methods have received particular attention recently. A resampled variant of Error Reduction  has been investigated in \cite{alt_min_pr}, showing that $m\gtrsim n\log^3 n+n^2\log^2 n\log(1/\epsilon)$ number of measurements are sufficient for the algorithm to attain an $\epsilon$-accuracy. A gradient descent algorithm called Wirtinger Flow (WF) was developed based on an intensity-based loss function, and it was shown that the algorithm could achieve successful recovery provided $m\gtrsim n\log n$ \cite{CLS:TIT:15}.  A variant of WF, known as Truncated Wirtinger Flow (TWF), was introduced in \cite{CC:CPAM:17} based on the Poisson loss function, which could achieve successful recovery under the optimal sampling complexity $m\gtrsim n$. In \cite{ZCL:ICML:16}, the algorithm was analyzed when  median truncation was used.  Another gradient descent algorithm, termed Truncated Amplitude Flow (TAF), was developed in \cite{WGE:TIT:18} based on an  amplitude-based loss function. Optimal theoretical recovery guarantee of TAF was similarly established under the Gaussian measurement model. In \cite{phase_kacz}, the classical Kaczmarz method for solving  systems of linear equations was extended to solve the generalized phase retrieval problem. The optimal sampling complexity for the successful recovery of the Kaczmarz method was established in \cite{phase_kacz02,phase_kacz03} when the unknown vector and the measurement matrix were both real. 

The nonconvex algorithms mentioned in the last paragraph are all analyzed based on some local geometry and hence closeness of the initial guess to the ground truth is required \cite{CLS:TIT:15,CC:CPAM:17,WGE:TIT:18,phase_kacz02,phase_kacz03}. In contrast, there is a line of research which attempts to study the global geometry of related problems; see \cite{GJZ:ICML:17,SQW:FCM:18,GJZ:ICML:17,GJZ:ArXiv:17} and references therein. Many algorithms have also been designed to utilize the global geometry effectively \cite{GHJY:COLT:15,JGNKJ:ArXiv:17,CDHS16,AAZBHM16}.
We omit further details as it is beyond the scope of this paper and interested
readers are referred to the references.

\paragraph{Main contributions} In this paper we propose  a Riemannian gradient descent algorithm for solving systems of phaseless equations. The algorithm is developed by exploiting the inherent low rank structure of \eqref{eq:problem} based on the embedded manifold of low rank matrices, similar to the Riemannian gradient descent algorithm for the low rank matrix recovery problem studied in \cite{WCCL:SIMAX:16,WCCL:ArXiv:16}. That being said,  there are two key differences. On the algorithmic side, an additional structure, i.e., the positive semidefinite property of the underlying matrix, is incorporated when designing the algorithm. On the theoretical side, since the linear operator related to \eqref{eq:problem}
 does not possess a good concentration around its expectation, it is not very clear how to establish the convergence of the vanilla Riemannian gradient descent algorithm. To overcome the difficulty, we introduce an equally effective truncated variant of the algorithm and theoretical recovery guarantee has been established for this variant. Since the linear operator considered in this paper is substantially different from the one considered in \cite{WCCL:SIMAX:16,WCCL:ArXiv:16}, it is by no means trivial to establish the convergence of the variant. Our work has been greatly influenced by \cite{CC:CPAM:17}, though the key ideas in the analysis are significantly different. Finally, empirical performance evaluations show that our algorithms are competitive with other-state-of the art provable nonconvex gradient descent algorithms.

\paragraph{Outline and notation}
The remainder of this paper is structured as follows. The Riemannian gradient descent algorithm and its truncated variant is presented in Section~\ref{sec:algs}, together with the exact recovery guarantee for the truncated algorithm. In Section~\ref{sec:numerics} we compared our algorithms with TAF and TWF via a set of numerical experiments.  The proofs of the main results are presented in Section~\ref{sec:proofs1}, with the proofs of the technical lemmas being presented in Section~\ref{sec:proofs2}. We conclude this paper with some potential future directions in Section~\ref{sec:conclusion}.

Throughout the paper we use the following notational conventions. We denote vectors by bold lowercase letters and matrices by bold uppercase letters. In particular, we fix $\bx$ and $\BX$ as the ground truth and its lift matrix (i.e., $\BX=\bx\bx^\top$). We denote by $\|\BZ\|$ and $\|\BZ\|_F$ the spectral norm and Frobenius norm of the matrix $\BZ$, respectively. Restricting to a vector $\bz$, $\|\bz\|$ denotes its $\ell_2$-norm, and the $\ell_1$-norm of $\bz$ is denoted by $\|\bz\|_1$. Operators are denoted by calligraphic
letters, for example, $\A$ denotes a linear operator from $n\times n$ symmetric matrices to vectors of length $m$. Moreover, we use $\tau_x$, $\tau_z$, $\tau_h$ and $\tau_{h,z}$ to denote the truncation parameters, where 
the former three are predetermined and the last one is computed via $\tau_{h,z}=\tau_z+\lb 0.3\tau_h\lb\tau_z+1.2\tau_x\rb+\tau_z^2 \rb^{1/2}$.
\section{Riemannian gradient descent and a truncated variant}\label{sec:algs}
In this section, we present the Riemannian gradient descent algorithm   and its truncated variant for solving systems of phaseless equations.  For ease of exposition, we focus on the real case, but emphasize that the algorithms and the corresponding theoretical results are readily extended to the complex case. 

Let $\ba_k^\top$ denote the $k$-th row of $\BA$, and let $\A$ be a linear operator from $n\times n$ symmetric matrices to vectors of length $m$, defined as \begin{align*}
\A(\BW) = \lcb\langle\BW,\ba_k\ba_k^\top\rangle\rcb_{k=1}^m,\quad\forall~\BW\in\R^{n\times n}\mbox{ being symmetric}.\numberthis\label{eq:A}
\end{align*}
Then a simple algebra yields that 
\begin{align*}
y_k = |\ba^\top_k\bx|^2 = \langle\ba_k\ba_k^\top,\BX\rangle,
\end{align*}
where $\BX=\bx\bx^\top$ is the  lift matrix defined from $\bx$. 
Noticing the one to one correspondence between $\BX$ and $\bx$, instead of reconstructing $\bx$, one can attempt to reconstruct $\BX$ by seeking a rank-$1$ positive semidefinite matrix which fits the measurements as well as possible: 
\begin{align*}
\min_{\BZ}\frac{1}{2}\|\A(\BZ)-\by\|^2\quad\mbox{subject to}\quad \rank(\BZ)=1\mbox{ and }\BZ\succeq 0.\numberthis\label{eq:low_rank}
\end{align*}
If we parameterize $\BZ$ as $\BZ=\bz\bz^\top$, where $\bz\in\R^{n}$, then the constraints in \eqref{eq:low_rank} can be removed and a gradient descent iteration with respect to $\bz$ leads to the Wirtinger Flow algorithm for solving systems of phaseless equations \cite{CLS:TIT:15}. In this paper we will exploit the low rank structure directly based on the embedded manifold of rank-$1$ and positive semidefinite matrices.
\subsection{Riemannian gradient descent}
\begin{algorithm}[ht!]
\caption{Riemannian Gradient Descent (RGrad)}\label{alg:RGrad}
\begin{algorithmic}
\Statex \textbf{Initial guess: }$\BZ_0$.
\For{$l=0,1,\cdots$}\\
\quad 1. $\BG_l  =\A^\top(\by-\A(\BZ_l))$,\\
\quad 2. $\BZ_{l+1}=\T_1(\BZ_l+\alpha_l\P_{T_l}(\BG_l))$.
\EndFor
\end{algorithmic}
\end{algorithm}

It is well-known that the set of fixed rank (e.g., rank-$1$ here) positive semidefinite matrices form a smooth  manifold  when embedded in $\R^{n\times n}$ \cite{HuangGaZh16}.  A Riemannian gradient descent algorithm  (RGrad) based on this embedded manifold structure is described in Algorithm~\ref{alg:RGrad}. Let $\BZ_l$ be the current estimate of $\BX$. RGrad first updates $\BZ_l$ along the projected gradient descent direction $\P_{T_l}(\BG_l)$ with a stepsize $\alpha_l$, where $T_l$ is  the tangent space of the embedded manifold at $\BZ_l$, followed by the projection onto the set of rank-$1$ and positive semidefinite matrices via a thresholding operator  denoted by $\T_1$, which will be discussed in detail later.

In the expression for the gradient descent direction $\BG_l$, $\A^\top$ denotes the adjoint of $\A$, given by 
\begin{align*}
\A^\top(\bb) = \sum_{k=1}^mb_k\ba_k\ba_k^\top, \quad\forall~ \bb\in\R^{m}.\numberthis\label{eq:At}
\end{align*}
Assuming $\BZ_l$ is a rank-$1$ positive semidefinite matrix, then it admits the eigenvalue decomposition $\BZ_l=\sigma_l\bu_l\bu_l^\top$ with $\sigma_l> 0$. The tangent space of the embedded manifold of rank-$1$ positive semidefinite matrices at $\BZ_l$ is given by \cite{HuangGaZh16}
\begin{align*}
T_l =\{\bu_l\bw^\top+\bw\bu_l^\top~|~\bw\in\R^n\}.\numberthis\label{eq:tangent}
\end{align*}
Given an arbitrary  matrix $\BW\in\R^{n\times n}$, the projection of $\BW$ onto $T_l$ can be computed as follows: 
\begin{align*}
\P_{T_l}(\BW) = \bu_l\bu_l^\top\BW+\BW\bu_l\bu_l^\top-\bu_l\bu_l^\top\BW\bu_l\bu_l^\top
\numberthis\label{eq:projT}
\end{align*}
If we decompose a nonzero vector $\bw\neq \bzero$ into a weighted sum of $\bu_l$ and 
$\bv_l$ where $\|\bv_l\|=1$ and $\bu_l\perp\bv_l$ as follows
\begin{align*}
\bw=a\bu_l+b\bv_l,
\end{align*}
then it can be easily seen that each matrix $\BW_l=\bu_l\bw^\top+\bw\bu_l^\top$ in $T_l$ has the following decomposition
\begin{align*}
\BW_l = \begin{bmatrix}
\bu_l & \bv_l
\end{bmatrix}
\begin{bmatrix}
2a & b\\
b& 0
\end{bmatrix}
\begin{bmatrix}
\bu_l^\top\\
\bv_l^\top
\end{bmatrix}.
\end{align*}
A simple algebra reveals that the middle $2\times 2$ symmetric matrix has at least a nonnegative eigenvalue and also at least a nonpositive eigenvalue, so does $\BW_l$ since $\begin{bmatrix}\bu_l & \bv_l\end{bmatrix}$ is an $n\times 2$ orthogonal matrix. Also due to this fact, the eigenvalue decomposition of $\BW_l$ can be constructed very easily from the eigenvalue decomposition of the middle $2\times 2$ matrix. 

In the second step of RGrad, $\T_1$ is a type of retraction in Riemannian optimization \cite{AbMaSe2008manifold} which returns the best rank-$1$ and positive semidefinite approximation of a matrix. Noticing that $\BZ_l+\alpha_l\P_{T_l}(\BG_l)\in T_l$ is of rank at most $2$, the best approximation can be computed by only retaining the larger nonnegative eigenvalue and the corresponding eigenvector in its eigenvalue decomposition.  The stepsize $\alpha_l$ in RGrad can either be   a constant or be computed adaptively. An exact linear search along $\P_{T_l}(\BG_l)$ yields a closed form stepsize given by 
\begin{align*}
\alpha_l = \frac{\|\P_{T_l}(\BG_l)\|_F^2}{\|\A(\P_{T_l}(\BG_l))\|_2^2}.\numberthis\label{eq:RGrad_stepsize}
\end{align*}

Before proceeding, it is worth noting that the Riemannian optimization algorithms based on the embedded manifold of low rank matrices have already beed developed and studied for unstructured low rank matrix recovery problems. In \cite{Van:SIOPT:13},  a Riemannian conjugate gradient descent algorithm was  introduced for  matrix completion. The difference and connection between the Riemannian optimization algorithms based on the embedded manifold of fixed rank $r$ matrices and the iterative hard thresholding algorithms for low rank matrix recovery were pointed out in \cite{KeWeiThesis}, and then exact recovery guarantees of  the corresponding Riemannian gradient descent and conjugate gradient descent algorithms were established in \cite{WCCL:SIMAX:16,WCCL:ArXiv:16} for matrix sensing and  matrix completion respectively. In contrast to the general low rank matrix recovery problem where the target matrix is  low rank but unstructured, the target matrix of interest in this paper is not only low rank but also positive semidefinite.  Thus, in the Riemannian gradient descent algorithm for solving systems of phaseless equations tangent spaces consisting of symmetric matrices are utilized in the design of the algorithm. Meanwhile, the eigenvalue decomposition instead of the singular value decomposition is applied to compute the projection onto the set of rank-$1$ and positive semidefinite matrices. Additionally, the sensing operator $\A$ here is significantly different with that for 
general low rank matrix recovery, which raises a new challenge for the recovery guarantee analysis.
\subsection{Truncated Riemannian gradient descent}
As stated earlier, exact recovery guarantees of Riemannian optimization based on the embedded manifold of low rank matrices have been investigated in \cite{WCCL:SIMAX:16,WCCL:ArXiv:16} for unstructured low rank matrix recovery. A key component in the analysis is the restricted isometry property of the sensing operator. However, even assuming $\ba_k\sim\N(0,\BI_n)$ in the measurement model, the restricted isometry property does not hold for the corresponding sensing operator defined in \eqref{eq:A}. The reason is that, in this case, one can always construct a rank-$1$  matrix whose column space is well-aligned with that of the measurement matrix $\ba_k\ba_k^\top$ for some $1\leq k\leq m$. Thus the incoherence between the underlying low rank matrix and the measurement matrices will be violated; see \cite{CLS:TIT:15} for details. 

\begin{algorithm}[ht!]
\caption{Truncated Riemannian Gradient Descent (TRGrad)}\label{alg:tRGrad}
\begin{algorithmic}
\Statex \textbf{Initial guess: }$\BZ_0$.
\For{$l=0,1,\cdots$}\\
\quad 1. $\BG_l  =\A_l^\top(\by-\A_l(\BZ_l))$,\\
\quad 2. $\BZ_{l+1}=\T_1(\BZ_l+\alpha_l\P_{T_l}(\BG_l))$.
\EndFor
\end{algorithmic}
\end{algorithm}

Inspired by the idea of truncation in  \cite{CC:CPAM:17}, we introduce   a truncated variant of the Riemannian gradient descent algorithm where the sensing operator that is used in each iteration is computed adaptively from $\A$ based on the measurement vector  $\by$ and the current estimate $\BZ_l$; see Algorithm~\ref{alg:tRGrad}. 
Recall that $\by=|\BA\bx|^2$ is the measurement vector. Given any rank-$1$ positive semidefinite matrix $\BZ=\bz\bz^\top$, let $\A_{\bz}$ be the linear operator associated with $\BZ$, defined as
\begin{align*}
\A_{\bz}(\BW) = \lcb\langle\BW,\ba_k\ba_k^\top\rangle\dsone_{\ME_1^k(\bx)\cap \ME_1^k(\bz)\cap\ME_2^k(\bz)}\rcb_{k=1}^m,\numberthis\label{eq:Az}
\end{align*}
where $\dsone_{{\bm{\cdot}}}$ is an indicator function, and $\ME_1^k(\bx)$,  $\ME_1^k(\bz)$ and $\ME_2^k(\bz)$ are  three collections of events determining the truncation rules. Here, $\ME_1^k(\bx)$,  $\ME_1^k(\bz)$ and $\ME_2^k(\bz)$ are given by 
\begin{align*}
&\ME_1^k(\bx) = \lcb \sqrt{y_k}\leq \tau_x\sqrt{\frac{\ln\by\rn_1}{m}}\rcb,\numberthis\label{eq:truncation_rules_x1}\\
&\ME^k_1(\bz)=\lcb |\ba_k^\top\bz|\leq \tau_z\|\bz\|\rcb,\numberthis\label{eq:truncation_rules_z1}\\
&\ME^k_2(\bz)=\lcb\lab y_k-|\ba_k^\top\bz|^2\rab\leq\frac{\tau_h}{m}\|\by-\A(\bz\bz^\top)\|_1\frac{|\ba_k^\top\bz|+\sqrt{y_k}}{\ln\bz\rn}\rcb.
\numberthis\label{eq:truncation_rules_z2}\end{align*}
with prescribed truncation parameters $\tau_x$, $\tau_z$ and $\tau_h$. In words, for fixed $\BZ=\bz\bz^\top$, if $\ba_k$ satisfies the truncation rules specified by $\ME_1^k(\bx)$,  $\ME_1^k(\bz)$ and $\ME_2^k(\bz)$, then the $k$-th entry of $\A_{\bz}(\BW)$ is given by $\langle\BW,\ba_k\ba_k^\top\rangle$; otherwise it is set to $0$.
Note that the adjoint of $\A_{\bz}$ is given by
\begin{align*}
\A_{\bz}^\top(\bb)=\sum_{k=1}^m b_k\ba_k\ba_k^\top\dsone_{\ME_1^k(\bx)\cap \ME_1^k(\bz)\cap\ME_2^k(\bz)},\quad\forall~\bb\in\R^m.
\end{align*}
To simplify the notation, we have used $\A_l$ and $\A_l^\top$ to denote $\A_{\bz_l}$ and $\A_{\bz_l}^\top$  respectively in Algorithm~\ref{alg:tRGrad}.
\subsubsection{Main result: Recovery guarantee of TRGrad}
We begin with an informal discussion on what the truncation rules  can imply. Assume $\ba_k\sim\N(0,\BI_n)$, $k=1,\cdots, m$, are independent. Noting that 
$\sqrt{y_k}=|\ba_k^\top\bx|$ and\footnote{The notation $\asymp$ means the left hand side can be lower as well as upper bounded by a multiple of the right hand side with different universal constants.} $\|\by\|_1/m\asymp\|\bx\|^2$, the first event $\ME_1^k(\bx)$ is equivalent to 
\begin{align*}
\lcb |\ba_k^\top\bx|\lesssim\|\bx\|\rcb\numberthis\label{eq:tmp1}
\end{align*}
Assume $\bz\approx \bx$. We have $\ba_k^\top\bz\approx\ba_k^\top\bx$. {It follows that} $|\ba_k^\top\bz|+\sqrt{y_k}\asymp|\ba_k^\top\bz|$,
\begin{align*}
\lab y_k-|\ba_k^\top\bz|^2\rab &=|\ba_k^\top(\bz+\bx)||\ba_k^\top(\bz-\bx)|\asymp |\ba_k^\top\bz||\ba_k^\top(\bz-\bx)|,\quad\mbox{and}\\
\frac{1}{m}\|\by-\A(\bz\bz^\top)\|_1&\asymp\mean{\lab|\ba_k^\top\bx|^2-|\ba_k^\top\bz|^2\rab}=\mean{|\ba_k^\top(\bz+\bx)||\ba_k^\top(\bz-\bx)|}\\
&\asymp\|\bz\|\|\bz-\bx\|.
\end{align*}
Substituting these into \eqref{eq:truncation_rules_z2}, after canceling common factors, one can roughly reduce $\ME_2^k(\bz)$ to 
\begin{align*}
\lcb |\ba_k^\top(\bz-\bx)|\lesssim\|\bz-\bx\|\rcb.\numberthis\label{eq:tmp2}
\end{align*}
Noticing that \eqref{eq:truncation_rules_z1} has the same form as 
\eqref{eq:tmp1} and \eqref{eq:tmp2}, the truncation rules specified by 
$\ME_1^k(\bx)$,  $\ME_1^k(\bz)$ and $\ME_2^k(\bz)$ basically exclude those components where the measurement matrix $\ba_k\ba_k^\top$ is well-aligned with the column subspaces of the matrices $\BX$, $\BZ$, and $\BZ-\BX$ appearing in the computation. The above arguments will be made precise in the formal proof. Exact recovery guarantee of TRGrad can be established in the following theorem.
\begin{theorem}[Main result]\label{thm:main}
There exists a numerical constant $\varepsilon_0>0$ (relying on the truncation parameters) such that if the initial guess $\BZ_0=\bz_0\bz_0^\top$ obeys
 \begin{align*}
 \|\BZ_0-\BX\|_F\leq \varepsilon_0 \|\BX\|_F,
 \numberthis\label{eq:initial_condition}
\end{align*}
then with probability exceeding\footnote{The notation $\Omega(m)$ means it is greater than $c\cdot m$ for some constant $c>0$.} $1-e^{-\Omega(m)}$ the iterates of TRGrad with a proper stepsize converge linearly to $\BX$, i.e.,
\begin{align*}
\|\BZ_{l+1}-\BX\|_F\leq \nu_g\|\BZ_l-\BX\|_F,\quad\mbox{for some }0<\nu_g<1,
\end{align*}
provided that $m\gtrsim n$. More precisely, we require that 
\begin{align*}
\varepsilon_0\leq \frac{1}{2}\min\lcb\sqrt{\frac{\rho_3}{3\lb \tau_z^4+5\tau_z^3+8\tau_z^2+2\tau_h^2\rb}},\frac{\rho_3}{15\tau_z\tau_{h,z}}\frac{1}{11}\rcb\numberthis\label{eq:basin_size2}
\end{align*}
 and 
\begin{align*}
\alpha_l\in\lsb\frac{1-\epup}{\btlow-\sqrt{\rho_4\btup}},\frac{1+\epup}{\btup+\sqrt{\rho_4\btup}}\rsb.\numberthis\label{eq:stepsize_range}
\end{align*}
Here $\rho_3>0$ is a sufficiently small constant, $\epup=\lb 1-\varepsilon_0\sqrt{1+16\varepsilon_0}\rb/\sqrt{1+16\varepsilon_0}$, and $\rho_4$, $\btlow$ and $\btup$ are defined in Theorems~\ref{thm:well_conditioned} and \ref{thm:weak_correlation}.
\end{theorem}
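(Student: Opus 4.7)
The plan is to prove the bound $\|\BZ_{l+1} - \BX\|_F \leq \nu_g\|\BZ_l - \BX\|_F$ by induction on $l$, with the base case supplied by \eqref{eq:initial_condition}. For the inductive step I would first reduce everything to the intermediate iterate $W_l := \BZ_l + \alpha_l \P_{T_l}(\BG_l)$ by invoking the best-approximation property of $\T_1$: since $\BX$ itself is rank-one and positive semidefinite, we have $\|\BZ_{l+1} - W_l\|_F \leq \|\BX - W_l\|_F$, and the triangle inequality then yields $\|\BZ_{l+1} - \BX\|_F \leq 2\|W_l - \BX\|_F$. It therefore suffices to show $\|W_l - \BX\|_F \leq (\nu_g/2)\|\bh_l\|_F$, where $\bh_l := \BX - \BZ_l$.

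A short calculation using $y_k = \langle\BX,\ba_k\ba_k^\top\rangle$ together with the idempotence of the truncation indicator in the definition of $\A_l$ yields the simple identity $\BG_l = \A_l^\top \A_l(\BX - \BZ_l)$. Since $\BZ_l \in T_l$, writing $\bh_l = \P_{T_l}(\bh_l) + \P_{T_l^\perp}(\bh_l)$ then gives the decomposition
\begin{align*}
W_l - \BX &= -\big(\mathcal{I} - \alpha_l\,\P_{T_l}\A_l^\top\A_l\P_{T_l}\big)\P_{T_l}(\bh_l) - \P_{T_l^\perp}(\bh_l) \\
&\quad + \alpha_l\,\P_{T_l}\A_l^\top\A_l\P_{T_l^\perp}(\bh_l).
\end{align*}
For the first summand, the restricted well-conditioned property (Theorem~\ref{thm:well_conditioned}) guarantees $\btlow\|\BW\|_F^2 \leq \|\A_l\BW\|_2^2 \leq \btup\|\BW\|_F^2$ for every $\BW\in T_l$, so the eigenvalues of $\P_{T_l}\A_l^\top\A_l\P_{T_l}$ restricted to $T_l$ lie in $[\btlow,\btup]$; combined with the restricted weak correlation bound from Theorem~\ref{thm:weak_correlation}, which controls $\|\P_{T_l}\A_l^\top\A_l\P_{T_l^\perp}\|$ by a factor involving $\sqrt{\rho_4\btup}$, the specific choice of $\alpha_l$ in \eqref{eq:stepsize_range} is exactly what is needed to make the combined effect a contraction on the tangent component with factor $\epup$.

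For the remaining two terms, the key observation is that $\P_{T_l^\perp}(\bh_l)$ is quadratically small in $\|\bh_l\|_F$. Since both $\BZ_l$ and $\BX$ are rank-one PSD and $\BZ_l\in T_l$, we have $\P_{T_l^\perp}(\bh_l) = \P_{T_l^\perp}(\BX) = (\BI-\bu_l\bu_l^\top)\BX(\BI-\bu_l\bu_l^\top)$, and a standard sine-of-angle comparison between the leading eigenvectors of $\BZ_l$ and $\BX$ gives $\|\P_{T_l^\perp}(\bh_l)\|_F \lesssim \|\bh_l\|_F^2/\|\BX\|_F$. Putting the three estimates together produces a bound of the form $\|W_l - \BX\|_F \leq \epup\|\P_{T_l}(\bh_l)\|_F + C(\tau)\|\bh_l\|_F^2/\|\BX\|_F$ with $C(\tau)$ depending only on the truncation parameters; the basin-size condition \eqref{eq:basin_size2} is chosen exactly so that the quadratic term is absorbed into a fraction of $\|\bh_l\|_F$, giving $\|W_l - \BX\|_F \leq (\nu_g/2)\|\bh_l\|_F$ and hence $\|\BZ_{l+1}-\BX\|_F \leq \nu_g\|\bh_l\|_F$ with some $\nu_g < 1$. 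That $\BZ_{l+1}$ also remains in the basin follows because the contraction decreases the error, closing the induction.

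The main obstacle is not this algebraic argument but the fact that $\A_l$ is itself random and varies with $\BZ_l$ through the data-dependent events $\ME_1^k(\bz_l)$ and $\ME_2^k(\bz_l)$. To apply Theorems~\ref{thm:well_conditioned} and~\ref{thm:weak_correlation} at every iterate simultaneously, the two properties must hold uniformly for all rank-one PSD $\BZ$ inside the basin on a single high-probability event; establishing this uniformity (presumably via a covering argument over the basin combined with concentration for the truncated quadratic sums) is where the bulk of the analytical work sits. Given those uniform statements, the contraction step described above closes the proof.
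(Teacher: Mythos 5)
Your architecture is the paper's: the same identity $\BG_l=\A_l^\top\A_l(\BX-\BZ_l)$, the same three-term decomposition of $\BW_l-\BX$ into a tangent-space contraction controlled by Theorem~\ref{thm:well_conditioned} (via Corollary~\ref{thm:truncated_iso}), a cross term controlled by Theorem~\ref{thm:weak_correlation}, and the quadratically small normal component $\|(\I-\P_{T_l})\BX\|_F\le\|\BZ_l-\BX\|_F^2/\|\BX\|_F$; and you correctly identify that the uniformity of the two restricted properties over the basin is exactly what those theorems supply, so no per-iterate union bound is needed.

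The one place you diverge is the retraction step, and it is lossy enough to matter for the precise statement. You pass from $\BW_l$ to $\BZ_{l+1}=\T_1(\BW_l)$ via the best-approximation triangle inequality, $\|\BZ_{l+1}-\BX\|_F\le 2\|\BW_l-\BX\|_F$, which forces you to prove $\|\BW_l-\BX\|_F\le(\nu_g/2)\|\BZ_l-\BX\|_F$, i.e.\ $\mu<1/2$. The paper instead uses Lemma~\ref{lem:W_and_Z}: since $\BZ_{l+1}=\P_{T_{l+1}}(\BW_l)$, the Pythagorean split $\|\P_{T_{l+1}}(\BW_l-\BX)\|_F^2+\|(\I-\P_{T_{l+1}})\BX\|_F^2$ (with the factor $2$ used only inside the quartic remainder) upgrades the factor $2$ to $\sqrt{1+16\mu^2\varepsilon_0^2}\approx 1$. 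That refinement is what permits $\mu$ up to $\epup=\lb 1-\varepsilon_0\sqrt{1+16\varepsilon_0}\rb/\sqrt{1+16\varepsilon_0}$, which is close to $1$ for small $\varepsilon_0$, and hence yields the full stepsize interval \eqref{eq:stepsize_range}. With your factor of $2$ you would only obtain convergence for stepsizes satisfying $\max\{|1-\alpha_l\btlow|,|1-\alpha_l\btup|\}+\alpha_l\sqrt{\rho_4\btup}+\varepsilon_0<1/2$, a strictly narrower (and, depending on $\btup/\btlow$, possibly empty) range; so the qualitative conclusion survives but the quantitative claim in \eqref{eq:stepsize_range} does not follow from your argument as written. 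Replacing the crude bound with the projection identity onto $T_{l+1}$ closes this gap.
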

In order for the closed interval in \eqref{eq:stepsize_range} to be valid, it  requires that
\begin{align*}
\btlow>\sqrt{\rho_4\btup}\quad\mbox{and}\quad(1-\epup)\btup+2\sqrt{\rho_4\btup}\leq (1+\epup)\btlow.
\end{align*}
From the expressions for the parameters, it is not hard to show that these two conditions can be met for  sufficiently large truncation parameters and sufficiently small $\rho_3$. As a simple result, we can also establish the local convergence of TRGrad with the adaptive stepsize computed via \eqref{eq:RGrad_stepsize} but with $\A$ be replaced with $\A_l$ in each iteration.
\begin{theorem}[Local convergence of TRGrad with steepest descent stepsize]\label{thm:rgd_sd_local}
Let $\BZ_0=\bz_0\bz_0^\top$ be an initial guess obeying $\|\BZ_0-\BX\|_F\leq \varepsilon_0 \|\BX\|_F$, where $\varepsilon_0>0$ satisfies \eqref{eq:basin_size2}.  Then with probability exceeding $1-e^{-\Omega(m)}$ the iterates of TRGrad  with the steepest descent stepsize converge linearly to $\BX$, i.e.,
\begin{align*}
\|\BZ_{l+1}-\BX\|_F\leq \nu_g\|\BZ_l-\BX\|_F, \quad\mbox{for some }0<\nu_g<1,
\end{align*}
 provided that $m\gtrsim n$ and
\begin{align*}
(1-\epup)\btup\leq\btlow-\sqrt{\rho_4\btup}\quad\mbox{and}\quad \btup+\sqrt{\rho_4\btup}\le(1+\epup)\btlow.\numberthis\label{eq:cond_rgd_sd}
\end{align*}
\end{theorem}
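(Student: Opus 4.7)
The plan is to reduce Theorem~\ref{thm:rgd_sd_local} to Theorem~\ref{thm:main} by showing that, on the same high-probability event on which Theorem~\ref{thm:main} is established, the adaptive steepest-descent stepsize
\begin{align*}
\alpha_l=\frac{\ln\P_{T_l}(\BG_l)\rn_F^2}{\ln\A_l(\P_{T_l}(\BG_l))\rn_2^2}
\end{align*}
automatically lies inside the admissible interval \eqref{eq:stepsize_range}. Once that containment is verified, the linear contraction $\ln\BZ_{l+1}-\BX\rn_F\le\nu_g\ln\BZ_l-\BX\rn_F$ is immediate from Theorem~\ref{thm:main}, and induction closes the loop.

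The first step is to invoke the restricted well-conditioned property (Theorem~\ref{thm:well_conditioned}). Since $\P_{T_l}(\BG_l)\in T_l$ and, inductively, $\BZ_l$ lies in the basin $\ln\BZ_l-\BX\rn_F\le\varepsilon_0\ln\BX\rn_F$, this property yields, with probability $1-e^{-\Omega(m)}$,
\begin{align*}
\btlow\,\ln\P_{T_l}(\BG_l)\rn_F^2\;\le\;\ln\A_l(\P_{T_l}(\BG_l))\rn_2^2\;\le\;\btup\,\ln\P_{T_l}(\BG_l)\rn_F^2.
\end{align*}
Plugging into the definition of $\alpha_l$ gives the deterministic sandwich
\begin{align*}
\frac{1}{\btup}\;\le\;\alpha_l\;\le\;\frac{1}{\btlow}.
\end{align*}

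Next I would check that the two inequalities in \eqref{eq:cond_rgd_sd} are exactly what is required to place the interval $[1/\btup,\,1/\btlow]$ inside the interval \eqref{eq:stepsize_range} of Theorem~\ref{thm:main}. A direct cross-multiplication shows that
\begin{align*}
\frac{1-\epup}{\btlow-\sqrt{\rho_4\btup}}\;\le\;\frac{1}{\btup}\;\iff\;(1-\epup)\btup\;\le\;\btlow-\sqrt{\rho_4\btup},
\end{align*}
and
\begin{align*}
\frac{1}{\btlow}\;\le\;\frac{1+\epup}{\btup+\sqrt{\rho_4\btup}}\;\iff\;\btup+\sqrt{\rho_4\btup}\;\le\;(1+\epup)\btlow,
\end{align*}
which are precisely the hypotheses stated in \eqref{eq:cond_rgd_sd} (and they also ensure $\btlow>\sqrt{\rho_4\btup}$, so the denominators are positive). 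Thus $\alpha_l$ is admissible for Theorem~\ref{thm:main}, and a single iteration of contraction follows.

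The only (mild) obstacle is ensuring that $\BZ_l$ stays in the basin across all iterations so that the well-conditioned and weak-correlation estimates remain applicable. This is automatic: the factor $\nu_g<1$ propagates $\ln\BZ_{l+1}-\BX\rn_F\le\varepsilon_0\ln\BX\rn_F$, and the high-probability statements in Theorems~\ref{thm:well_conditioned} and \ref{thm:weak_correlation} already hold uniformly over this neighborhood, so no additional union bound or concentration argument beyond what is already used for Theorem~\ref{thm:main} is required.
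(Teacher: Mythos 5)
Your proposal is correct and follows essentially the same route as the paper: both derive the sandwich $\frac{1}{\btup}\leq\alpha_l\leq\frac{1}{\btlow}$ from Theorem~\ref{thm:well_conditioned} and then observe that the hypotheses \eqref{eq:cond_rgd_sd} place this interval inside the admissible stepsize range \eqref{eq:stepsize_range} of Theorem~\ref{thm:main}. Your added remarks on the cross-multiplication equivalences and on the induction keeping $\BZ_l$ in the basin merely make explicit what the paper leaves implicit.
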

Once again, the conditions in \eqref{eq:cond_rgd_sd} can be satisfied for  sufficiently large truncation parameters and sufficiently small $\rho_3$.
The proofs of Theorems~\ref{thm:main} and \ref{thm:rgd_sd_local} are presented in Section~\ref{sec:proofs1}. A few remarks are in order: 
\begin{itemize}
\item The convergence rate $v_g$ is independent of the ground truth $\bx$ (or $\BX$) and its length $n$, but relies on the truncation parameters. 
\item By Proposition~C.1 in \cite{CC:CPAM:17} (also see Proposition B.1 in \cite{phase_kacz03}), one can use the truncated spectral method to construct an initial vector $\bz_0$ such that for any fixed $\epsilon>0$,
$
\|\bz_0-\bx\|\leq \epsilon\|\bx\|
$ holds with high probability  provided $m\gtrsim \epsilon^{-2}\cdot n$. Letting $\BZ_0=\bz_0\bz_0^\top$, on the same event, we have 
\begin{align*}
\|\BZ_0-\BX\|_F\leq(\|\bz_0\|+\|\bx\|)\|\bz_0-\bx\|\leq (2+\epsilon)\epsilon\|\bx\|^2=(2+\epsilon)\epsilon\|\BX\|_F.
\end{align*}
Thus, the initial condition in \eqref{eq:initial_condition} can be achieved with optimal sampling complexity. It follows that, seeded with this initialization, TRGrad converges linearly to the ground truth target matrix with high probability provided $m\gtrsim \epsilon^{-2}\log\epsilon^{-1}\cdot n$.
\item The proof strategy for Theorem~\ref{thm:main} is substantially different from the one in \cite{CC:CPAM:17}, though the truncated rules are partially similar. In \cite{CC:CPAM:17}, a local regularity condition of the objective function was established in the vector domain. In contrast, the proof of Theorem~\ref{thm:main} relies on the well conditioned property  and the weak correlation property of the truncated sensing operator $\A_{\bz}$ when being restricted onto low dimensional subspaces; see Theorem~\ref{thm:well_conditioned} and \ref{thm:weak_correlation}. 
\item Though TRGrad is motivated from the perspective of theoretical analysis, numerical simulations in Section~\ref{sec:numerics} suggest that TRGrad is able to avoid overshooting more effectively than RGrad when using a constant stepsize; see Figure~\ref{fig1}.
\end{itemize}
\subsection{Efficient implementations of RGrad and TRGrad}\label{sec:implementation}
since the estimate $\BZ_l$ in RGrad and TRGrad is a rank-$1$ positive semidefinite matrix, we can parameterize it by its eigenvalue decomposition $\BZ_l=\sigma_l\bu_l\bu_l^\top$. Thus, it suffices to update $\sigma_l$ and $\bu_l$
in each iteration. First, it follows from \eqref{eq:projT} that 
\begin{align*}
\BZ_l+\alpha_l\P_{T_l}(\BG_l) & = \sigma_l\bu_l\bu_l^\top+\alpha_l
\lb \bu_l\bu_l^\top\BG_l+\BG_l\bu_l\bu_l^\top-\bu_l\bu_l^\top\BG_l\bu_l\bu_l^\top\rb\\
& = \sigma_l\bu_l\bu_l^\top+\alpha_l\lb\bu_l\bu_l^\top\BG_l\bu_l\bu_l^\top+\lb\BI-\bu_l\bu_l^\top\rb\BG_l\bu_l\bu_l^\top+\bu_l\bu_l^\top\BG_l\lb\BI-\bu_l\bu_l^\top\rb\rb\\
&=\sigma_l\bu_l\bu_l^\top+\alpha_l\lb c_l\bu_l\bu_l^\top+s_l\bv_l\bu_l^\top+s_l\bu_l\bv_l^\top\rb\\
&=\begin{bmatrix}
\bu_l & \bv_l
\end{bmatrix}
\begin{bmatrix}
\sigma_l+c_l & \alpha_ls_l\\
\alpha_ls_l & 0
\end{bmatrix}
\begin{bmatrix}
\bu_l^\top\\
\bv_l^\top
\end{bmatrix},
\end{align*}
where we have used the substitutions $c_l=\bu_l\BG_l\bu_l$ and $\bv_ls_l = \lb\BI-\bu_l\bu_l^\top\rb\BG_l\bu_l$ for $\|\bv_l\|=1$  and $s_l=\|\lb\BI-\bu_l\bu_l^\top\rb\BG_l\bu_l\|$.  Let 
\begin{align*}
\begin{bmatrix}
\sigma_l+c_l & \alpha_ls_l\\
\alpha_ls_l & 0
\end{bmatrix}=\begin{bmatrix}
\bq_1 & \bq_2
\end{bmatrix}
\begin{bmatrix}
\lambda_1 & \\
& \lambda_2
\end{bmatrix}
\begin{bmatrix}
\bq_1^\top\\
\bq_2^\top
\end{bmatrix}
\end{align*}
be the eigenvalue decomposition which can be computed using $O(1)$ flops.
As discussed previously, we must have $\lambda_1\geq 0$ and $\lambda_2\leq 0$, or vice versa. 
Without loss of generality, we can assume $\lambda_1\geq 0$ and $\lambda_2\leq 0$. Then, $\sigma_{l+1}$ and $\bu_{l+1}$ can be updated  by 
\begin{align*}
\sigma_{l+1} = \lambda_1\quad\mbox{and}\quad \bu_{l+1}=\begin{bmatrix}
\bu_l & \bv_l
\end{bmatrix}\bq_1.
\end{align*}

Suppose $\BG_l\bu_l$ is already computed. From the above discussion, it can be easily seen that we can obtain $\sigma_{l+1}$ and $\bu_{l+1}$ using additional $O(n)$ flops.
Hence it only remains to see how to compute $\BG_l\bu_l$.  For RGrad, a simple algebra yields that 
\begin{align*}
\BG_l\bu_l &= \lb\sum_{k=1}^m\ba_k\ba_k^\top\lb y_k-\langle\ba_k\ba_k^\top,\sigma_l\bu_l\bu_l^\top\rangle\rb\rb\bu_l\\
&=\sum_{k=1}^m(\ba_k^\top\bu_l)\lb y_k-\sigma_l|\ba_k^\top\bu_l|^2\rb\ba_k\\
&=\BA^\top\lb (\BA\bu_l)\odot(\by-\sigma_l|\BA\bu_l|^2)\rb,
\end{align*}
where $\odot$ denotes the Hadamard product. Thus, the dominant cost in the computation of $\BG_l\bu_l$ lies in the two matrix-vector products involving $\BA$ and $\BA^\top$ which is $4mn$. For TGRad, $\BG_l\bu_l$ can be computed similarly but with the measurements that are not accepted by the truncation rules being removed. Note that the computational cost to set up the truncation rules is negligible since we can compute $\BA\bz_l$ via $\BA\bz_l=\sqrt{\sigma_l}\BA\bu_l$. In summary, the leading order per iteration cost of RGrad and TGRad is $4mn$ for the computation of  two matrix-vector products involving $\BA$ and $\BA^\top$.

\section{Numerical results}\label{sec:numerics}
In this section we evaluate the empirical performance of RGrad and TRGrad, and compare them with two state-of-the-art first order methods: TWF \cite{CC:CPAM:17} and TAF \cite{WGE:TIT:18}, which are downloaded from the authors' website.  All the algorithms are seeded with an initial guess constructed by the truncated spectral method \cite{CC:CPAM:17}.
The experiments are executed from Matlab 2017b.
\subsection{Empirical phase transitions}
Here we investigate the recovery ability of the aforementioned algorithms on reconstructing signals of length $n=128$. The signals are generated to have i.i.d Gaussian entries, i.e., $\bx\sim\N(0,\BI_n)$ in the real case and $\bx\sim\N(0,\BI_n)+i\cdot\N(0,\BI_n)$ in the complex case.
Two measurement models are considered: 
\begin{itemize}
\item Gaussian measurement model where $\BA$ has i.i.d Gaussian entries, either real or complex up to whether $\bx$ is real or complex;
\item CDP measurement model \cite{CLS:TIT:15} where $\BA\bx$ is given by 
\begin{align*}
\BA\bx = \begin{bmatrix}
\BF(\bd_1\odot\bx)\\
\vdots\\
\BF(\bd_L\odot\bx)
\end{bmatrix}.\numberthis\label{eq:CDP}
\end{align*}
Here $\BF$ stands for the DFT matrix, and each entry of $\bd_\ell$ ($\ell=1,\cdots,L$) is sampled from $\{1,-1,i,-i\}$ with equal probability. For the CDP model, we also test of the reconstruction of 2D signals whether both $\bx$ and $\bd_\ell$ are matrices of size $128\times 128$, with the 2D DFT being used in \eqref{eq:CDP}.
\end{itemize}
We consider an algorithm to have successfully reconstructed a test signal $\bx$ if 
it returns an estimate $\bx_l$ such that 
$
\dist(\bx_l,\bx)/\|\bx\|\leq 10^{-3},
$
where 
\begin{align*}
\dist(\bx_l,\bx) = \min_{\phi\in[0,2\pi)}\|\bx_l-\bx e^{i\phi}\|.
\end{align*}
Notice that from the efficient implement of RGrad and TRGrad, the estimate $\bx_l$ can be easily formed as $\bx_l=\sqrt{\sigma_l}\bu_l$; see Section~\ref{sec:implementation}.
For each type of measurement models, tests are conducted for $m$ increasing from small value to a sufficiently large one.  For each fixed pair of $(n,m)$, $100$ random simulations are repeated.
Then we calculate the probability of successful recovery out of the $100$ random tests.

We first compare RGrad and TRGrad with a constant stepsize $\alpha_l=0.2$ as well as the adaptive steepest descent stepsize given in \eqref{eq:RGrad_stepsize}, which are labelled (C) and (A) respectively.
The plots of successful recovery probability against the oversampling ratio $m/n$
are presented in Figure~\ref{fig1}. On one hand, RGrad and TRGrad  exhibit similar recovery performance when using adaptive steepest descent stepsize. 
On the other hand, TRGrad has improved performance over RGrad for the Gaussian real case and especially the CDP 2D case when both algorithms use the constant stepsize $\alpha=0.2$. This suggests that truncation is helpful in avoiding overshooting to allow medium large constant stepsizes.

\begin{figure}[ht!]
\centering
	\includegraphics[width=0.4 \textwidth]{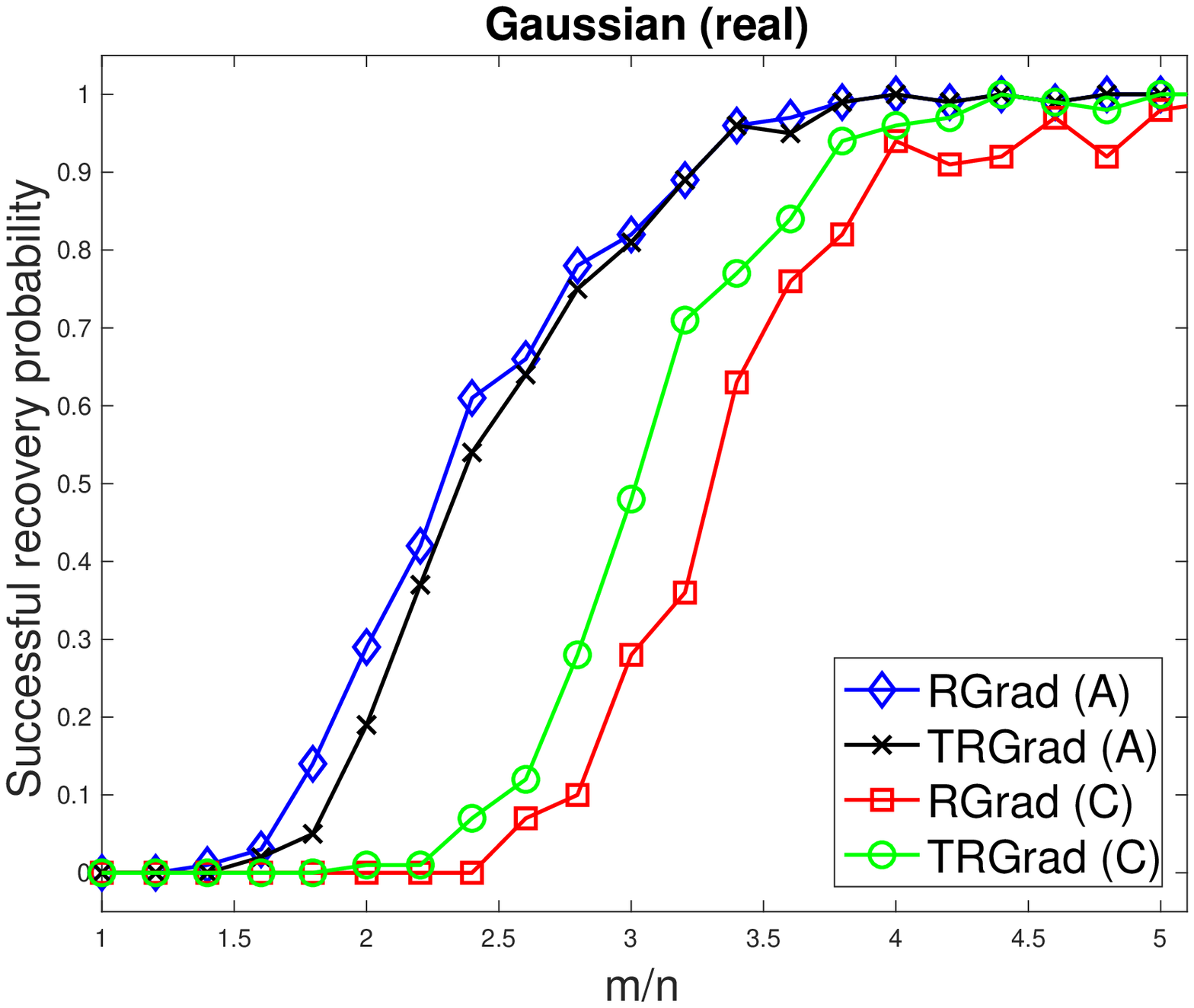}\hspace{0.5cm}
	\includegraphics[width=0.4 \textwidth]{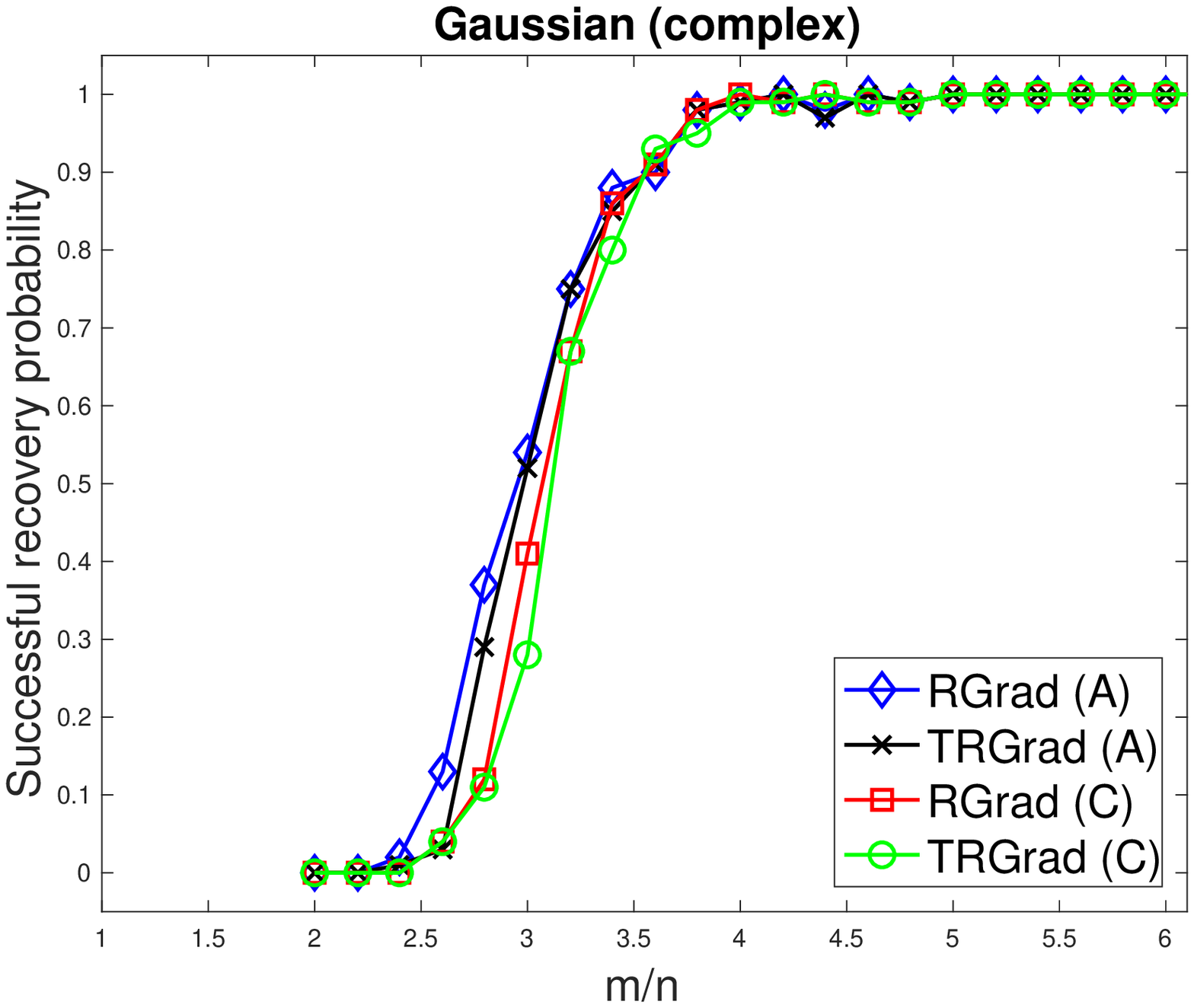}\\
	\vspace{0.3cm}
	\includegraphics[width=0.4 \textwidth]{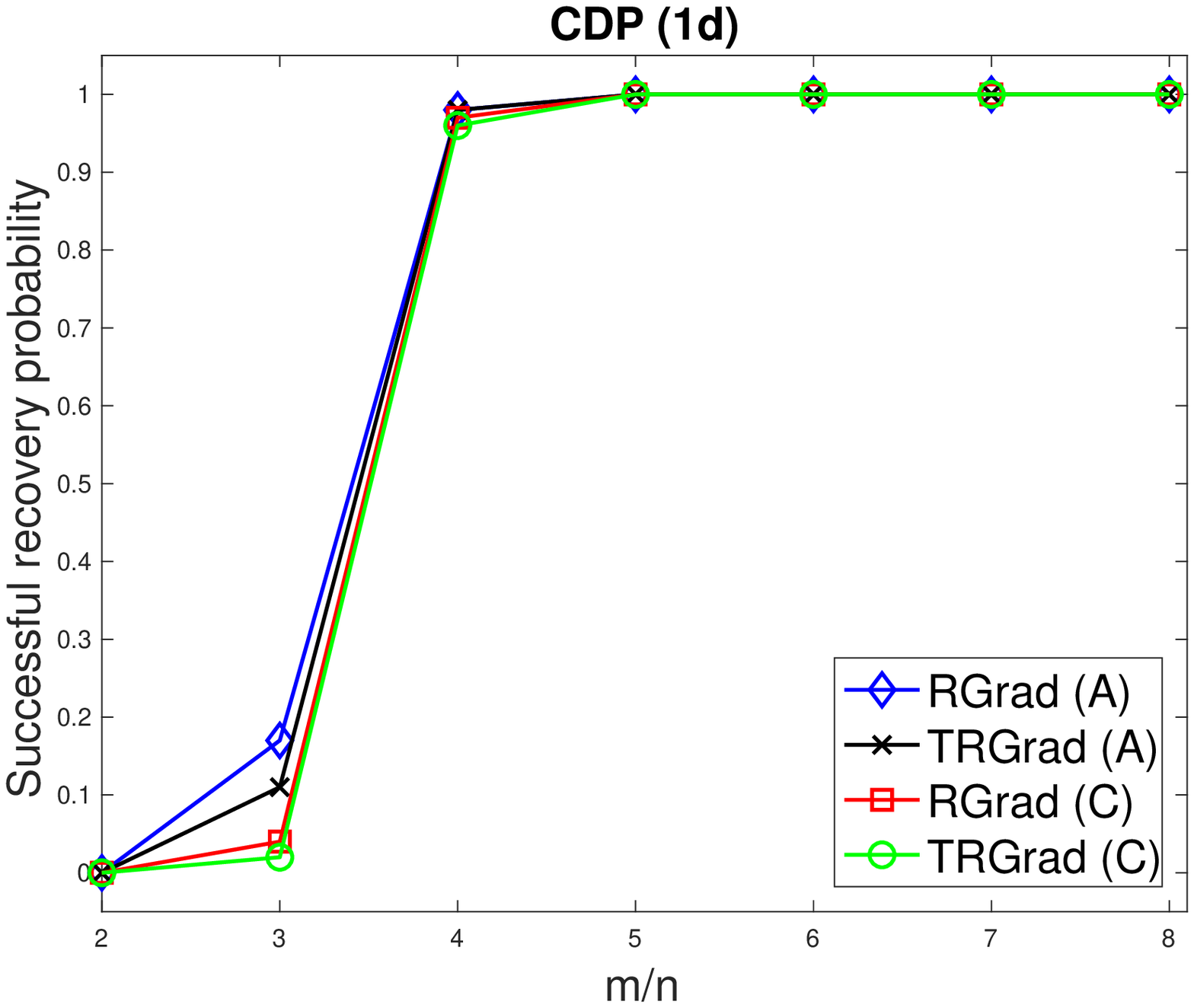}\hspace{0.5cm}
	\includegraphics[width=0.4 \textwidth]{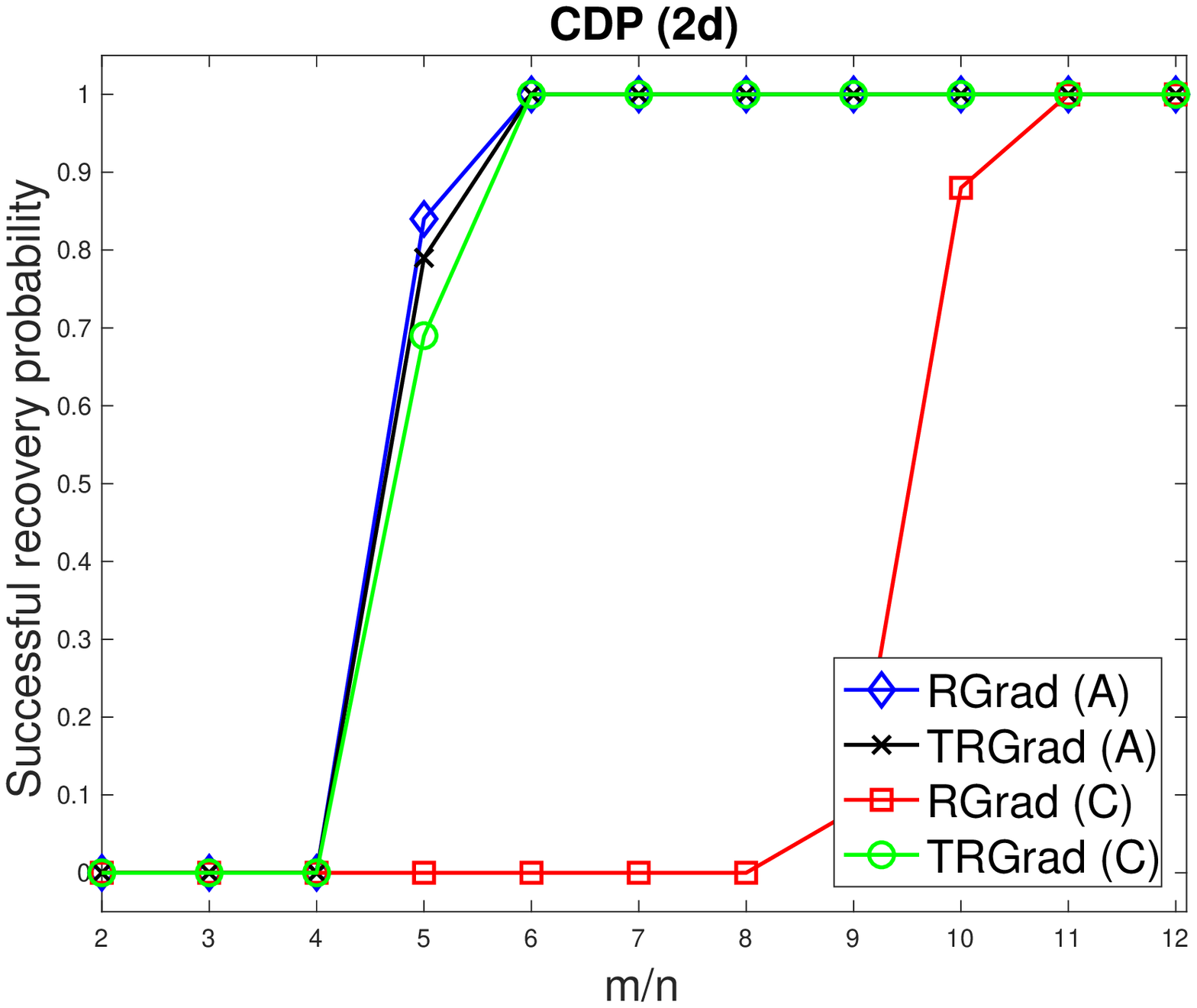}
\caption{Comparison of RGrad and TGRad for different measurement models.}
\label{fig1}
\end{figure}

\begin{figure}[ht!]
\centering
	\includegraphics[width=0.4 \textwidth]{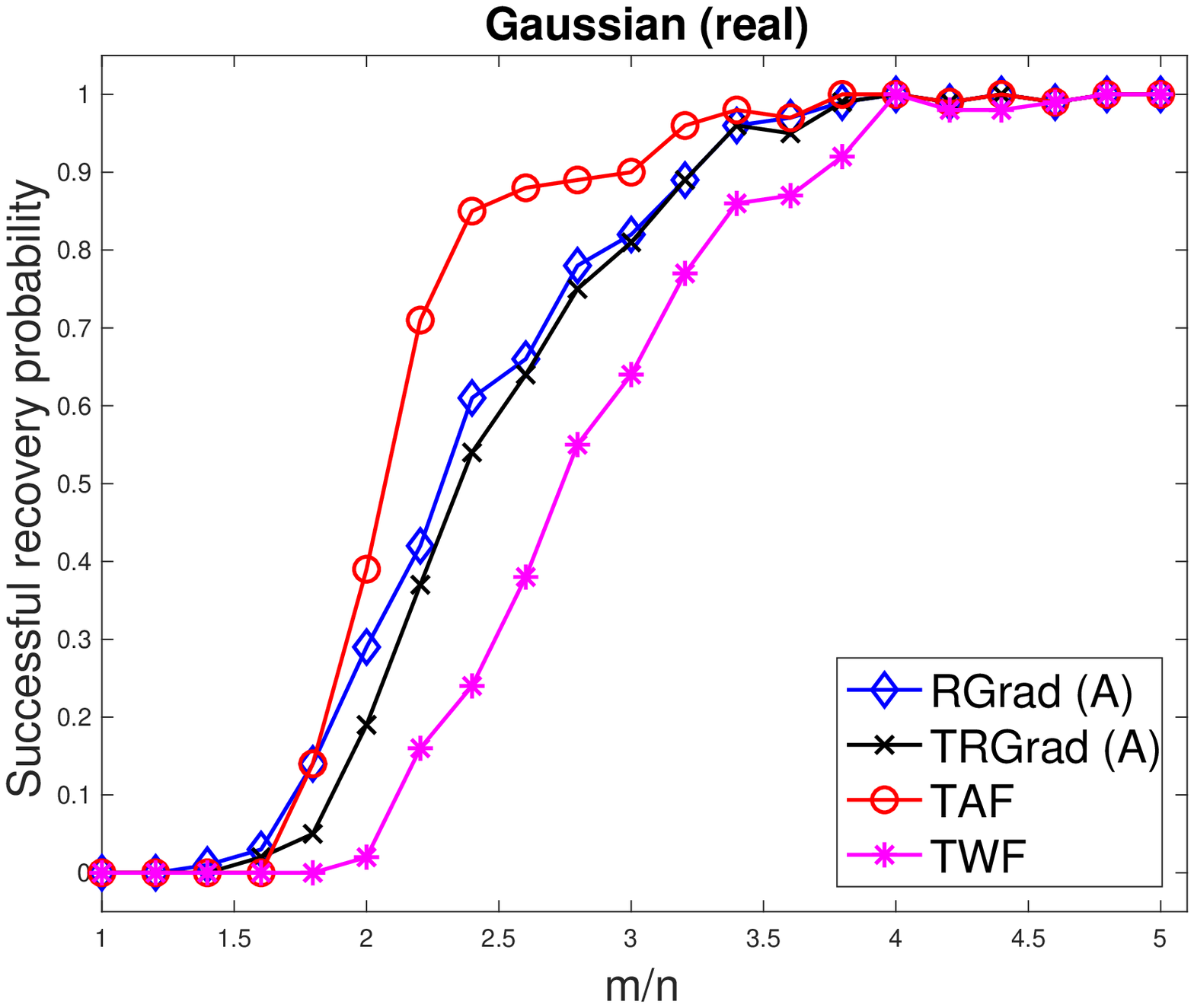}\hspace{0.5cm}
	\includegraphics[width=0.4 \textwidth]{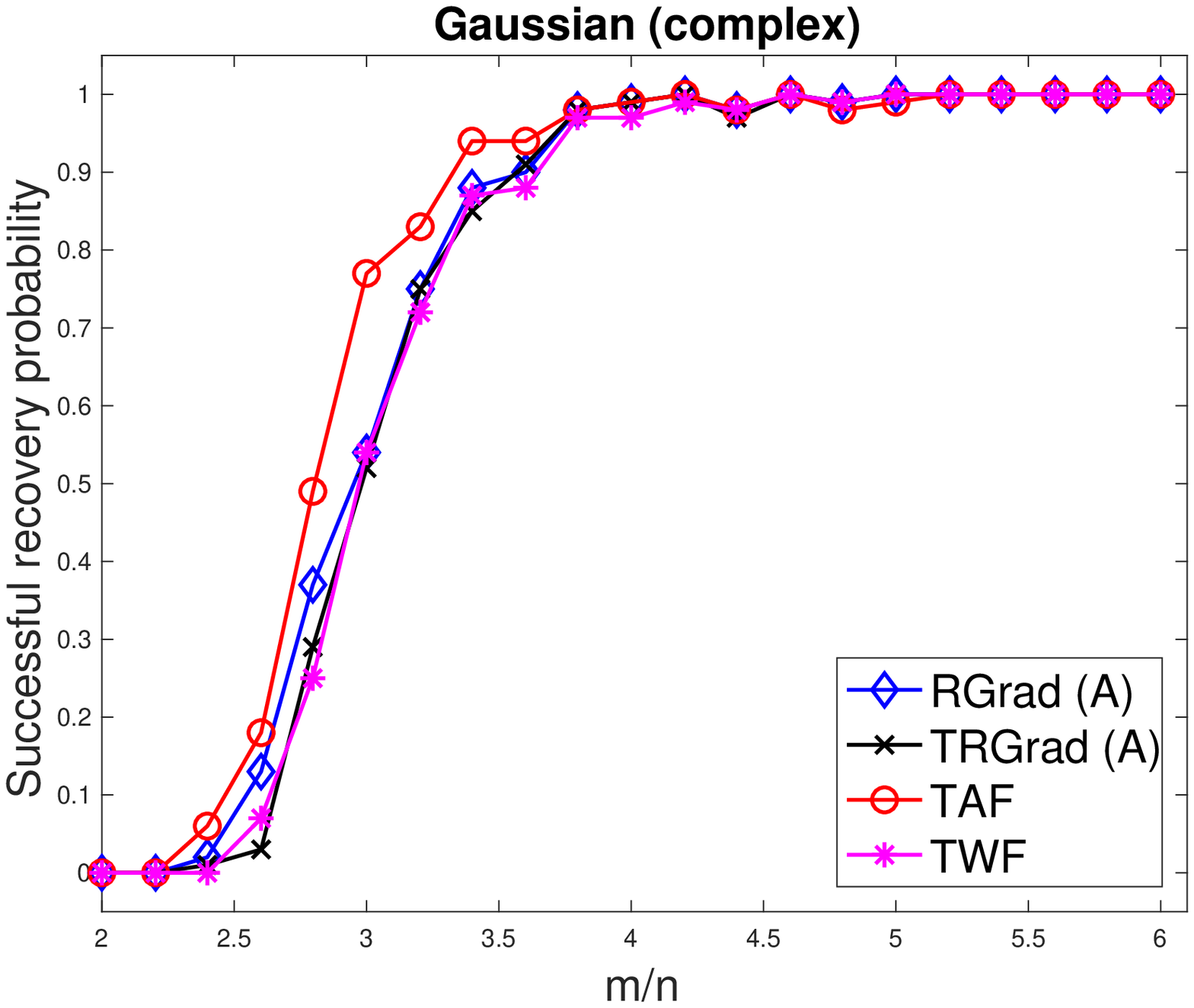}\\
	\vspace{0.3cm}
	\includegraphics[width=0.4 \textwidth]{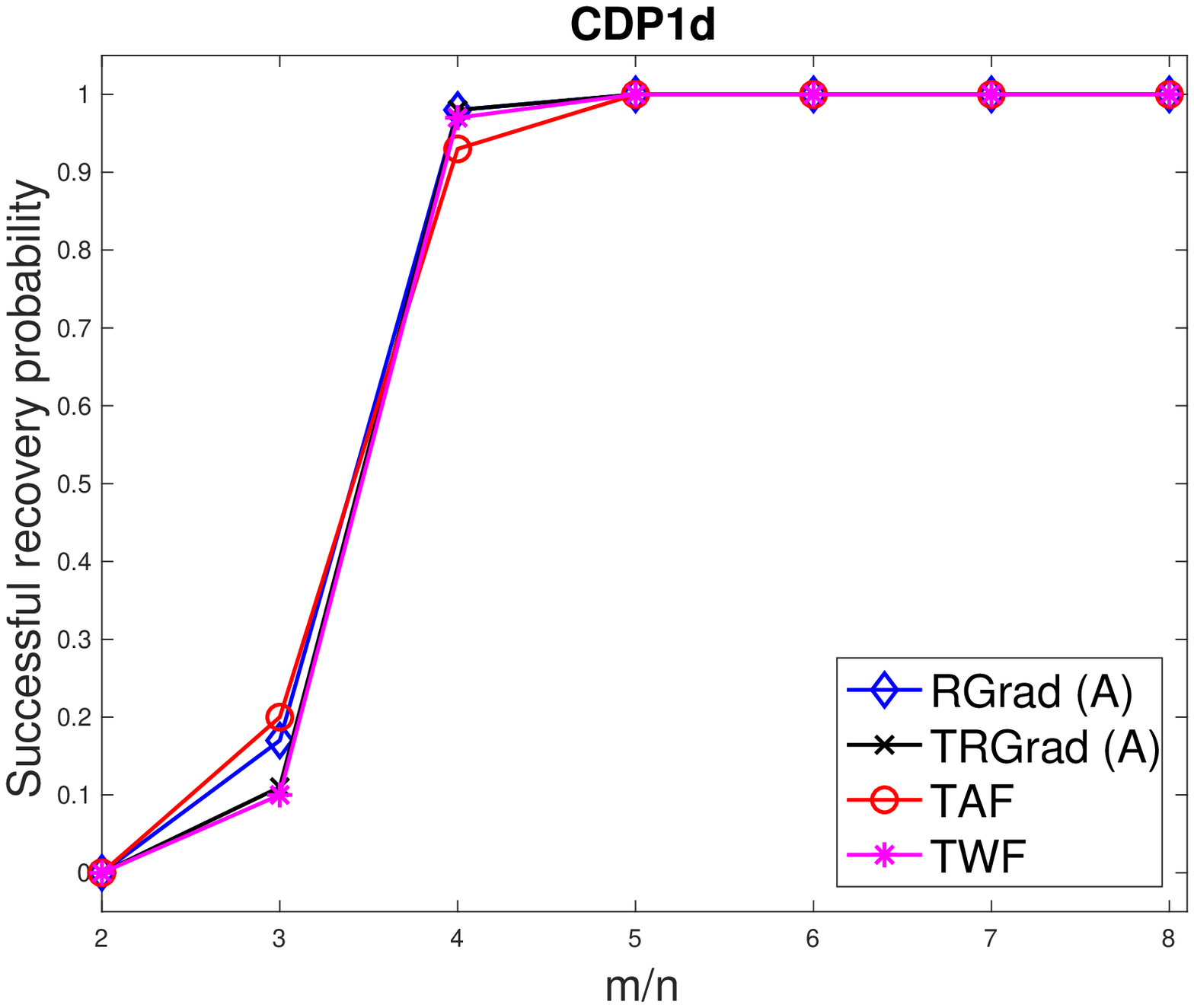}\hspace{0.5cm}
	\includegraphics[width=0.4 \textwidth]{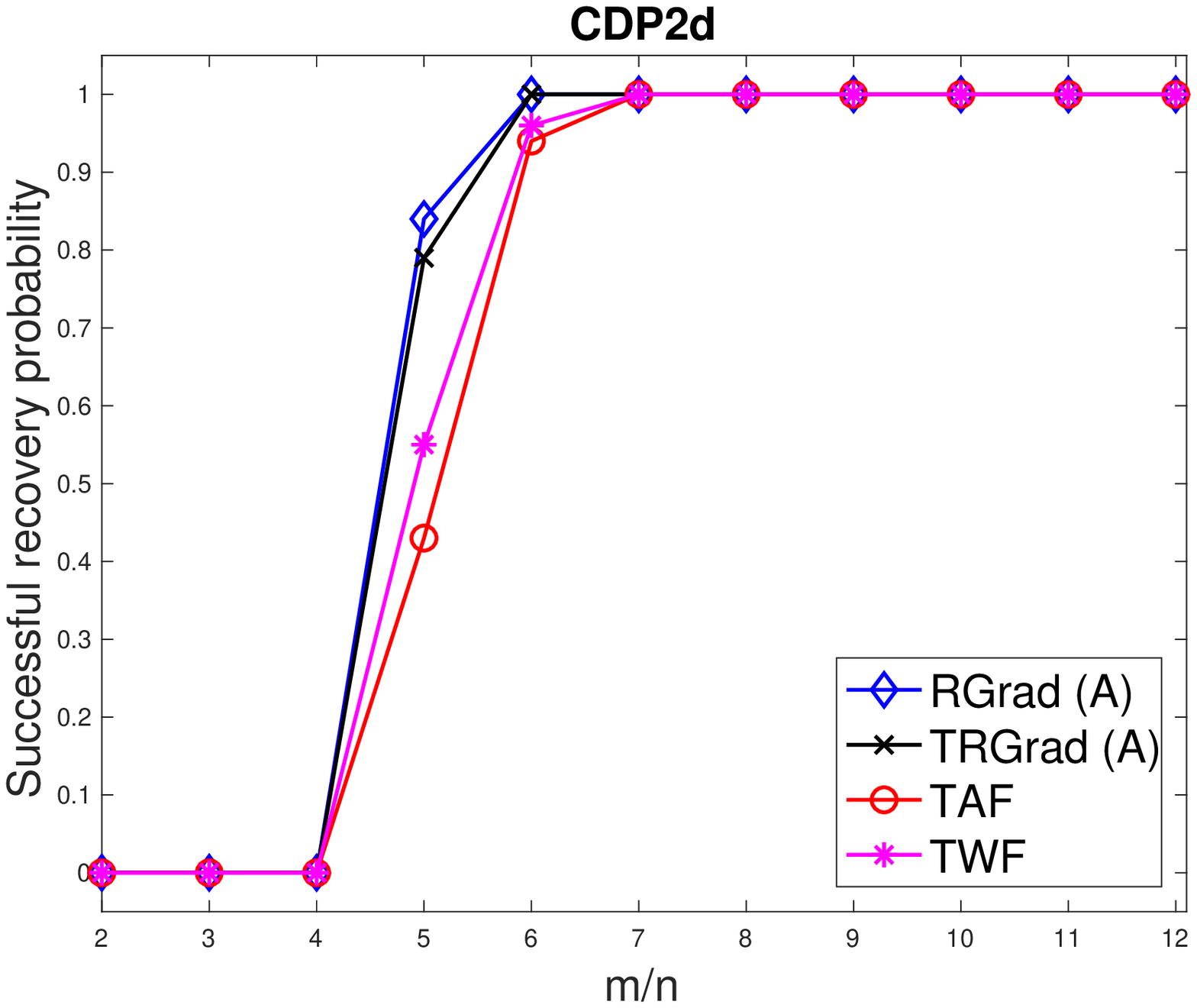}
\caption{Comparison of RGrad and TGRad for different measurement models.}
\label{fig2}
\end{figure}

Next  we compare recovery performance of RGrad, TRGrad, TAF and TWF; see Figure~\ref{fig2}. For clarity, only results for RGrad and TRGrad with adaptive steepest stepsize are presented. The figure shows that in the small oversampling region TAF has higher phase transition for the Gaussian case while RGrad and TRGrad have higher phase transition for the CDP 2D case. 

\subsection{Computational time and stability }
\begin{figure}[ht!]
\centering
	\includegraphics[width=0.45\textwidth]{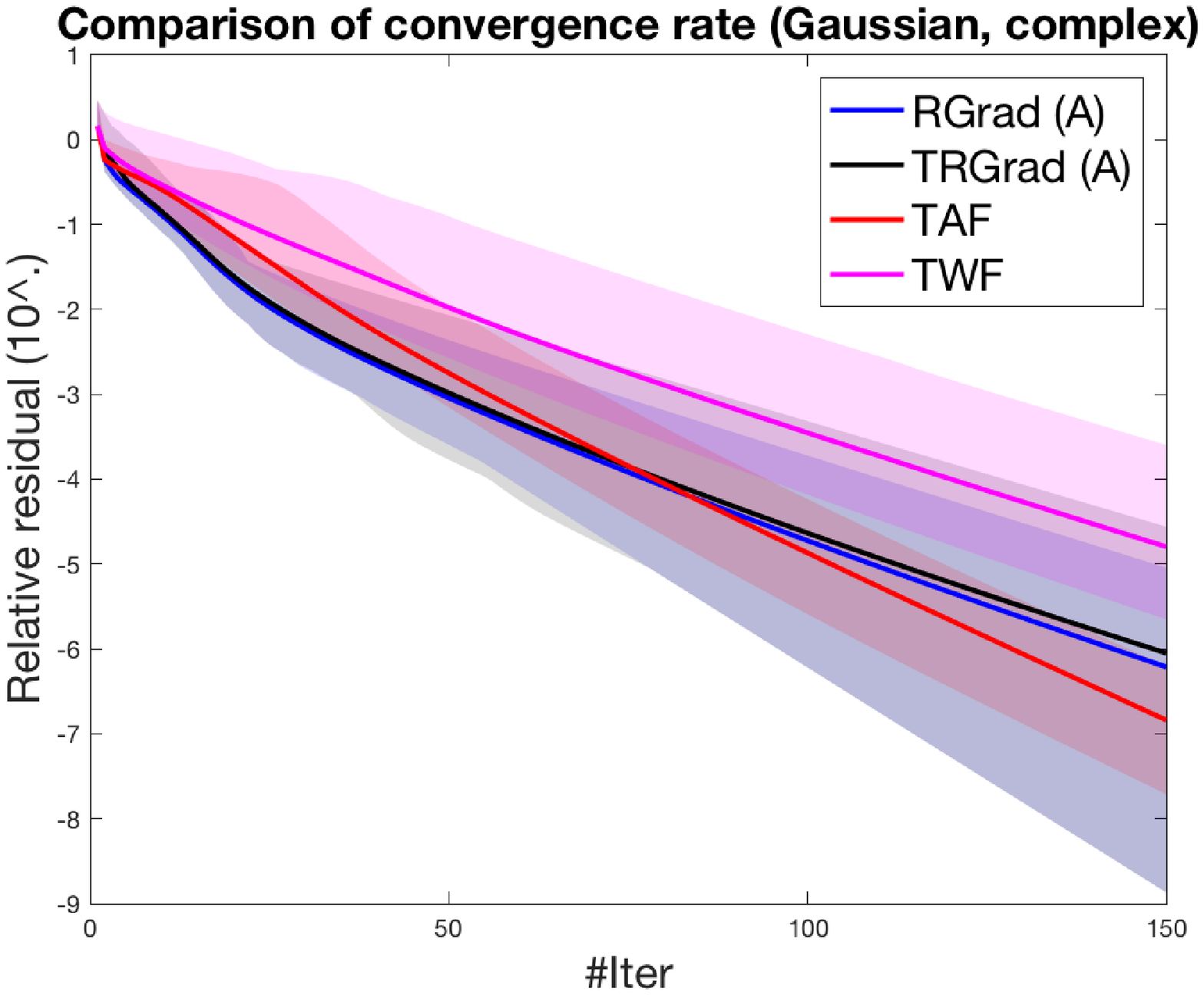}
	\includegraphics[width=0.45\textwidth]{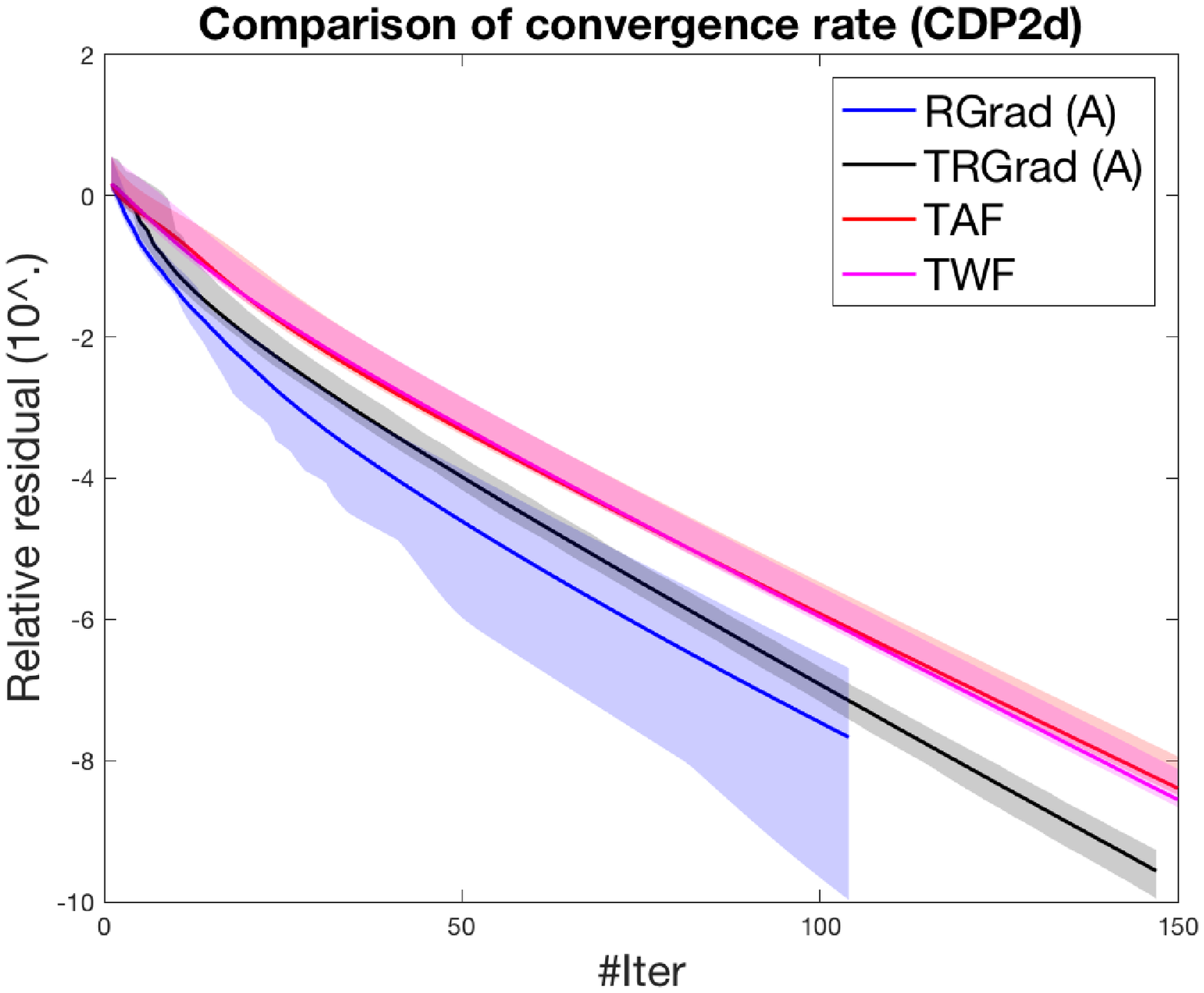}
\caption{Range and average of the relative residuals over $100$ random tests.}
\label{fig3}
\end{figure}

The dominant per iteration computational costs of all the four  test algorithms  lie in  the two matrix-vector products involving $\BA$ and $\BA^\top$. To investigate their computational efficiency, we consider the average convergence rates of the algorithms. Tests are conducted for the complex Gaussian case with $n/m=6$ and the CDP 2D case with $n/m=8$. The range and average of the relative residuals measured by $\||\BA\bx_k|^2-\by\|/\|\by\|$ over $100$ random simulations against the number of iterations are presented in Figure~\ref{fig3}. For the complex Gaussian case, TAF exhibits an overall superior performance while RGrad and TRGrad have a faster convergence rate for the CDP 2D case.

\begin{figure}[ht!]
\centering
	\includegraphics[width=0.4\textwidth]{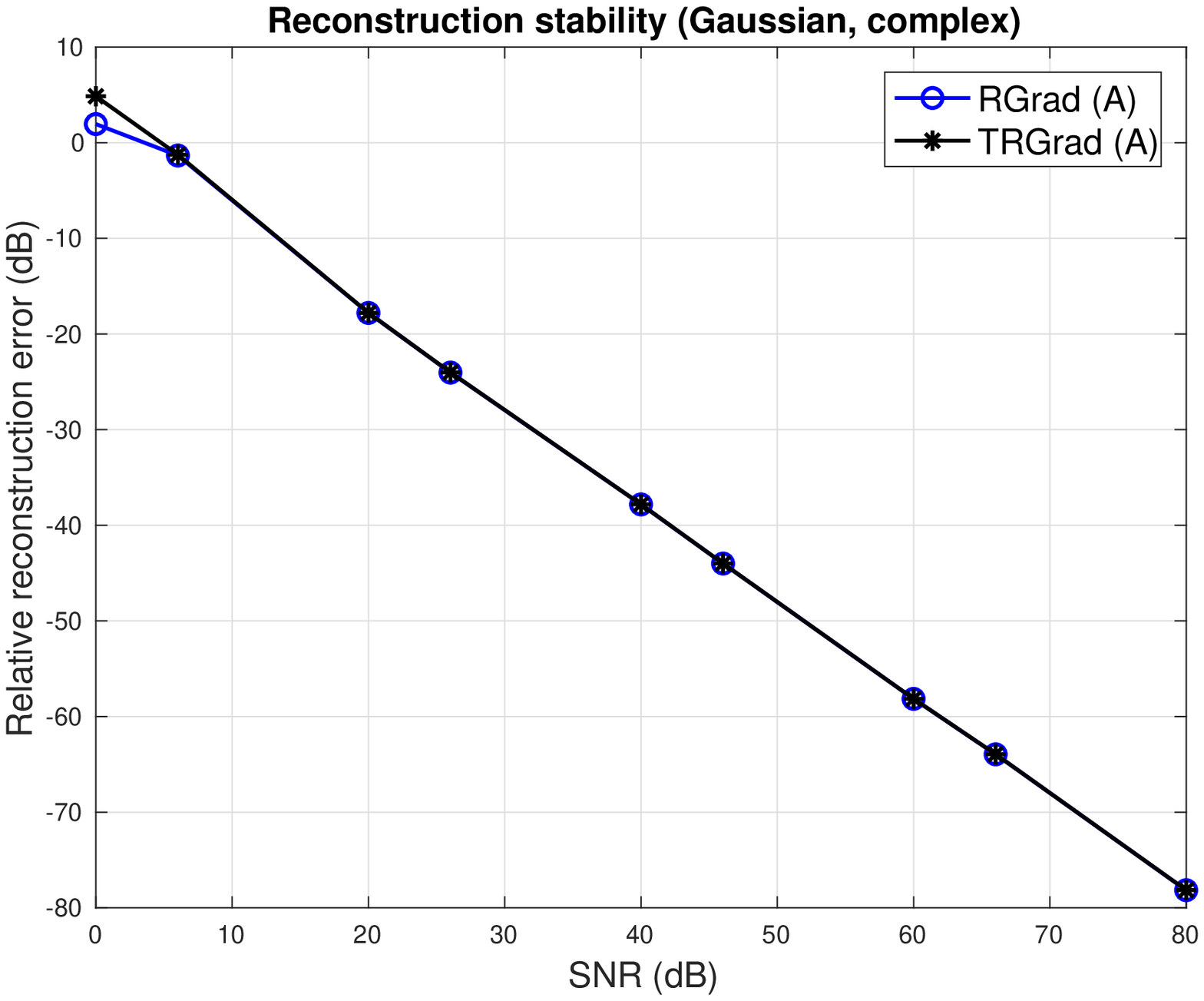}\hspace{0.5cm}
	\includegraphics[width=0.4\textwidth]{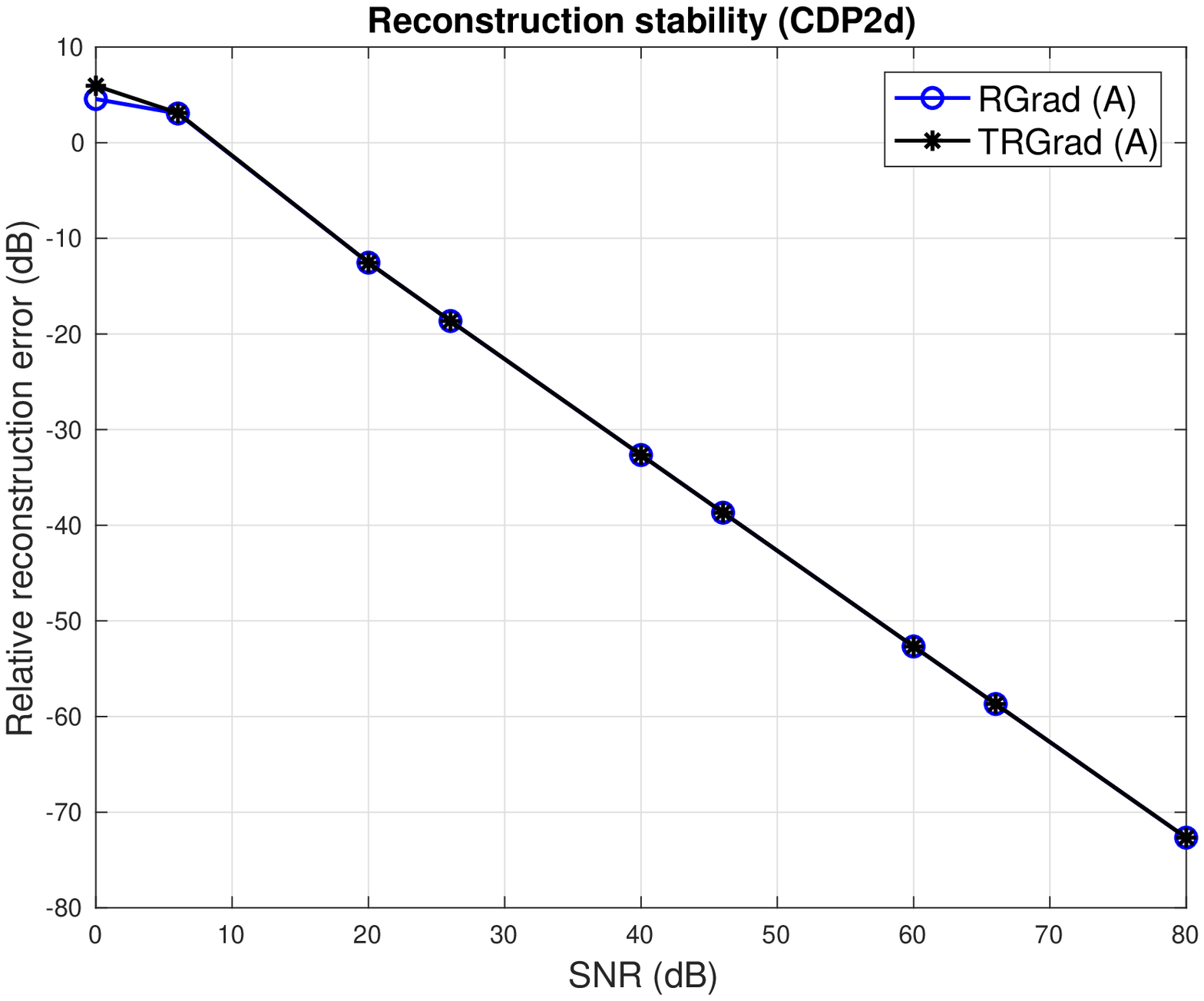}
\caption{Performance of RGrad and TGRad under different SNR.}
\label{fig4}
\end{figure}

We demonstrate the performance of RGrad and TRGrad under additive noise by conducting tests with the measurement measure $\by$ corrupted by 
\begin{align*}
\be = \sigma\cdot \|\by\|\cdot\frac{\bw}{\|\bw\|},
\end{align*}
where $\bw$ is a standard Gaussian random vector and $\sigma$ is the noise level.  As above, tests are conducted for the complex Gaussian and CDP 2D cases  with $9$ different values of $\sigma$, corresponding to $9$ different equispaced signal-to-noise ratios (SNRs). The average relative reconstruction error in dB plotted against the SNR is presented in Figure~\ref{fig4}.
The desirable linear scaling between the noise levels and the relative reconstruction
errors can be observed from the figure  for both RGrad and TRGrad.
\section{Proofs of main theorems}\label{sec:proofs1}
Recall that $\A_{\bz}$ is a truncated linear operator defined in \eqref{eq:Az}. Let $\BZ=\bz\bz^\top$ be a rank-$1$ and positive semidefinite matrix. The tangent space of the embedded manifold of rank-$1$ and positive semidefinite matrices at $\BZ$, denoted $T_{\bz}$, is given by (see also \eqref{eq:tangent})
\begin{align*}
T_{\bz}=\{\bz\bw^\top+\bw\bz^\top~|~\bw\in\R^n\}.
\end{align*}
The proof the main theorem relies on the local well conditioned property of $\A_{\bz}$ when being restricted onto the tangent space $T_{\bz}$, and the local weak correlation property of $\A_{\bz}$ between its restrictions onto $T_{\bz}$ and the complementary of $T_{\bz}$. 

\begin{theorem}[Restricted Well Conditioned]\label{thm:well_conditioned}
Let 
$ \vartheta_1:=\mean{|\xi|^4\onesub{|\xi|\leq \gamma}}-\mean{|\xi|^2\onesub{|\xi|\leq \gamma}}$ and $ \vartheta_2:=\mean{|\xi|^2\onesub{|\xi|\leq \gamma}}$ for $\xi$ being a standard normal distribution.
With probability exceeding $1-e^{-\Omega(m)}$, 
\begin{align*}
\btlow\ln\BW\rn^2_F\leq\frac{1}{m}\ln\A_{\bz}\lb\BW\rb\rn^2\leq \btup\ln\BW\rn_F^2
\end{align*}
holds uniformly { for all $\bz\bz^\top$ obeying $\|\bz\bz^\top-\bx\bx^\top\|_F\leq\frac{1}{13}\|\BX\|_F$} and all $\BW\in T_{\bz}$ provided $m\gtrsim n$. Here $$\btlow=2\min\lcb \vartheta_1, \vartheta_2-\rho_1-\rho_2\rcb\quad\mbox{and}\quad\btup=\max\lcb \vartheta_1+ \vartheta_2+\rho_1,2\lb \vartheta_2+\rho_1\rb\rcb$$ with $\rho_1=10\tau_z^3e^{-0.49\tau_z^2}+o(1)$ and $\rho_2=6\tau_z^2\tau_he^{-0.64\tau_h^2}+4\tau_z^2\tau_xe^{-0.39\tau_x^2}+o(1)$.
\end{theorem}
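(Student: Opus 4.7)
The plan proceeds in three stages: compute the per-summand expectation, establish pointwise Bernstein concentration for the empirical average, and upgrade to uniformity in $(\bz,\BW)$ via an $\epsilon$-net argument. To set up the calculation I would parameterize any $\BW\in T_{\bz}$ as $\BW = 2a\bu\bu^\top + b(\bu\bv^\top+\bv\bu^\top)$, where $\bu:=\bz/\|\bz\|$, $\bv$ is a unit vector in $\bu^\perp$ and $(a,b)\in\R^2$, so that $\|\BW\|_F^2 = 4a^2+2b^2$. Setting $\eta_k:=\ba_k^\top\bu$ and $\zeta_k:=\ba_k^\top\bv$,
\begin{align*}
\langle\BW,\ba_k\ba_k^\top\rangle^2 = 4a^2\eta_k^4 + 8ab\eta_k^3\zeta_k + 4b^2\eta_k^2\zeta_k^2.
\end{align*}

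For the expectation of a single truncated summand, observe that the three indicators $\ME_1^k(\bx),\ME_1^k(\bz),\ME_2^k(\bz)$ depend on $\ba_k$ only through its projections onto $\bu$ and $\bu_\bx:=\bx/\|\bx\|$, once the averaged quantities $\|\by\|_1/m$ and $\|\by-\A(\bz\bz^\top)\|_1/m$ are frozen to their expectations (a separate exponential concentration step that does not affect sampling complexity). Splitting $\bv$ into a component in $\mathrm{span}(\bu,\bu_\bx)\cap\bu^\perp$ plus an orthogonal component, the orthogonal component is independent of the truncation and integrates against odd powers of its corresponding Gaussian to zero, so the cross term $\E[\eta_k^3\zeta_k\dsone_{\mathrm{trunc}}]$ is of size $O(\|\bz-\bx\|/\|\bx\|)\,\|\BW\|_F^2$ and absorbed as a lower-order error on the given ball. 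The leading contribution is then $4a^2\,\E[\eta_k^4\dsone_{\mathrm{trunc}}] + 4b^2\,\E[\eta_k^2\dsone_{\mathrm{trunc}}]$. With only $\ME_1^k(\bz)$ present these equal exactly $4a^2(\vartheta_1+\vartheta_2)$ and $4b^2\vartheta_2$; the additional exclusions from $\ME_1^k(\bx)$ and $\ME_2^k(\bz)$ remove small sets whose mass is controlled by standard Gaussian tail bounds, producing the explicit $\rho_1,\rho_2$ correction terms. Taking extremes of the resulting ratio to $\|\BW\|_F^2$ over $(a,b)$ yields the interval $[\btlow,\btup]$ stated in the theorem.

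For pointwise concentration with $(\bz,\BW)$ fixed, the indicator $\dsone_{\ME_1^k(\bz)}$ forces $|\eta_k|\leq\tau_z$, so each summand $\xi_k:=\langle\BW,\ba_k\ba_k^\top\rangle^2\dsone\leq C\tau_z^2(\tau_z^2 a^2+b^2\zeta_k^2)$. Since $\zeta_k^2$ is sub-exponential, $\xi_k$ has $\psi_1$-norm of order $\|\BW\|_F^2$, and Bernstein's inequality yields $\mathbb{P}\{|\tfrac{1}{m}\sum_k\xi_k-\E[\xi_1]|>\delta\|\BW\|_F^2\}\leq e^{-cm\delta^2}$ for any small $\delta>0$.

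The main obstacle is the uniform extension. By homogeneity I can fix $\|\bz\|=\|\bx\|$ and parameterize $\BW=\bu\bw^\top+\bw\bu^\top$ with $\|\bw\|=1$, then build an $\epsilon$-net $\mathcal{N}$ for the pair $(\bz,\bw)$ of cardinality $e^{O(n\log\epsilon^{-1})}$. Bernstein at each net point with $\delta=\Theta(\rho_3)$ gives failure probability $e^{-\Omega(m)}$, and the union bound closes precisely when $m\gtrsim n$. The delicate step is off-net control, because the truncation indicators are discontinuous in $(\bz,\BW)$. I would address this with a buffer/sandwich argument: for each net point replace the thresholds $\tau_x,\tau_z,\tau_h$ by $\tau_x\pm\epsilon'$, $\tau_z\pm\epsilon'$, $\tau_h\pm\epsilon'$ to obtain inflated and deflated versions of each event, so that for every $(\bz',\BW')$ within distance $\epsilon'$ of a net point the true indicator is sandwiched between the two. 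The excess mass supported in the buffer zone is controlled by another Bernstein estimate that exploits the bounded Gaussian density near the truncation boundaries, and choosing $\epsilon'$ to be a small multiple of the final slack absorbs the resulting error. This double-layered covering plus sandwich, applied simultaneously across the three truncation rules and to both $\bz$ and $\BW$ (whose tangent space itself varies with $\bz$), is in my view the main technical burden of the proof.
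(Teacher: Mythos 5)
Your overall strategy (compute the truncated expectation, Bernstein pointwise, then a net for uniformity) is sound and reaches the same constants, but the route differs from the paper's in a way worth comparing. The paper never works with the full truncated indicator's expectation at all: for the upper bound it simply drops $\ME_1^k(\bx)$ and $\ME_2^k(\bz)$ and reduces everything to the quadratic form $\bw^\top\lb\frac{1}{m}\sum_k|\ba_k^\top\bz|^2\ba_k\ba_k^\top\onesub{|\ba_k^\top\bz|\leq\tau_z\|\bz\|}\rb\bw$, and for the lower bound it sandwiches $\dsone_{\ME_1^k(\bx)\cap\ME_1^k(\bz)\cap\ME_2^k(\bz)}\geq\dsone_{\ME_1^k(\bz)}-\dsone_{\ME_1^k(\bz)}\onesub{|\ba_k^\top\bh|>1.15\tau_h\|\bh\|}-\dsone_{\ME_1^k(\bz)}\onesub{|\ba_k^\top\bx|>0.9\tau_x\|\bx\|}$ (Lemmas~\ref{prop:events_prop_x} and \ref{prop:events_prop_z}, which is exactly your ``freeze the averaged quantities'' step made rigorous via the deterministic surrogate events $\ME_2^k(\bx),\ME_3^k(\bx),\ME_3^k(\bz),\ME_4^k(\bz)$). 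Everything then reduces to two reusable uniform matrix bounds, Lemmas~\ref{lem:key1} and \ref{lem:key2}, so uniformity in $\BW\in T_{\bz}$ is free (spectral norm of a matrix depending only on $\bz$) and the cross term never has to be analyzed separately. This modularization buys you a cleaner bookkeeping of the $\rho_1,\rho_2$ corrections and lets the same lemmas be recycled in the proof of Theorem~\ref{thm:weak_correlation}; your direct double net over $(\bz,\bw)$ with threshold perturbation is workable but carries the discontinuity burden three times over.

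Two details in your sketch need repair. First, your claim that the cross term $\E[\eta_k^3\zeta_k\dsone_{\mathrm{trunc}}]$ is $O(\|\bz-\bx\|/\|\bx\|)\|\BW\|_F^2$ is not the right bound and, as stated, would not be absorbable: $\|\bh\|/\|\bx\|$ can be a fixed constant of order $1/11$ on the allowed ball, while the theorem's error terms $\rho_1,\rho_2$ are exponentially small in the truncation parameters. The correct argument is that $\E[\eta_k^3\zeta_k\onesub{|\eta_k|\leq\tau_z}]=0$ exactly, and the perturbation caused by intersecting with $\ME_1^k(\bx)$ and $\ME_2^k(\bz)$ is supported on an event of probability $\lesssim e^{-c\tau_x^2}+e^{-c\tau_h^2}$, so by Cauchy--Schwarz the cross term is $O\lb\tau_z^3 e^{-c\min(\tau_x,\tau_h)^2}\rb|ab|$, which does match the $\rho_2$ form. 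Second, in your sandwich step, perturbing the thresholds by an additive $\epsilon'$ does not dominate the change in the indicators when moving off the net, because $|\ba_k^\top(\bz-\bz')|$ can be as large as $\|\ba_k\|\epsilon'\asymp\sqrt{n}\,\epsilon'$ for some $k$; you must additionally split on the event $\{|\ba_k^\top(\bz-\bz')|>\epsilon^{-1}\|\bz-\bz'\|\}$ and control its contribution by a uniform spectral-norm bound of the type in Lemma~\ref{lem:key1} (this is precisely how the proof of Lemma~\ref{lem:key2} closes the off-net gap). With those two fixes your outline goes through.
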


\begin{theorem}[Restricted Weak Correlation]\label{thm:weak_correlation}
Let $\varepsilon_0$ be the numerical constant defined in \eqref{eq:basin_size2}
Then with probability exceeding $1-e^{-\Omega(m)}$,
\begin{align*}
\frac{1}{m}\ln\P_{T_{\bz}}\A_{\bz}^\top\A_{\bz}\lb\I-\P_{T_{\bz}}\rb\lb \bz\bz^\top-\bx\bx^\top\rb\rn_F\leq\sqrt{\rho_4\btup}\ln\bz\bz^\top-\bx\bx^\top\rn_F
\end{align*}
holds uniformly for all $\bz\bz^\top$ obeying $\|\bz\bz^\top-\bx\bx^\top\|_F\leq \varepsilon_0\|\BX\|_F$ provided $m\gtrsim n$,  where $\rho_4=\rho_3+6\tau_h\tau_{h,z}^2e^{-0.64\tau_h^2}+o(1)$.
\end{theorem}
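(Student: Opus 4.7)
The plan is to reduce the operator-norm inequality to a product of two scalar quantities via duality and Cauchy--Schwarz, control the first factor by Theorem~\ref{thm:well_conditioned}, and then analyse the second directly using the explicit structure of $(\I-\P_{T_\bz})(\BZ-\BX)$ together with the truncation rules. Since $\P_{T_\bz}\A_\bz^\top\A_\bz(\I-\P_{T_\bz})(\BZ-\BX)$ already lies in $T_\bz$, its Frobenius norm equals
\begin{align*}
\sup_{\BW\in T_\bz,~\|\BW\|_F=1}\langle \A_\bz(\BW),~\A_\bz((\I-\P_{T_\bz})(\BZ-\BX))\rangle,
\end{align*}
which Cauchy--Schwarz bounds by the product $\|\A_\bz(\BW)\|\cdot \|\A_\bz((\I-\P_{T_\bz})(\BZ-\BX))\|$. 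Theorem~\ref{thm:well_conditioned} immediately controls $m^{-1/2}\|\A_\bz(\BW)\|$ by $\sqrt{\btup}\|\BW\|_F$, so the task reduces to showing $m^{-1}\|\A_\bz((\I-\P_{T_\bz})(\BZ-\BX))\|^2\le\rho_4\|\BZ-\BX\|_F^2$ uniformly over the basin.

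Next I would compute $(\I-\P_{T_\bz})(\BZ-\BX)$ explicitly. Since $\bz\bz^\top\in T_\bz$, this equals $-(\I-\P_{T_\bz})(\bx\bx^\top)$; writing $\bz=\sigma_z\bu$ with $\|\bu\|=1$ and applying \eqref{eq:projT} to $\bx\bx^\top$ yields $(\I-\P_{T_\bz})(\bx\bx^\top)=\bx_\perp\bx_\perp^\top$, where $\bx_\perp:=\bx-(\bu^\top\bx)\bu$ is the component of $\bx$ orthogonal to $\bu$. Expanding $\|\BZ-\BX\|_F^2=(\|\bz\|^2-(\bu^\top\bx)^2)^2+2(\bu^\top\bx)^2\|\bx_\perp\|^2+\|\bx_\perp\|^4$ and invoking $\|\BZ-\BX\|_F\le\varepsilon_0\|\BX\|_F$ with $\varepsilon_0$ small gives the two key geometric estimates $\|\bx_\perp\|^2\lesssim \|\BZ-\BX\|_F^2/\|\bx\|^2$ and $\|\bx_\perp\|\le\|\bh\|$, where $\bh=\bz-\bx$ under the sign convention $\|\bh\|\le\|\bz+\bx\|$.

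The heart of the argument is then the estimate
\begin{align*}
\frac{1}{m}\sum_{k=1}^m(\ba_k^\top\bx_\perp)^4\,\dsone_{\ME_1^k(\bx)\cap \ME_1^k(\bz)\cap\ME_2^k(\bz)} \leq \rho_4 \|\BZ-\BX\|_F^2.
\end{align*}
The decomposition $\bx_\perp=-\bh+(\sigma_z-\bu^\top\bx)\bu$ combined with $|\ba_k^\top\bu|\le\tau_z$ (from $\ME_1^k(\bz)$), together with a rigorous version of the informal reasoning around \eqref{eq:tmp2} that turns $\ME_2^k(\bz)$ into the bound $|\ba_k^\top\bh|\le\tau_{h,z}\|\bh\|$, yields the pointwise inequality $|\ba_k^\top\bx_\perp|\,\dsone\le(\tau_z+\tau_{h,z})\|\bh\|$ on the truncated event. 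The rigorous step uses the identity $|y_k-|\ba_k^\top\bz|^2|=|\ba_k^\top(\bz+\bx)||\ba_k^\top\bh|$ together with concentration of $m^{-1}\|\by-\A(\bz\bz^\top)\|_1$ around a multiple of $\|\bz\|\|\bh\|$, and it is this point where the very definition of $\tau_{h,z}$ in the notation section enters. Factoring two powers of $(\ba_k^\top\bx_\perp)$ by the deterministic bound and controlling the remaining quadratic sum $\sum_k(\ba_k^\top\bx_\perp)^2\lesssim m\|\bx_\perp\|^2$ by Bernstein concentration, one obtains, after substituting $\|\bx_\perp\|^2\lesssim\|\BZ-\BX\|_F^2/\|\bx\|^2$ and $\|\BZ-\BX\|_F\le\varepsilon_0\|\bx\|^2$, a bound of the form $C\varepsilon_0^2(\tau_z+\tau_{h,z})^2\|\BZ-\BX\|_F^2$; choosing $\varepsilon_0$ small produces the $\rho_3$ contribution to $\rho_4$. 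The additional term $6\tau_h\tau_{h,z}^2 e^{-0.64\tau_h^2}$ arises from the truncated-Gaussian fourth-moment calculation: the residual probability that $|\ba_k^\top\bh|$ is large despite $\ME_2^k(\bz)$ contributes a boundary tail of exactly this form via standard Gaussian tail inequalities applied at the threshold implicit in $\tau_{h,z}$.

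The final step is to upgrade these pointwise-in-$\bz$ estimates to a uniform statement over the whole basin by a polynomial-cardinality $\varepsilon$-net together with a union bound (which preserves the probability $1-e^{-\Omega(m)}$ provided $m\gtrsim n$), extended off the net by Lipschitz continuity in $\bz$ of $\bx_\perp$ and of the truncation indicators after slight relaxation of their thresholds. The main obstacle I anticipate is this last uniform-in-$\bz$ extension: the event $\ME_2^k(\bz)$ couples $\ba_k$ to the global quantity $\|\by-\A(\bz\bz^\top)\|_1$, so both the indicator and its mass contribution to the fourth moment are nontrivial functions of $\bz$, and producing the sharp exponential constant $0.64$ in $\rho_4$ requires carefully tracking how much of the Gaussian mass survives on the boundary of this event, uniformly as $\bz$ moves.
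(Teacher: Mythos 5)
Your overall architecture coincides with the paper's: the Cauchy--Schwarz/duality reduction controlled by Theorem~\ref{thm:well_conditioned}, the identification of $(\I-\P_{T_{\bz}})(\bz\bz^\top-\bx\bx^\top)$ as a rank-one matrix $-\bx_\perp\bx_\perp^\top$ built from the component of $\bh=\bz-\bx$ orthogonal to $\bz$, and the reduction to a truncated fourth-moment bound $\frac{1}{m}\sum_k(\ba_k^\top\bx_\perp)^4\dsone\leq\rho_4\|\bz\bz^\top-\bx\bx^\top\|_F^2$ are all exactly what the paper does (the paper expands $(\ba_k^\top\bx_\perp)^4$ binomially into five terms $\TM_1,\dots,\TM_5$ and bounds each, but that is a presentational difference).

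The genuine gap is your pointwise inequality $|\ba_k^\top\bx_\perp|\,\dsone\leq(\tau_z+\tau_{h,z})\|\bh\|$, which rests on the claim that the truncation forces $|\ba_k^\top\bh|\leq\tau_{h,z}\|\bh\|$. The rigorous content of Lemma~\ref{prop:events_prop_z} is only $|\ba_k^\top\bh|\leq\tau_{h,z}\|\bz\|$, and this cannot be improved to the scale $\|\bh\|$: the event $\ME_2^k(\bz)$ controls $|y_k-|\ba_k^\top\bz|^2|=|\ba_k^\top(\bz+\bx)|\,|\ba_k^\top\bh|$, and when $\ba_k$ is nearly orthogonal to $\bz+\bx$ this product is small even though $|\ba_k^\top\bh|$ may be as large as order $\|\bz\|$; the informal reduction to \eqref{eq:tmp2} explicitly does not survive the formal proof. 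With the correct scale, factoring two powers out of the fourth moment yields $\tau_{h,z}^2\|\bz\|^2\cdot\frac{1}{m}\sum_k(\ba_k^\top\bx_\perp)^2\lesssim\tau_{h,z}^2\|\bz\bz^\top-\bx\bx^\top\|_F^2$, and $\tau_{h,z}^2$ is a large constant, so your main term does not produce a small $\rho_4$. The paper's resolution (in its bound for $\TM_5$) is to split the sum according to whether $|\ba_k^\top\bh|\leq 1.15\tau_h\|\bh\|$: on that event the deterministic factor is $(1.15\tau_h\|\bh\|)^2$ and the contribution $2\tau_h^2\|\bh\|^4$ is absorbed into $\rho_3$ via the smallness of $\|\bh\|/\|\bz\|$, while on the complement one caps $|\ba_k^\top\bh|$ by $\tau_{h,z}\|\bz\|$ and invokes the uniform spectral-norm bound of Lemma~\ref{lem:key1} on $\frac{1}{m}\sum_k\ba_k\ba_k^\top\dsone_{\{|\ba_k^\top\bh|>1.15\tau_h\|\bh\|\}}$, which is precisely the source of the $6\tau_h\tau_{h,z}^2e^{-0.64\tau_h^2}$ term. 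You do name this tail term, but in your argument it is an add-on to a main bound that is already (incorrectly) finite at scale $\|\bh\|$; the split-plus-spectral-bound step is not optional, it is the load-bearing part of the proof, and it also disposes of your worry about uniformity in $\bz$, since Lemma~\ref{lem:key1} and the sandwich $\ME_3^k(\bz)\subset\ME_2^k(\bz)\subset\ME_4^k(\bz)$ are established uniformly once and for all, so no $\varepsilon$-net over the truncation events themselves is needed.
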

The proofs for Theorems~\ref{thm:well_conditioned} and \ref{thm:weak_correlation}
will be deferred to Section~\ref{sec:proofs2}. It is worth noting that in Theorem~\ref{thm:well_conditioned} we have 
\begin{align*}
\btlow\leq \vartheta_1+\vartheta_2-\rho_1-\rho_2\leq \vartheta_1+\vartheta_2+\rho_1\leq\btup.
\end{align*}

The following corollary follows immediately from Theorem~\ref{thm:well_conditioned}.
\begin{corollary}\label{thm:truncated_iso}
Let $\alpha>0$ be an absolute constant. With probability exceeding $1-e^{-\Omega(m)}$,
\begin{align*}
\ln\P_{T_{\bz}}-\frac{\alpha}{m}\P_{T_{\bz}}\A_{\bz}^\top\A_{\bz}\P_{T_{\bz}}\rn\leq \max\{\lab 1-\alpha\btlow\rab,\lab 1-\alpha\btup\rab\}\numberthis\label{eq:cor_truncated_iso}
\end{align*}
holds uniformly { for all $\bz\bz^\top$ obeying $\|\bz\bz^\top-\bx\bx^\top\|_F\leq\frac{1}{13}\|\BX\|_F$}. Moreover, $\max\{\lab 1-\alpha\btlow\rab,\lab 1-\alpha\btup\rab\}<1$ when $\alpha<\frac{2}{\btup}$ and the minimum is achieved at $\alpha=\frac{2}{\btlow+\btup}$ with the value given by $\frac{\btup-\btlow}{\btup+\btlow}$.
\end{corollary}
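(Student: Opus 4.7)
}
The plan is to recognize that the operator $\P_{T_{\bz}}-\frac{\alpha}{m}\P_{T_{\bz}}\A_{\bz}^\top\A_{\bz}\P_{T_{\bz}}$ is self-adjoint, so its spectral norm equals the supremum of the absolute Rayleigh quotient. Since the operator vanishes on $T_{\bz}^\perp$, it suffices to take the supremum over $\BW\in T_{\bz}$ with $\|\BW\|_F=1$. For such $\BW$ one has $\P_{T_{\bz}}\BW=\BW$, so the quadratic form collapses to
\begin{align*}
\langle\BW,(\P_{T_{\bz}}-\tfrac{\alpha}{m}\P_{T_{\bz}}\A_{\bz}^\top\A_{\bz}\P_{T_{\bz}})\BW\rangle=\|\BW\|_F^2-\frac{\alpha}{m}\|\A_{\bz}(\BW)\|^2.
\end{align*}

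Next I would invoke Theorem~\ref{thm:well_conditioned}, which supplies, on an event of probability at least $1-e^{-\Omega(m)}$ and uniformly over all $\bz\bz^\top$ within the prescribed ball, the two-sided bound $\btlow\|\BW\|_F^2\le\frac{1}{m}\|\A_{\bz}(\BW)\|^2\le\btup\|\BW\|_F^2$ for every $\BW\in T_{\bz}$. Multiplying by $\alpha>0$ and subtracting from $\|\BW\|_F^2$ yields
\begin{align*}
(1-\alpha\btup)\|\BW\|_F^2\le\|\BW\|_F^2-\frac{\alpha}{m}\|\A_{\bz}(\BW)\|^2\le(1-\alpha\btlow)\|\BW\|_F^2,
\end{align*}
so taking the maximum of the two endpoint absolute values and supping over the unit sphere in $T_{\bz}$ gives exactly the inequality \eqref{eq:cor_truncated_iso}.

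For the second assertion, I would treat the two quantities $|1-\alpha\btlow|$ and $|1-\alpha\btup|$ as piecewise-linear functions of $\alpha>0$. Since $0<\btlow\le\btup$, the first is strictly less than $1$ for all $\alpha\in(0,2/\btlow)$ and the second for all $\alpha\in(0,2/\btup)$, so their maximum is strictly less than $1$ precisely on $(0,2/\btup)$. To locate the minimizer, note that for $\alpha\in(0,1/\btup]$ both quantities equal $1-\alpha\btlow$ and $1-\alpha\btup$ respectively and the maximum $1-\alpha\btlow$ is decreasing, while for $\alpha\ge 2/(\btlow+\btup)$ the first switches sign and the maximum becomes $\alpha\btup-1$, which is increasing. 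Equating $1-\alpha\btlow=\alpha\btup-1$ gives $\alpha=\frac{2}{\btlow+\btup}$ and common value $\frac{\btup-\btlow}{\btup+\btlow}$.

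The whole argument is essentially a one-line consequence of Theorem~\ref{thm:well_conditioned}; there is no genuine obstacle, the only thing to be careful about is making sure the supremum is restricted to $T_{\bz}$ (which is free because the projectors sandwich the operator) and that the high-probability event and uniformity over $\bz$ are inherited verbatim from Theorem~\ref{thm:well_conditioned}.
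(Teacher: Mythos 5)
Your proposal is correct and follows essentially the same route as the paper: reduce the operator norm to the supremum of $\bigl|\,\|\BW\|_F^2-\tfrac{\alpha}{m}\|\A_{\bz}(\BW)\|^2\,\bigr|$ over unit-norm $\BW\in T_{\bz}$ and then apply the two-sided bound of Theorem~\ref{thm:well_conditioned}, with the same elementary piecewise-linear analysis for the optimal $\alpha$. The only difference is that you spell out why the supremum may be restricted to $T_{\bz}$, a point the paper leaves implicit.
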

\begin{proof}
By Theorem~\ref{thm:well_conditioned}, one can easily see that
\begin{align*}
\lab\ln\BW\rn_F^2-\frac{\alpha}{m}\ln\A_{\bz}\lb\BW\rb\rn^2\rab\leq\max\{\lab 1-\alpha\btlow\rab,\lab 1-\alpha\btup\rab\}\ln\BW\rn^2_F
\end{align*}
holds for all for all $\bz\bz^\top$ obeying $\|\bz\bz^\top-\bx\bx^\top\|_F\leq\frac{1}{13}\|\BX\|_F$ and $\BW\in T_{\bz}$.
Thus, \eqref{eq:cor_truncated_iso} can be established by noting that 
$$
\ln\P_{T_{\bz}}-\frac{\alpha}{m}\P_{T_{\bz}}\A_{\bz}^\top\A_{\bz}\P_{T_{\bz}}\rn=\max_{\BW\in T_{\bz},~\ln\BW\rn_F=1}\lab\ln\BW\rn_F^2-\frac{\alpha}{m}\ln \A_{\bz}\lb\BW\rb\rn^2\rab.
$$
The rest of the claims can be verified easily. 
\end{proof}

Letting $\BW_l = \BZ_l+\alpha_l\P_{T_l}(\BG_l)$, we can further establish the following lemma.
\begin{lemma}\label{lem:W_and_Z}
If $\ln\BW_l-\BX\rn_F\leq \mu\ln\BZ_l-\BX\rn_F$ and $\ln\BZ_l-\BX\rn_F\leq\varepsilon_0\ln\BX\rn_F$, then 
\begin{align*}
\ln\BZ_{l+1}-\BX\rn_F\leq \mu\sqrt{1+16\mu^2\varepsilon_0^2}\ln\BZ_l-\BX\rn_F.
\end{align*}
\end{lemma}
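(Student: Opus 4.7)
The plan is to exploit the spectral structure of $\BW_l \in T_l$ that was already established in Section~\ref{sec:algs}. Because $\BW_l = \BZ_l + \alpha_l \P_{T_l}(\BG_l)$ is a rank-at-most-two element of the tangent space, its nonzero $2\times 2$ block is indefinite, and its spectral decomposition can be written as $\BW_l = \lambda_+ \bq_+ \bq_+^\top + \lambda_- \bq_- \bq_-^\top$ with $\lambda_+ \geq 0 \geq \lambda_-$ and orthonormal eigenvectors $\bq_+, \bq_-$. The retraction $\T_1$ therefore outputs $\BZ_{l+1} = \lambda_+ \bq_+ \bq_+^\top$, so that the residual is rank one in the ``negative'' eigendirection, $\BZ_{l+1} - \BW_l = -\lambda_- \bq_- \bq_-^\top$, with $\|\BZ_{l+1} - \BW_l\|_F^2 = \lambda_-^2$.

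First, I would write $\BZ_{l+1} - \BX = (\BZ_{l+1} - \BW_l) + (\BW_l - \BX)$ and expand the squared Frobenius norm,
\[
\|\BZ_{l+1} - \BX\|_F^2 = \|\BW_l - \BX\|_F^2 + \|\BZ_{l+1} - \BW_l\|_F^2 + 2\langle \BZ_{l+1} - \BW_l,\, \BW_l - \BX\rangle.
\]
Next, I would evaluate the cross term from the spectral data, using the eigenvalue identity $\bq_-^\top \BW_l \bq_- = \lambda_-$ together with $\bq_-^\top \BX \bq_- = (\bq_-^\top \bx)^2 \geq 0$:
\[
\langle -\lambda_- \bq_-\bq_-^\top,\, \BW_l - \BX\rangle = -\lambda_-\bigl(\lambda_- - (\bq_-^\top \bx)^2\bigr) = -\lambda_-^2 + \lambda_-(\bq_-^\top \bx)^2.
\]
Substituting this back together with $\|\BZ_{l+1} - \BW_l\|_F^2 = \lambda_-^2$ produces the identity
\[
\|\BZ_{l+1} - \BX\|_F^2 = \|\BW_l - \BX\|_F^2 - \lambda_-^2 + 2\lambda_- (\bq_-^\top \bx)^2.
\]

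Finally, since $\lambda_- \leq 0$ both correction terms $-\lambda_-^2$ and $2\lambda_-(\bq_-^\top \bx)^2$ are non-positive, so $\|\BZ_{l+1} - \BX\|_F \leq \|\BW_l - \BX\|_F \leq \mu \|\BZ_l - \BX\|_F$, which is sharper than (and therefore implies) the stated bound $\mu\sqrt{1+16\mu^2\varepsilon_0^2}\|\BZ_l - \BX\|_F$. I do not anticipate a substantive obstacle: the spectral structure of $\BW_l$ is in hand from the discussion following \eqref{eq:tangent}, and the non-positivity of $\lambda_-$ delivers the required cancellation automatically. The hypothesis $\|\BZ_l - \BX\|_F \leq \varepsilon_0 \|\BX\|_F$ is not actually needed for this route, which suggests that the looser $\sqrt{1+16\mu^2\varepsilon_0^2}$ factor in the statement arises from an alternative triangle-inequality argument that accounts for $\BX \notin T_l$ by bounding $\|\P_{T_l}^\perp(\BX)\|_F \leq \|\BZ_l - \BX\|_F^2/\|\BX\|_F \leq \varepsilon_0 \|\BZ_l - \BX\|_F$ with suboptimal constants.
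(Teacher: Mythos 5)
Your proof is correct, and it takes a genuinely different route from the paper's. The paper first uses the best-approximation property of $\T_1$ to get $\|\BZ_{l+1}-\BW_l\|_F\leq\|\BW_l-\BX\|_F$, then writes $\BZ_{l+1}=\P_{T_{l+1}}(\BW_l)$, splits $\|\BZ_{l+1}-\BX\|_F^2$ Pythagorean-style into $\|\P_{T_{l+1}}(\BW_l-\BX)\|_F^2+\|(\I-\P_{T_{l+1}})\BX\|_F^2$, and controls the second term by the tangent-space bound $\|(\I-\P_{T_{l+1}})\BX\|_F\leq\|\BZ_{l+1}-\BX\|_F^2/\|\BX\|_F$ (Lemma~4.1 of \cite{WCCL:SIMAX:16}); that is exactly where the factor $\sqrt{1+16\mu^2\varepsilon_0^2}$ and the hypothesis $\|\BZ_l-\BX\|_F\leq\varepsilon_0\|\BX\|_F$ enter, so your closing guess about the provenance of the constant is accurate. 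Your argument instead exploits the positive semidefinite structure: since $\BW_l\in T_l$ has at most one nonnegative eigenvalue, $\T_1(\BW_l)=\lambda_+\bq_+\bq_+^\top$ coincides with the projection of $\BW_l$ onto the PSD cone, and your explicit computation of the cross term is precisely the verification that this projection is non-expansive toward the PSD matrix $\BX$ (the sign conditions $\lambda_-\leq 0$ and $(\bq_-^\top\bx)^2\geq 0$ are the obtuse-angle property of convex projection made concrete). This yields the strictly sharper conclusion $\|\BZ_{l+1}-\BX\|_F\leq\|\BW_l-\BX\|_F\leq\mu\|\BZ_l-\BX\|_F$ without the second hypothesis, which implies the stated bound since $\sqrt{1+16\mu^2\varepsilon_0^2}\geq 1$; propagated into the proof of Theorem~\ref{thm:main} it would even remove the $\sqrt{1+16\mu^2\varepsilon_0^2}$ inflation of $\nu_g$. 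The trade-off is generality: the paper's tangent-space argument carries over verbatim to rank-$r$ and non-symmetric low-rank recovery, where there is no convex cone whose metric projection agrees with the fixed-rank truncation, whereas your argument is tied to the rank-one PSD setting (more precisely, to $\BW_l$ having at most one nonnegative eigenvalue). One small wording nit: the middle $2\times 2$ block need not be strictly indefinite (it can be singular when $s_l=0$), but your argument only needs $\lambda_+\geq 0\geq\lambda_-$, which always holds.
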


\begin{proof}
Since $\BZ_{k+1}=\T_1(\BW_l)$ is the closest positive semidefinite rank-$1$ matrix to $\BW_l$, it follows that 
\begin{align*}
\|\BZ_{l+1}-\BX\|_F\leq \|\BZ_{l+1}-\BW_l\|_F+\|\BW_l-\BX\|_F\leq 2\|\BW_l-\BX\|_F.\numberthis\label{eq:M_and_Z_01}
\end{align*}
Recall that $T_{l+1}$ is the tangent space of rank-$1$ positive semidefinite matrix at $\BZ_{l+1}$. One has $\BZ_{l+1}=\P_{T_{l+1}}(\BW_l)$, and hence
\begin{align*}
\|\BZ_{l+1}-\BX\|_F^2& = \|\P_{T_{l+1}}(\BW_l)-\BX\|_F^2\\
&=\|\P_{T_{l+1}}(\BW_l-\BX)\|_F^2+\|(\I-\P_{T_{l+1}})(\BX)\|_F^2\\
&\leq \|\BW_l-\BX\|_F^2+\frac{\|\BZ_{l+1}-\BX\|_F^4}{\|\BX\|_F^2}\\
&\leq  \|\BW_l-\BX\|_F^2+\frac{16\|\BW_l-\BX\|_F^4}{\|\BX\|_F^2}\\
&\leq \mu^2\|\BZ_l-\BX\|_F^2+\frac{16\mu^4\|\BZ_l-\BX\|_F^4}{\|\BX\|_F^2}\\
&\leq \mu^2(1+16\mu^2\varepsilon_0^2)\|\BZ_l-\BX\|_F^2,
\end{align*}
where the third line follows from \cite[Lemma~4.1]{WCCL:SIMAX:16}\footnote{Lemma~4.1 in \cite{WCCL:SIMAX:16} was established for non-symmetric matrices and the corresponding tangent spaces, but the result can be easily extended to the symmetric case.}, and the fourth line follows from \eqref{eq:M_and_Z_01}.
\end{proof}
Now we are in position to prove Theorems~\ref{thm:main} and \ref{thm:rgd_sd_local}.
\begin{proof}[Proof of Theorem~\ref{thm:main}]
Assume $\|\BZ_l-\BX\|_F\le \varepsilon_0\|\BX\|_F$ which can be proved by mathematical induction for all $l\geq 0$ once we show that $\|\BZ_{l+1}-\BX\|_F\leq \nu_g\|\BZ_l-\BX\|_F$. {Noting that 
$
\A_l^\top(\by) = \A_l^\top\A_l(\BX),
$}
we have 
$
\BW_l =\BZ_l+\frac{\alpha_l}{m}\P_{T_l}\A_l^\top\A_l(\BX-\BZ_l),
$
so
\begin{align*}
\|\BW_l-\BX\|_F &= \| (\BZ_l-\BX)-\frac{\alpha_l}{m}\P_{T_l}\A_l^\top\A_l(\BZ_l-\BX)\|_F\\
& \leq \|(\P_{T_l}-\frac{\alpha_l}{m}\P_{T_l}\A_l^\top\A_l\P_{T_l})(\BZ_l-\BX)\|_F+\|(\I-\P_{T_l})\BX\|_F\\
&\quad+\frac{\alpha_l}{m}\|\P_{T_l}\A_l^\top\A_l(\I-\P_{T_l})(\BZ_l-\BX)\|_F.\numberthis\label{eq:iter_relation}
\end{align*}
By Corollary~\ref{thm:truncated_iso}, we have
\begin{align*}
\|(\P_{T_l}-\frac{\alpha_l}{m}\P_{T_l}\A_l^\top\A_l\P_{T_l})(\BZ_l-\BX)\|_F\leq \max\{|1-\alpha_l\btlow|,|1-\alpha_l\btup|\}\|\BZ_l-\BX\|_F.
\end{align*}
By Theorem~\ref{thm:weak_correlation}, we have 
\begin{align*}
\frac{\alpha_l}{m}\|\P_{T_l}\A_l^\top\A_l(\I-\P_{T_l})(\BZ_l-\BX)\|_F\leq \alpha_l\sqrt{\rho_4\btup}\ln\BZ_l-\BX\rn_F.
\end{align*}
By \cite[Lemma~4.1]{WCCL:SIMAX:16}, we have 
\begin{align*}
\|(\I-\P_{T_l})\BX\|_F\leq \frac{\|\BZ_l-\BX\|_F^2}{\|\BX\|_F}\leq \varepsilon_0\|\BZ_l-\BX\|_F.
\end{align*}
Substituting the above three bounds into \eqref{eq:iter_relation} yields 
\begin{align*}
\|\BW_l-\BX\|_F\leq\mu\|\BZ_l-\BX\|_F,
\end{align*}
where $\mu=\max\{|1-\alpha_l\btlow|,|1-\alpha\btup|\}+\alpha_l\sqrt{\rho_4\btup}+\varepsilon_0$.

It follows from Lemma~\ref{lem:W_and_Z} that 
\begin{align*}
\ln\BZ_{l+1}-\BX\rn_F\leq \mu\sqrt{1+16\mu^2\varepsilon_0^2}\ln\BZ_l-\BX\rn_F.
\end{align*}
Define $\nu_g=\mu\sqrt{1+16\mu^2\varepsilon_0^2}$. It is easy to see that $\nu_g<1$ as long as 
\begin{align*}
\mu\leq \frac{1}{\sqrt{1+16\varepsilon_0^2}},
\end{align*}
which in turn requires 
\begin{align*}
\max\{|1-\alpha_l\btlow|,|1-\alpha\btup|\}+\alpha_l\sqrt{\rho_4\btup}\leq \epup,\numberthis\label{eq:bound4Rw}
\end{align*}
where $\epup=\lb 1-\varepsilon_0\sqrt{1+16\varepsilon_0}\rb/\sqrt{1+16\varepsilon_0}$.

Noting that \begin{align*}
\max\{|1-\alpha_l\btlow|,|1-\alpha_l\btup|\}=
\begin{cases}
\alpha_l\btup-1 &\mbox{if } \alpha_l \in \lsb \frac{2}{\btlow+\btup},\frac{2}{\btup}\rb\\
1-\alpha_l\btlow & \mbox{if }\alpha_l\in\lb0, \frac{2}{\btlow+\btup}\rsb,
\end{cases}
\end{align*}
a simple calculation shows that \eqref{eq:bound4Rw} can be satisfied if 
\begin{align*}
\alpha_l\in\lsb\frac{1-\epup}{\btlow-\sqrt{\rho_4\btup}},\frac{1+\epup}{\btup+\sqrt{\rho_4\btup}}\rsb
\end{align*}
conditioned on 
\begin{align*}
\btlow>\sqrt{\rho_4\btup}\quad\mbox{and}\quad(1-\epup)\btup+2\sqrt{\rho_4\btup}\leq (1+\epup)\btlow,
\end{align*}
which concludes the proof.
\end{proof}
\begin{proof}[Proof of Theorem~\ref{thm:rgd_sd_local}]
By Theorem~\ref{thm:well_conditioned}, we know that 
\begin{align*}
\frac{1}{\btup}\leq\alpha_l\leq\frac{1}{\btlow}.
\end{align*}
Thus, in order to show the linear convergence of TRGrad with the steepest descent stepsize, it only requires to verify that (see \eqref{eq:stepsize_range})
\begin{align*}
\frac{1-\epup}{\btlow-\sqrt{\rho_4\btup}}\leq\frac{1}{\btup}\leq \frac{1}{\btlow}\leq \frac{1+\epup}{\btup+\sqrt{\rho_4\btup}},
\end{align*}
which can be satisfied under the assumption.
\end{proof}
\section{Proofs for Section~\ref{sec:proofs1}}\label{sec:proofs2}
\subsection{Auxiliary events and properties}\label{subsec:auxiliary_events}
In this section we introduce a few auxiliary events to facilitate the analysis and present the properties of the events. 
The set of auxiliary are summarized as follows: 
\begin{align*}
&\ME_2^k(\bx) = \lcb |\ba_k^\top\bx|\leq 0.9\tau_x\|\bx\|\rcb,\\
&\ME_3^k(\bx) = \lcb |\ba_k^\top\bx|\leq 1.1\tau_x\|\bx\|\rcb,\\
&\ME^k_3(\bz)=\lcb\lab y_k-|\ba_k^\top\bz|^2\rab\leq 1.15\tau_h\|\bh\| \lb |\ba_k^\top\bz|+\sqrt{y_k} \rb\rcb,\\
&\ME^k_4(\bz)=\lcb\lab y_k-|\ba_k^\top\bz|^2\rab\leq 3\tau_h\|\bh\| \lb |\ba_k^\top\bz|+\sqrt{y_k} \rb\rcb,\\
&\ME^k_5(\bz)=\lcb| \ba_k^\top\bh|\leq 1.15\tau_h\|\bh\|\rcb,\\
&\ME^k_6(\bz)=\lcb| \ba_k^\top\bh|\leq 3\tau_h\|\bh\|\rcb,
\end{align*}
where in the last four events $\bh=\bz-\bx$. The following two lemmas establish the connection between the auxiliary events and the events that determine the truncation rules in Algorithm~\ref{alg:tRGrad}.

\begin{lemma}\label{prop:events_prop_x}
With probability at least $1-e^{-\Omega(m)}$, we have 
\begin{align*}
\ME^k_2(\bx)\subset \ME^k_1(\bx)\subset \ME^k_3(\bx)
\end{align*}
provided $m\gtrsim n$.
\end{lemma}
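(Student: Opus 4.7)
The plan is to reduce the lemma to a single scalar concentration statement about $\|\by\|_1/m$. Indeed, since $\sqrt{y_k}=|\ba_k^\top\bx|$, all three events $\ME^k_1(\bx)$, $\ME^k_2(\bx)$, $\ME^k_3(\bx)$ are thresholding conditions on the same scalar $|\ba_k^\top\bx|$, with thresholds $\tau_x\sqrt{\|\by\|_1/m}$, $0.9\tau_x\|\bx\|$, and $1.1\tau_x\|\bx\|$ respectively. Thus the two inclusions will follow deterministically, and uniformly in $k$, as soon as I establish the sandwich
\[
0.9\,\|\bx\| \le \sqrt{\|\by\|_1/m} \le 1.1\,\|\bx\|
\]
on an event of probability at least $1-e^{-\Omega(m)}$.

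The heart of the argument is then the concentration of $\frac{1}{m}\|\by\|_1 = \frac{1}{m}\sum_{k=1}^m|\ba_k^\top\bx|^2$ around its mean $\|\bx\|^2$. Since $\bx$ is a fixed ground-truth vector and $\ba_k\sim\N(0,\BI_n)$ are i.i.d., the summands $|\ba_k^\top\bx|^2$ are i.i.d.\ and, after centering, sub-exponential with scale $\|\bx\|^2$; equivalently, $\|\by\|_1/\|\bx\|^2$ is a $\chi^2_m$ random variable. I would apply Bernstein's inequality (or the standard two-sided $\chi^2_m$ tail bound) to obtain
\[
\prob{\Bigl|\tfrac{1}{m}\|\by\|_1 - \|\bx\|^2\Bigr| > 0.19\,\|\bx\|^2} \le 2e^{-cm}
\]
for some absolute constant $c>0$. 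The constant $0.19$ is chosen so that, after taking square roots of $0.81$ and $1.21$, the bounds match the targeted sandwich exactly.

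Once the sandwich is in hand, the inclusions are immediate for every index $k$: if $|\ba_k^\top\bx|\le 0.9\tau_x\|\bx\|$, then $|\ba_k^\top\bx|\le \tau_x\sqrt{\|\by\|_1/m}$, giving $\ME^k_2(\bx)\subset\ME^k_1(\bx)$; and if $|\ba_k^\top\bx|\le\tau_x\sqrt{\|\by\|_1/m}$, then $|\ba_k^\top\bx|\le 1.1\tau_x\|\bx\|$, giving $\ME^k_1(\bx)\subset\ME^k_3(\bx)$. There is no substantive obstacle here; the only thing to be careful about is selecting the concentration bound with the right numerical constants. The hypothesis $m\gtrsim n$ is not actually needed for this particular step (the estimate is purely one-dimensional), and is simply inherited from the regime in which the surrounding results in Section~\ref{sec:proofs2} are stated.
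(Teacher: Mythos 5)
Your proof is correct and follows essentially the same strategy as the paper's: both reduce the two inclusions to the deterministic sandwich $0.9\|\bx\|\leq\sqrt{\|\by\|_1/m}\leq 1.1\|\bx\|$ and obtain it from concentration of $\frac{1}{m}\sum_{k=1}^m|\ba_k^\top\bx|^2$ about $\|\bx\|^2$. The only (minor) divergence is in how that concentration is sourced: the paper invokes the uniform-in-$\bz$ bound from Cand\`es--Strohmer--Voroninski, which is where the hypothesis $m\gtrsim n$ enters, whereas you use a pointwise $\chi^2_m$ tail bound at the fixed vector $\bx$ --- so your observation that $m\gtrsim n$ is not actually needed for this particular lemma is accurate.
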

\begin{proof}
When $\delta\in(0,0.5)$, it follows from \cite{CSV:CPAM:13} that
\begin{align*}
(1-\delta)\ln\bz\rn^2\leq \frac{1}{m}\sum_{k=1}^n|\ba_k^\top\bz|^2\leq (1+\delta)\ln\bz\rn^2\numberthis\label{eq:event_04}
\end{align*}
holds for all $\bz\in\R^n$ with probability $1-2e^{-m\epsilon^2/2}$ provided $m\geq 20\delta^{-2}n$, where $\delta=4(\epsilon^2+\epsilon)$. Since  $\frac{1}{m}\ln\by\rn_1=\frac{1}{m}\sum_{k=1}^n|\ba_k^\top\bx|^2$, the proof is complete by choosing $\delta$ properly.
\end{proof}

\begin{lemma}\label{prop:events_prop_z}
With probability at least $1-e^{-\Omega(m)}$, 
\begin{align*}
\ME_5^k(\bz)\subset \ME_3^k(\bz),\quad\ME_6^k(\bz)\subset \ME_4^k(\bz),\quad\mbox{and}\quad
\ME_3^k(\bz)\subset \ME_2^k(\bz){\subset\ME_4^k(\bz)}.\numberthis\label{eq:event_01}
\end{align*}
hold for all $\bz$ and $\bx$ satisfying $\ln\bh\rn\leq\frac{1}{11}\ln\bz\rn$ provided $m\gtrsim n$. Under the same condition, one has 
\begin{align*}
\ME_1^k(\bz)\cap\ME_1^k(\bx)\cap\ME_j^k(\bz)\subset\lcb |\ba_k^\top\bh|\leq \tau_{h,z}\|\bz\|\rcb,\quad j=3,4, \numberthis\label{eq:event_02}
\end{align*}
where
$\tau_{h,z}=\tau_z+\lb 0.3\tau_h\lb\tau_z+1.2\tau_x\rb+\tau_z^2 \rb^{1/2}$.
\end{lemma}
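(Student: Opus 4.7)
The plan is to split the statement into three independent pieces: the deterministic inclusions $\ME_5^k(\bz)\subset\ME_3^k(\bz)$ and $\ME_6^k(\bz)\subset\ME_4^k(\bz)$; the stochastic chain $\ME_3^k(\bz)\subset\ME_2^k(\bz)\subset\ME_4^k(\bz)$; and the tail bound~\eqref{eq:event_02}. Each piece is handled separately, and only the middle one requires concentration.

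For the first piece, I would just use the algebraic identity
\[
y_k-|\ba_k^\top\bz|^2=|\ba_k^\top\bx|^2-|\ba_k^\top\bz|^2=-(\ba_k^\top\bh)\,\ba_k^\top(\bz+\bx)
\]
together with the triangle inequality $|\ba_k^\top(\bz+\bx)|\leq|\ba_k^\top\bz|+|\ba_k^\top\bx|=|\ba_k^\top\bz|+\sqrt{y_k}$. Substituting $|\ba_k^\top\bh|\leq 1.15\tau_h\|\bh\|$ from $\ME_5^k(\bz)$ reproduces the defining inequality of $\ME_3^k(\bz)$, and the same computation with constant $3$ delivers $\ME_6^k(\bz)\subset\ME_4^k(\bz)$.

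For the second piece, after canceling the common positive factor $\tau_h(|\ba_k^\top\bz|+\sqrt{y_k})$ on either side of the inequality defining $\ME_2^k(\bz)$, both $\ME_3^k(\bz)\subset\ME_2^k(\bz)$ and $\ME_2^k(\bz)\subset\ME_4^k(\bz)$ collapse to the single two-sided bound
\[
1.15\,\|\bh\|\,\|\bz\|\;\leq\;\frac{1}{m}\|\by-\A(\bz\bz^\top)\|_1\;\leq\;3\,\|\bh\|\,\|\bz\|,
\]
which has to hold uniformly on $\{\bz:\|\bh\|\leq\|\bz\|/11\}$. The middle term equals $\frac{1}{m}\sum_k|\ba_k^\top\bh|\cdot|\ba_k^\top(\bz+\bx)|$, a sum of sub-exponential random variables. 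Since $(\ba_k^\top\bh,\,\ba_k^\top(\bz+\bx))$ is jointly Gaussian with some correlation $\rho\in[-1,1]$, each summand has expectation $\|\bh\|\|\bz+\bx\|\cdot\tfrac{2}{\pi}(\sqrt{1-\rho^2}+\rho\arcsin\rho)$, whose trailing factor is bounded in $[2/\pi,1]$. Because $\|\bh\|\leq\|\bz\|/11$ forces $\|\bz+\bx\|\in[\tfrac{21}{11},\tfrac{23}{11}]\|\bz\|$, this mean always lies in $[\tfrac{42}{11\pi},\,\tfrac{23}{11}]\|\bh\|\|\bz\|\approx[1.22,2.09]\|\bh\|\|\bz\|$, comfortably inside $(1.15,3)\|\bh\|\|\bz\|$. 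A Bernstein inequality for sub-exponential sums then gives pointwise concentration with probability $1-e^{-\Omega(m)}$. To upgrade this to a uniform bound I would cover the shell $\|\bz\|\in[\tfrac{11}{12}\|\bx\|,\tfrac{11}{10}\|\bx\|]$ by an $\epsilon$-net of cardinality $e^{O(n)}$ and use the Lipschitz continuity of $g(\bz)=\frac{1}{m}\sum_k\lab y_k-|\ba_k^\top\bz|^2\rab$, whose Lipschitz modulus is controlled by $\frac{1}{m}\sum_k\|\ba_k\|^2(\|\bx\|+\|\bz\|)=O(\|\bx\|)$ with high probability; once $m\gtrsim n$, the $e^{O(n)}$ union-bound penalty is absorbed by the Bernstein factor $e^{-\Omega(m)}$.

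For the third piece, I would apply Lemma~\ref{prop:events_prop_x} to obtain $\ME_1^k(\bx)\subset\ME_3^k(\bx)$ with high probability, so that $\sqrt{y_k}=|\ba_k^\top\bx|\leq 1.1\tau_x\|\bx\|\leq 1.2\tau_x\|\bz\|$, where the last step uses $\|\bx\|\leq\tfrac{12}{11}\|\bz\|$ from $\|\bh\|\leq\|\bz\|/11$. Combining this with $|\ba_k^\top\bz|\leq\tau_z\|\bz\|$ from $\ME_1^k(\bz)$ and $\lab y_k-|\ba_k^\top\bz|^2\rab\leq c_j\tau_h\|\bh\|(|\ba_k^\top\bz|+\sqrt{y_k})$ from $\ME_j^k(\bz)$ (with $c_3=1.15$, $c_4=3$), and plugging in $\|\bh\|\leq\|\bz\|/11$, yields
\[
y_k\leq|\ba_k^\top\bz|^2+\tfrac{c_j}{11}\tau_h(\tau_z+1.2\tau_x)\|\bz\|^2\leq\bigl(\tau_z^2+0.3\tau_h(\tau_z+1.2\tau_x)\bigr)\|\bz\|^2,
\]
because $c_j/11\leq 3/11<0.3$. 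Taking square roots and invoking $|\ba_k^\top\bh|\leq|\ba_k^\top\bz|+\sqrt{y_k}$ gives $|\ba_k^\top\bh|\leq\tau_{h,z}\|\bz\|$, which is precisely~\eqref{eq:event_02}. The hard part will be the uniform concentration in the second piece: the Gaussian mean $\approx 1.22\|\bh\|\|\bz\|$ sits only about $5\%$ above the target lower bound $1.15\|\bh\|\|\bz\|$, so the Bernstein tail width and the $\epsilon$-net resolution must both be calibrated tightly to stay within that margin, which is what forces the hidden constant in $m\gtrsim n$ to depend explicitly on the truncation parameters $\tau_h$ and $\tau_x$.
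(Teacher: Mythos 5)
Your proposal is correct and reaches the same conclusions, but it diverges from the paper in two places worth noting. For the middle chain $\ME_3^k(\bz)\subset\ME_2^k(\bz)\subset\ME_4^k(\bz)$, the paper does not prove the sandwich bound $1.15\|\bh\|\|\bz\|\le\frac{1}{m}\|\by-\A(\bz\bz^\top)\|_1\le 3\|\bh\|\|\bz\|$ at all --- it imports it verbatim as Eq.~(5.9) of \cite{CC:CPAM:17} --- whereas you rederive it from the bivariate-Gaussian mean formula plus Bernstein and a net; your computation of the mean range $[\tfrac{42}{11\pi},\tfrac{23}{11}]\|\bh\|\|\bz\|$ is exactly the calculation underlying the cited constants, so this buys self-containedness at the cost of length. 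One caveat on that piece: the lemma is stated uniformly over \emph{both} $\bz$ and $\bx$, so your net and Lipschitz argument should run over the pair (e.g., over normalized $(\bh,\bz)$), not just over a shell of $\bz$ with $\bx$ held fixed; this is routine but your sketch as written only covers $\bz$. For \eqref{eq:event_02}, your route is genuinely different from and cleaner than the paper's: you bound $y_k\le|\ba_k^\top\bz|^2+\lab y_k-|\ba_k^\top\bz|^2\rab\le\lb\tau_z^2+0.3\tau_h(\tau_z+1.2\tau_x)\rb\|\bz\|^2$ and then use the plain triangle inequality $|\ba_k^\top\bh|\le|\ba_k^\top\bz|+\sqrt{y_k}$, whereas the paper works through the quadratic inequality $\lb|\ba_k^\top\bh|-|\ba_k^\top\bz|\rb^2\le\lab y_k-|\ba_k^\top\bz|^2\rab+|\ba_k^\top\bz|^2$; both land on exactly $\tau_{h,z}=\tau_z+\lb 0.3\tau_h(\tau_z+1.2\tau_x)+\tau_z^2\rb^{1/2}$, and your version avoids the case analysis hidden in the paper's lower bound on $|\ba_k^\top(2\bz-\bh)||\ba_k^\top\bh|$. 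The deterministic inclusions $\ME_5^k\subset\ME_3^k$ and $\ME_6^k\subset\ME_4^k$ are handled identically in both.
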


\begin{proof}
The first two claims of \eqref{eq:event_01} can be verified directly. For example, when $|\ba_k^\top\bh|\leq 1.15\tau_h\|\bh\|$, we have 
\begin{align*}
\lab y_k-|\ba_k^\top\bz|^2\rab&=\lab |\ba_k^\top\bx|^2-|\ba_k^\top\bz|^2\rab\leq \lb|\ba_k^\top\bx|+|\ba_k^\top\bz|\rb|\ba_k^\top\bh|\\
&\leq 1.15\tau_h\|\bh\|\lb |\ba_k^\top\bz|+\sqrt{y_k}\rb.
\end{align*}
The third claim of \eqref{eq:event_01} follows immediately from \cite[Eq.~(5.9)]{CC:CPAM:17}, which states that with probability $1-e^{-\Omega(m)}$,
\begin{align*}
1.15\ln\bh\rn\ln\bz\rn\leq \frac{1}{m}\ln\A\lb\bz\bz^\top-\bx\bx^\top\rb\rn_1\leq 3\ln\bh\rn\ln\bz\rn
\end{align*}
holds for all $\bz$ and $\bx$ satisfying $\ln\bh\rn\leq\frac{1}{11}\ln\bz\rn$ provided $m\gtrsim n$.

By Lemma~\ref{prop:events_prop_z}, it suffices to show that 
\begin{align*}
{\ME_1^k(\bz)\cap\ME_3^k(\bx)\cap\ME_j^k(\bz)\subset\lcb |\ba_k^\top\bh|\leq \tau_{h,z}\ln\bz\rn\rcb,\quad j=3,4}.
\end{align*}
We only need to check the case when $j=4$. Note that 
\begin{align*}
\lab y_k-|\ba_k^\top\bz|^2\rab & = |\ba_k^\top(\bz-\bx)||\ba_k^\top(\bz-\bx)|\\
& = |\ba_k^\top(2\bz-\bh)||\ba_k^\top\bh|\\
&\geq |\ba_k^\top\bh|^2-2|\ba_k^\top\bh||\ba_k^\top\bz|\\
&=\lb|\ba_k^\top\bh|-|\ba_k^\top\bz|\rb^2-|\ba_k^\top\bz|^2,
\end{align*}
where in the second line we have used the substitution $\bh=\bz-\bx$. Thus, when $\|\bh\|\leq\frac{1}{11}\|\bz\|$, for any outcome from $\ME_1^k(\bz)\cap\ME_3^k(\bx)\cap\ME_4^k(\bz)$, we have 
\begin{align*}
|\ba_k^\top\bh|&\leq |\ba_k^\top\bz|+\sqrt{\lab y_k-|\ba_k^\top\bz|^2\rab+|\ba_k^\top\bz|^2}\\
&\leq |\ba_k^\top\bz|+\sqrt{3\tau_h\|\bh\| \lb |\ba_k^\top\bz|+\sqrt{y_k} \rb+|\ba_k^\top\bz|^2}\\
&\leq \tau_z\|\bz\|+\sqrt{ 0.3\tau_h\|\bz\| \lb \tau_z\|\bz\|+ 1.1\tau_x(\|\bz\| +\|\bh\|)\rb+\tau_z^2\|\bz\|^2}\\
&\leq \tau_{h,z}\|\bz\|,
\end{align*}
which completes the proof.
\end{proof}

\subsection{Spectral norm of random matrices with truncation}\label{subsec:spectral_norm}
The following technical lemma  which might be of independent interest will be used repeatedly in our analysis. It
provides a uniform bound for a set of random matrices parameterized by an arbitrary vector.
\begin{lemma}\label{lem:key1}
Fix $\gamma \geq 1$ and let $\epsilon\in(0,1)$ be a sufficiently small constant. With probability at least $1-e^{-\Omega(m\epsilon^2)}$,
 \begin{align*}
\ln\frac{1}{m}\sum_{k=1}^m\ba_k\ba_k^\top\dsone_{\lcb|\ba_k^\top\bz|>\gamma\ln\bz\rn\rcb}\rn\leq 5\gamma e^{-0.49\gamma^2}+\epsilon\numberthis\label{eq:trunc_2}
 \end{align*}
 holds uniformly for all $\ln\bz\rn\neq 0$ provided $m\gtrsim \epsilon^{-2}\log\epsilon^{-1}\cdot n$.
 \end{lemma}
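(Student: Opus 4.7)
The approach breaks into three steps: compute the expectation of the truncated random matrix, establish concentration at a fixed $\bz$, and then promote to a uniform bound by handling the discontinuity of the indicator via a sandwich/covering argument.

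By positive $0$-homogeneity in $\bz$, I may restrict to $\|\bz\|=1$. By rotational invariance of the standard Gaussian, $\mathbb{E}[\ba\ba^\top \dsone_{\{|\ba^\top\bz|>\gamma\}}]$ is unitarily equivalent to its value at $\bz=\be_1$, where it is diagonal with leading entry $\mathbb{E}[a_1^2\dsone_{\{|a_1|>\gamma\}}]=2\gamma\phi(\gamma)+2\Phi(-\gamma)$ and remaining entries $2\Phi(-\gamma)$. Elementary Gaussian tail estimates then bound the spectral norm of this expectation by roughly $4\gamma e^{-\gamma^2/2}$ for $\gamma\ge 1$, with enough slack to arrive at $5\gamma e^{-0.49\gamma^2}$ after the threshold relaxation used below.

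For a fixed $\bz$ and a fixed $\bv\in S^{n-1}$, the variables $(\ba_k^\top\bv)^2\dsone_{\{|\ba_k^\top\bz|>\gamma\}}$ are i.i.d., sub-exponential, with $\psi_1$-norm bounded by an absolute constant. Bernstein's inequality yields
\[
\Big|\frac{1}{m}\sum_{k=1}^m (\ba_k^\top\bv)^2 \dsone_{\{|\ba_k^\top\bz|>\gamma\}}-\mathbb{E}[\cdot]\Big|\le \epsilon/2
\]
with probability at least $1-2e^{-cm\epsilon^2}$. A standard $1/4$-net of $S^{n-1}$ (of cardinality $9^n$), together with $\|M\|\le 2\sup_{\bv\in\mathcal{N}}|\bv^\top M\bv|$, converts this quadratic-form bound into a spectral-norm bound for fixed $\bz$, requiring $m\gtrsim \epsilon^{-2} n$.

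The main obstacle is the third step, since $\dsone_{\{|\ba^\top\bz|>\gamma\}}$ is discontinuous in $\bz$ and admits no Lipschitz interpolation. My plan is to use the sandwich
\[
\{|\ba^\top\bz|>\gamma\}\;\subseteq\;\{|\ba^\top\bz_0|>\gamma-\eta\|\ba\|\},\qquad \|\bz-\bz_0\|\le\eta,
\]
so that on the high-probability event $\{\max_{k\le m}\|\ba_k\|\le \beta\sqrt{n}\}$ one obtains the PSD domination
\[
\sum_k \ba_k\ba_k^\top \dsone_{\{|\ba_k^\top\bz|>\gamma\}}\;\preceq\;\sum_k \ba_k\ba_k^\top \dsone_{\{|\ba_k^\top\bz_0|>\gamma'\}},\qquad \gamma':=\gamma-\eta\beta\sqrt{n}.
\]
Choosing $\eta$ so that $\gamma'\ge 0.99\gamma$ keeps the expected-matrix bound controlled at $5\gamma e^{-0.49\gamma^2}$, since $(0.99)^2/2>0.49$. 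Union bounding the pointwise result from step two, applied at threshold $\gamma'$, over a net of $S^{n-1}$ at this resolution then yields the claimed uniform bound. The slight slack between the natural exponent $\gamma^2/2$ and the advertised $0.49\gamma^2$ is precisely what is needed to absorb the sandwich loss; tracking the combined $9^n$ inner net and the outer covering of $S^{n-1}$ produces the sample-complexity requirement $m\gtrsim \epsilon^{-2}\log(\epsilon^{-1})\cdot n$.
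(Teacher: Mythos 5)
Your first two steps (expectation computation and fixed-$\bz$ concentration) are fine, but the third step---the uniform bound over $\bz$---has a genuine gap that breaks the sampling complexity. The sandwich $\{|\ba^\top\bz|>\gamma\}\subseteq\{|\ba^\top\bz_0|>\gamma-\eta\|\ba\|\}$ degrades the threshold by $\eta\max_k\|\ba_k\|\asymp\eta\sqrt{n}$, and this $\sqrt{n}$ is unavoidable for a deterministic inclusion because $\bz-\bz_0$ can point in any direction, so $\sup_{\bv\in S^{n-1}}\max_k|\ba_k^\top\bv|=\max_k\|\ba_k\|$. Keeping $\gamma'\geq 0.99\gamma$ therefore forces the outer net resolution $\eta\lesssim\gamma/\sqrt{n}$, whose covering of $S^{n-1}$ has cardinality $e^{\Omega(n\log n)}$. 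Union bounding a failure probability $e^{-cm\epsilon^2}$ over such a net requires $m\gtrsim\epsilon^{-2}\,n\log n$, not the claimed $m\gtrsim\epsilon^{-2}\log(\epsilon^{-1})\,n$; your final sentence asserting the latter does not follow from the construction. Since the entire point of this lemma (and the paper) is the optimal linear scaling $m\gtrsim n$, the extra $\log n$ is not a cosmetic loss.

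The paper avoids this by never covering $\bz$ at a dimension-dependent resolution. It first decouples the two directions: writing the quadratic form as $\frac1m\sum_k|\ba_k^\top\bw|^2\dsone_{\{|\ba_k^\top\bz|>\gamma\}}$ and splitting on whether $|\ba_k^\top\bw|\le\zeta$, it reduces to (i) $\frac{\zeta^2}{m}\sum_k\dsone_{\{|\ba_k^\top\bz|>\gamma\}}$, which depends only on $\bz$, and (ii) $\frac1m\sum_k|\ba_k^\top\bw|^2\dsone_{\{|\ba_k^\top\bw|>\zeta\}}$, which depends only on $\bw$ and is handled on the finite $1/4$-net. For (i), the discontinuous indicator is dominated by a piecewise-linear surrogate $f$ with $\dsone_{\{t>\gamma\}}\le f(t)\le\dsone_{\{t>(1-\delta)\gamma\}}$ such that $f(\sqrt{\tau})$ is Lipschitz in $\tau$ with a constant depending only on $\gamma,\delta$; combined with the uniform bound $\frac1m\sum_k|\ba_k^\top\bv|^2\lesssim\|\bv\|^2$, the empirical average of $f(|\ba_k^\top\bz|)$ acquires a dimension-free modulus of continuity in $\bz$, so an $\epsilon$-net (resolution $\epsilon$, independent of $n$) suffices and yields exactly $m\gtrsim\epsilon^{-2}\log(\epsilon^{-1})\,n$. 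If you want to salvage your route, you would need to replace the worst-case bound $|\ba_k^\top(\bz-\bz_0)|\le\|\ba_k\|\,\|\bz-\bz_0\|$ by some such Lipschitz smoothing of the indicator; as written, the argument does not deliver the stated lemma.
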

 \begin{proof}
 By homogeneity, we only need to establish \eqref{eq:trunc_2} for the case where $\|\bz\|=1$. Let $\N_{1/4}$ be the $1/4$-net of $S^{n-1}$. By \cite[Lemma~5.4]{Ver2011rand}, it suffices to show that 
 \begin{align*}
  \frac{1}{m}\sum_{k=1}^m|\ba_k^\top\bw|^2\dsone_{\lcb|\ba_k^\top\bz|>\gamma\rcb}\leq 2.5\gamma e^{-0.49\gamma^2}+\epsilon
 \end{align*}
 holds simultaneously for all $\bw\in\N_{1/4}$ and $\bz\in S^{n-1}$. We first have 
 \begin{align*}
 &\frac{1}{m}\sum_{k=1}^m|\ba_k^\top\bw|^2\onesub{|\ba_k^\top\bz|>\gamma}\\
 &=\frac{1}{m}\sum_{k=1}^m|\ba_k^\top\bw|^2\onesub{|\ba_k^\top\bw|\leq\zeta}\onesub{|\ba_k^\top\bz|>\gamma}+
 \frac{1}{m}\sum_{k=1}^m|\ba_k^\top\bw|^2\onesub{|\ba_k^\top\bw|>\zeta}\onesub{|\ba_k^\top\bz|>\gamma}\\
& \leq \frac{\zeta^2}{m}\sum_{k=1}^m\onesub{|\ba_k^\top\bz|>\gamma}+
 \frac{1}{m}\sum_{k=1}^m|\ba_k^\top\bw|^2\onesub{|\ba_k^\top\bw|>\zeta}\\
 &:=\TM_1+\TM_2,
 \end{align*}
 where $\zeta>0$ is a numerical constant that can be chosen flexibly. Then we only need to bound $\TM_1$ uniformly for all $\bz\in S^{n-1}$ and bound $\TM_2$ uniformly for all $\bw\in\N_{1/4}$.
 
\paragraph{Upper bound of $\TM_1$ over $\bz\in S^{n-1}$}
To this end, define an auxiliary function $f(x)$ for $x\geq 0$ as 
  \begin{align*}
  f(x) = \begin{cases}
  1 & \mbox{if }x\in [\gamma,\infty),\\
   \frac{1}{\delta\gamma}x+\lb 1-\frac{1}{\delta}\rb& \mbox{if }x\in[(1-\delta)\gamma,\gamma]),\\
   0&\mbox{if }x\in[0,(1-\delta)\gamma),
  \end{cases}
  \end{align*}
  where  $\delta>0$ is a very small constant to be determined later. Hence $f(x)$
  is continuous on $[0,\infty)$ and $\dsone_{\lcb|\ba_k^\top\bz|>\gamma\rcb}\leq f\lb|\ba_k^\top\bz|\rb\leq \dsone_{\lcb|\ba_k^\top\bz|>(1-\delta)\gamma\rcb}$. Moreover, 
  it can be easily verified that $f\lb\sqrt{\tau}\rb$ is Lipschitz continuous with the Lipschitz constant bounded by $\frac{1}{2\delta(1-\delta)\gamma^2}$. It follows that 
  \begin{align*}
  \frac{1}{m}\sum_{k=1}^m\onesub{|\ba_k^\top\bz|\geq \gamma}\leq \frac{1}{m}\sum_{k=1}^mf\lb |\ba_k^\top\bz|\rb,\numberthis\label{eq:trunc_03}
  \end{align*}
  so it suffices to bound $\frac{1}{m}\sum_{k=1}^mf\lb |\ba_k^\top\bz|\rb$ uniformly 
  for all $\|\bz\|=1$. Note that  
  \begin{align*}
  \sup_{p\geq 1}p^{-1}\lb\mean{\lb f\lb |\ba_k^\top\bz|\rb\rb^p}\rb^{1/p}&
  \leq \sup_{p\geq 1}p^{-1}\lb\mean{\lb\onesub{|\ba_k^\top\bz|>(1-\delta)\gamma}\rb^p}\rb^{1/p}\\
 &=\sup_{p\geq 1}p^{-1}\lb \prob{|\ba_k^\top\bz|>(1-\delta)\gamma}\rb^{1/p}\\
  &\leq \sup_{p\geq 1}p^{-1}\lb \sqrt{\frac{2}{\pi}}\frac{1}{(1-\delta)\gamma}e^{-\frac{(1-\delta)^2\gamma^2}{2}}\rb^{1/p}\\
  &\leq \sup_{p\geq 1}p^{-1}e^{-\frac{(1-\delta)^2\gamma^2}{2p}}\\
  &\leq \frac{1}{(1-\delta)^2\gamma^2},
  \end{align*}
  where in the fourth line we assume $(1-\delta)\gamma\geq \sqrt{2/\pi}$ which holds for $\gamma\geq 1$ and $\delta\leq 0.2$. Thus, $f\lb|\ba_k^\top\bz|\rb$ is sub-exponential with the sub-exponential norm $\|\cdot\|_{\psi_1}$ obeying 
  \begin{align*}
  \ln f\lb|\ba_k^\top\bz|\rb\rn_{\psi_1}\lesssim \frac{1}{(1-\delta)^2\gamma^2},
  \end{align*}
  and so is $f\lb|\ba_k^\top\bz|\rb-\mean{f\lb|\ba_k^\top\bz|\rb}$.  Thus, by the Bernstein's inequality (see for example \cite{Ver2011rand}), we have 
  \begin{align*}
\frac{1}{m}\sum_{k=1}^mf\lb |\ba_k^\top\bz|\rb \leq 
  \mean{f\lb |\ba_k^\top\bz|\rb}+\frac{\epsilon}{(1-\delta)^2\gamma^2}
  \leq  \sqrt{\frac{2}{\pi}}\frac{1}{(1-\delta)\gamma}e^{-\frac{(1-\delta)^2\gamma^2}{2}}+\frac{\epsilon}{(1-\delta)^2\gamma^2}\numberthis\label{eq:trunc_02}
  \end{align*}
  holds with probability at least $1-e^{-\Omega(m\epsilon^2)}$ for $\epsilon\in(0,1)$.
  Let $\N_\epsilon$ be $\epsilon$-net of $S^{n-1}$. Applying the union bound implies \eqref{eq:trunc_02} holds for all $\bz\in\N_\epsilon$ with probability exceeding $1-e^{-\Omega(m\epsilon^2)}$ provided $m\gtrsim \epsilon^{-2}\log\epsilon^{-1}\cdot n$. 
  
For any $\bz\in S^{n-1}$, let $\bz_0\in\N_\epsilon$ be a vector satisfying $\ln\bz-\bz_0\rn\leq \epsilon$. Then we have
\begin{align*}
&\lab\frac{1}{m}\sum_{k=1}^mf\lb |\ba_k^\top\bz|\rb-\frac{1}{m}\sum_{k=1}^mf\lb |\ba_k^\top\bz_0|\rb\rab\\
&\leq \frac{1}{m}\sum_{k=1}^m\lab f\lb |\ba_k^\top\bz|\rb-f\lb |\ba_k^\top\bz_0|\rb\rab\\
&=\frac{1}{m}\sum_{k=1}^m\lab f\lb\sqrt{ |\ba_k^\top\bz|^2}\rb-f\lb \sqrt{|\ba_k^\top\bz_0|^2}\rb\rab\\
&\leq \frac{1}{2\delta(1-\delta)\gamma^2}\cdot\frac{1}{m}\sum_{k=1}^m\lab |\ba_k^\top\bz|^2-|\ba_k^\top\bz_0|^2\rab\\
&\leq \frac{1}{2\delta(1-\delta)\gamma^2}\cdot\sqrt{\frac{1}{m}\sum_{k=1}^m|\ba_k^\top(\bz+\bz_0)|^2}\sqrt{\frac{1}{m}\sum_{k=1}^m|\ba_k^\top(\bz-\bz_0)|^2}\\
&\leq \frac{1+\epsilon}{2\delta(1-\delta)\gamma^2}\cdot\|\bz+\bz_0\|\|\bz-\bz_0\|\\
&\leq \frac{(1+\epsilon)\epsilon}{2\delta(1-\delta)\gamma^2},\numberthis\label{eq:trunc_04}
\end{align*}
where in the fourth line we use the fact that $f\lb\tau\rb$ is $\frac{1}{2\delta(1-\delta)\gamma^2}$-Lipschitz, and the sixth line holds uniformly for all $\bz$ and $\bz_0$ with probability at least $1-e^{-\Omega(m\epsilon^2)}$ provided $m\gtrsim \epsilon^{-2}n$. To sum up, when $\delta\in(0,1/2)$ and $\epsilon\in (0,1)$, we can bound $\TM_1$ uniformly for all $\bz\in S^{n-1}$ as 
\begin{align*}
\TM_1\leq \sqrt{\frac{2}{\pi}}\frac{\zeta^2}{(1-\delta)\gamma}e^{-\frac{(1-\delta)^2\gamma^2}{2}}+\frac{3\zeta^2\epsilon}{\delta(1-\delta)\gamma^2}
\end{align*}
by combining \eqref{eq:trunc_03}, \eqref{eq:trunc_02} and \eqref{eq:trunc_04} together.

\paragraph{Upper bound $\TM_2$ over $\bw\in \N_{1/4}$}
It is clear that $|\ba_k^\top\bw|^2\dsone_{\lcb|\ba_k^\top\bw|>\zeta\rcb}$ is sub-exponential since $|\ba_k^\top\bw|^2\dsone_{\lcb|\ba_k^\top\bw|>\zeta\rcb}\leq |\ba_k^\top\bw|^2$ and $|\ba_k^\top\bw|^2$ is standard Chi-square and sub-exponential. Therefore, applying the Bernstein inequality yields that 
\begin{align*}
\frac{1}{m}\sum_{k=1}^m|\ba_k^\top\bw|^2\dsone_{\lcb|\ba_k^\top\bw|>\zeta\rcb}&\leq \mean{|\ba_k^\top\bw|^2\dsone_{\lcb|\ba_k^\top\bw|>\zeta\rcb}}+\epsilon\\
&\leq \sqrt{\frac{2}{\pi}}\lb\zeta+\frac{1}{\zeta}\rb e^{-\frac{\zeta^2}{2}}+\epsilon
\end{align*}
holds with probability at least $1-e^{-\Omega(m\epsilon^2)}$ for $\epsilon\in (0,1)$ being sufficiently small. Taking a union bound over $\N_{1/4}$ yields that 
\begin{align*}
\TM_2\leq \sqrt{\frac{2}{\pi}}\lb\zeta+\frac{1}{\zeta}\rb e^{-\frac{\zeta^2}{2}}+\epsilon
\end{align*}
holds for all $\bw\in\N_{1/4}$ with probability exceeding $1-e^{-\Omega(m\epsilon^2)}$ provided $m\gtrsim \epsilon^{-2} n$. \\

Finally, taking $\zeta=\gamma$ and choosing {$\delta=0.01$}  completes the proof of the lemma. 
 \end{proof}
 
 The following lemma is  a the result from  \cite{truncated_wf_supp}.
To keep the presentation self-contained, we provide a slightly different proof here based on Lemma~\ref{lem:key1}. In particular,  the dependence of the upper bound on the parameters will be made explicit.

\begin{lemma}\label{lem:key2}
Fix $\gamma\geq 2$ and let $\epsilon\in(0,1)$ be a sufficiently small constant. With probability at least $1-e^{-\Omega(m\epsilon^2)}$, 
\begin{align*}
&\ln\frac{1}{m}\sum_{k=1}^m|\ba_k^\top\bz|^2\ba_k\ba_k^\top\onesub{|\ba_k^\top\bz|\leq\gamma\|\bz\|}-\lb \vartheta_1\bz\bz^\top+ \vartheta_2\|\bz\|^2\BI\rb\rn\\
&\leq \lb10\gamma^3e^{-0.49\gamma^2}+\frac{10\gamma^2}{\epsilon}e^{-0.49\epsilon^{-2}}+\gamma^4\epsilon\rb\|\bz\|^2
\end{align*}
holds for all $\bz\in\R^n$ provided $m\gtrsim \epsilon^{-2}\log\epsilon^{-1}\cdot n$, where $ \vartheta_1:=\mean{|\xi|^4\onesub{|\xi|\leq \gamma}}-\mean{|\xi|^2\onesub{|\xi|\leq \gamma}}$ and $ \vartheta_2:=\mean{|\xi|^2\onesub{|\xi|\leq \gamma}}$ with $\xi$ being a standard normal distribution.
\end{lemma}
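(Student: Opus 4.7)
My plan is to mimic the proof strategy of Lemma~\ref{lem:key1}, combining a Lipschitz smoothing of the discontinuous truncation indicator with an auxiliary truncation that tames the heavy tails introduced by the extra factor $|\ba_k^\top\bz|^2$. First I reduce to the unit sphere by homogeneity. The expectation formula $\E\bigl[|\ba^\top\bz|^2\ba\ba^\top\onesub{|\ba^\top\bz|\leq\gamma}\bigr]=\vartheta_1\bz\bz^\top+\vartheta_2\BI$ follows from decomposing $\ba=\xi\bz+\ba_\perp$ with $\xi=\ba^\top\bz\sim\N(0,1)$ independent of the isotropic $\ba_\perp\in\bz^\perp$: the cross terms vanish by $\E[\xi^3\onesub{|\xi|\leq\gamma}]=0$, and the diagonal terms combine via the identity $\E[\xi^4\onesub{|\xi|\leq\gamma}]=\vartheta_1+\vartheta_2$.

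Next I write the spectral norm as the supremum of quadratic forms $\sup_{\|\bw\|=1}|\bw^\top\bm{M}\bw|$ (where $\bm{M}$ is the centered matrix) and reduce to a $1/4$-net in $\bw$ together with an $\epsilon$-net in $\bz$. The summand $|\ba_k^\top\bz|^2|\ba_k^\top\bw|^2\onesub{|\ba_k^\top\bz|\leq\gamma}$ is discontinuous in $\bz$ and unbounded in $\bw$, so I introduce an auxiliary truncation at level $\beta=1/\epsilon$ on $|\ba_k^\top\bw|$ and split
\[
|\ba_k^\top\bz|^2|\ba_k^\top\bw|^2\onesub{|\ba_k^\top\bz|\leq\gamma} = Y_k^{(a)}+Y_k^{(b)},
\]
where $Y_k^{(a)}$ further restricts to $\{|\ba_k^\top\bw|\leq\beta\}$ and $Y_k^{(b)}$ to its complement.

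For the bounded part, each $Y_k^{(a)}$ satisfies $Y_k^{(a)}\leq\gamma^2\beta^2=\gamma^2/\epsilon^2$ with variance $\E[(Y_k^{(a)})^2]\lesssim\gamma^4$, so Bernstein's inequality delivers a pointwise deviation of the prescribed order with probability $1-e^{-\Omega(m\epsilon^2)}$; to extend to all $\bz,\bw\in S^{n-1}$ I replace each indicator by a continuous Lipschitz cutoff exactly as in Lemma~\ref{lem:key1} and transfer by Lipschitz continuity, which yields the $\gamma^4\epsilon$ contribution in the final bound. The mismatch between the smoothed cutoff and the sharp indicator $\onesub{|\ba_k^\top\bz|\leq\gamma}$ contributes a Gaussian tail at level $\gamma$; once amplified by the $\gamma^2$ arising from $|\ba_k^\top\bz|^2$ on the truncated region, this produces the $10\gamma^3 e^{-0.49\gamma^2}$ term. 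For the tail part, $Y_k^{(b)}\leq\gamma^2|\ba_k^\top\bw|^2\onesub{|\ba_k^\top\bw|>\beta}$, so averaging and taking $\sup_{\|\bw\|=1}$ reduces control to $\gamma^2\|\frac{1}{m}\sum_k\ba_k\ba_k^\top\onesub{|\ba_k^\top\bw|>\beta}\|$, bounded uniformly by $\gamma^2(5\beta e^{-0.49\beta^2}+\epsilon)$ via Lemma~\ref{lem:key1} at parameter $\beta=1/\epsilon$; the expectation $\E Y_k^{(b)}$ adds a matching Gaussian-tail term, and together they produce the $\tfrac{10\gamma^2}{\epsilon}e^{-0.49/\epsilon^2}$ summand.

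The main obstacle will be the calibration of the auxiliary truncation level $\beta$ so that the three error sources simultaneously match the target bound: enlarging $\beta$ inflates the pointwise bound $\gamma^2\beta^2$ that drives Bernstein's sub-exponential regime, while shrinking $\beta$ worsens both the Gaussian tail $e^{-0.49\beta^2}$ and the norm bound from Lemma~\ref{lem:key1}. The choice $\beta=1/\epsilon$ is what makes the three terms of the statement appear in the claimed form. A secondary subtlety is threading the Lipschitz smoothing in $\bz$ and $\bw$ through the smooth but unbounded prefactor $|\ba_k^\top\bz|^2$; the smoothing must be applied after the $\gamma$-truncation has effectively capped this prefactor at $\gamma^2$ on its relevant support, so that the Lipschitz constants used for the net transfer remain of the right order.
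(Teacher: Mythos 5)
Your high-level architecture (net in $\bw$, net in $\bz$, smoothing of the discontinuous indicator, and Lemma~\ref{lem:key1} to absorb the tail events) is in the right spirit, but the step that carries the real weight of the lemma --- uniformity over $\bz\in S^{n-1}$ --- is not actually secured by what you wrote. After smoothing, the quantity you must transfer from the net is $\frac{1}{m}\sum_k g(|\ba_k^\top\bz|)\,|\ba_k^\top\bw|^2$ with $g$ Lipschitz, so the transfer error is a \emph{weighted} sum $\frac{1}{m}\sum_k\lab |\ba_k^\top\bz|^2-|\ba_k^\top\bz_0|^2\rab\,|\ba_k^\top\bw|^2$. Cauchy--Schwarz turns this into $\bigl(\frac{1}{m}\sum_k|\ba_k^\top(\bz-\bz_0)|^2|\ba_k^\top(\bz+\bz_0)|^2\bigr)^{1/2}\bigl(\frac{1}{m}\sum_k|\ba_k^\top\bw|^4\bigr)^{1/2}$, and the fourth-moment factor is \emph{not} uniformly $O(1)$ at $m\asymp n$ (take $\bw$ aligned with some $\ba_k$; this is exactly the failure of RIP that motivates truncation in the first place). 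Your flagged fix --- that the $\gamma$-truncation caps the prefactor at $\gamma^2$ --- addresses the wrong factor: $|\ba_k^\top\bz|^2$ is indeed capped, but the problematic weight is $|\ba_k^\top\bw|^2$. Retaining your auxiliary cap $|\ba_k^\top\bw|\le 1/\epsilon$ through the transfer does tame it, but then the bounded summand has magnitude $\gamma^2/\epsilon^2$, and Bernstein at deviation $\gamma^4\epsilon$ gives only $e^{-\Omega(m\epsilon^3)}$, which after the union bound over the net degrades the sampling complexity to $m\gtrsim\epsilon^{-3}\log\epsilon^{-1}\cdot n$ rather than the stated $\epsilon^{-2}\log\epsilon^{-1}\cdot n$ with failure probability $e^{-\Omega(m\epsilon^2)}$. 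The auxiliary truncation is also unnecessary for the pointwise concentration: $|\ba_k^\top\bz|^2|\ba_k^\top\bw|^2\onesub{|\ba_k^\top\bz|\le\gamma}\le\gamma^2|\ba_k^\top\bw|^2$ is already sub-exponential with norm $\lesssim\gamma^2$, so sub-exponential Bernstein gives deviation $\gamma^2\epsilon$ at probability $e^{-\Omega(m\epsilon^2)}$ directly.

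The paper's proof resolves the $\bz$-uniformity differently, and this also explains why your accounting of the error terms diverges from theirs. For fixed $\bz$ it decomposes $\bw=(\bw^\top\bz)\bz+\tilde\bz$ and applies Hoeffding/Bernstein termwise (yielding the $\gamma^4\epsilon$ term), then passes to an $\epsilon^2$-net in $\bz$ and bounds the perturbation by splitting each summand according to whether $|\ba_k^\top(\bz-\bz_0)|\le\epsilon$ (contributing $2\gamma\epsilon\|\frac{1}{m}\sum_k\ba_k\ba_k^\top\|$, with the $\bw$-dependence absorbed into a spectral norm rather than a weighted scalar sum) or $|\ba_k^\top(\bz-\bz_0)|>\epsilon=\epsilon^{-1}\|\bz-\bz_0\|$, the latter controlled by Lemma~\ref{lem:key1} at level $\epsilon^{-1}$ --- this, not a cap on $|\ba_k^\top\bw|$, is the true origin of the $\frac{10\gamma^2}{\epsilon}e^{-0.49\epsilon^{-2}}$ term; the indicator mismatch $\onesub{|\ba_k^\top\bz|\le\gamma}\ne\onesub{|\ba_k^\top\bz_0|\le\gamma}$ gives the $10\gamma^3e^{-0.49\gamma^2}$ term via Lemma~\ref{lem:key1} at level $\gamma$, with no smoothing of the indicator needed at all in this lemma. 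To repair your argument you would need to carry out the net transfer in essentially this form, keeping a truncation (on $|\ba_k^\top(\bz-\bz_0)|$ or on $|\ba_k^\top\bw|$) alive inside the perturbation sum and re-balancing the net resolution against the Bernstein cap.
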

\begin{proof}
For fixed $\bz\in S^{n-1}$, the proof is standard and can also be found for example in \cite{CLS:TIT:15}. By \cite[Lemma~5.4]{Ver2011rand}, it suffices to bound 
\begin{align*}
\lab \frac{1}{m}\sum_{k=1}^m |\ba_k^\top\bz|^2|\ba_k^\top\bw|^2\onesub{|\ba_k^\top\bz|\leq\gamma}-
\lb \vartheta_1|\bz^\top\bw|^2+ \vartheta_2\|\bw\|^2\rb\rab
\end{align*}
for all $\bw\in\N_{1/4}$, where $\N_{1/4}$ is a $1/4$-net of $S^{n-1}$. To this end, a simple calculation yields that 
\begin{align*}
&\lab \frac{1}{m}\sum_{k=1}^m |\ba_k^\top\bz|^2|\ba_k^\top\bw|^2\onesub{|\ba_k^\top\bz|\leq\gamma}-
\lb \vartheta_1|\bz^\top\bw|^2+ \vartheta_2\|\bw\|^2\rb\rab\\
&\leq (\bw^\top\bz)^2\lab\frac{1}{m}\sum_{k=1}^m(\ba_k^\top\bz)^4\onesub{|\ba_k^\top\bz|\leq\gamma}-(\vartheta_1+\vartheta_2)\rab\\
&+2|\bw^\top\bz|\lab\frac{1}{m}\sum_{k=1}^m(\ba_k^\top\bz)^3(\ba_k^\top\tilde{\bz})\onesub{|\ba_k^\top\bz|\leq\gamma}\rab\\
&+\|\tilde{\bz}\|^2\lab\frac{1}{m}\sum_{k=1}^m(\ba_k^\top\bz)^2\onesub{|\ba_k^\top\bz|\leq\gamma}-\vartheta_2\rab\\
&+\lab\frac{1}{m}\sum_{k=1}^m(\ba_k^\top\bz)^2\lb(\ba_k^\top\tilde{\bz})^2-\|\tilde{\bz}\|^2\rb\onesub{|\ba_k^\top\bz|\leq\gamma}\rab.
\end{align*}
where we have used the decomposition $\bw=(\bw^\top\bz)\bz+\tilde{\bz}$ with $\tilde{\bz}\perp\bz$ and $1=\|\bw\|^2=(\bw^\top\bz)^2+\|\tilde{\bz}\|^2$. After applying the Hoeffding inequality to the first three terms and applying the Bernstein inequality to the last term, we can easily see that for fixed $\bw\in S^{n-1}$,
\begin{align*}
\lab \frac{1}{m}\sum_{k=1}^m |\ba_k^\top\bz|^2|\ba_k^\top\bw|^2\onesub{|\ba_k^\top\bz|\leq\gamma}-
\lb \vartheta_1|\bz^\top\bw|^2+ \vartheta_2\|\bw\|^2\rb\rab\leq (\gamma^4+\gamma^3+\gamma^2)\epsilon
\end{align*}
holds with probability at least $1-e^{-\Omega(m\epsilon^2)}$ for $\epsilon\in(0,1)$ being sufficiently small. Taking a uniform  bound over all $\bw\in\N_{1/4}$ implies that 
\begin{align*}
\ln\frac{1}{m}\sum_{k=1}^m|\ba_k^\top\bz|^2\ba_k\ba_k^\top\onesub{|\ba_k^\top\bz|\leq\gamma}-\lb \vartheta_1\bz\bz^\top+ \vartheta_2\BI\rb\rn\leq\lb\gamma^4+\gamma^3+\gamma^2\rb\epsilon\numberthis\label{eq:trunc_05}
\end{align*}
holds for fixed $\bz\in S^{n-1}$ with probability at least  $1-e^{-\Omega(m\epsilon^2)}$ provided $m\gtrsim \epsilon^{-2}n$.

To establish a uniform bound for all $\bz\in S^{n-1}$, first note that \eqref{eq:trunc_05} holds for all $\bz\in\N_{\epsilon^2}$ provided $m\gtrsim \epsilon^{-2}\log\epsilon^{-1}\cdot n$, where $\N_{\epsilon^2}$ denotes a $\epsilon^2$-net of $S^{n-1}$. For any $\bz\in S^{n-1}$, let $\bz_0$ be a vector in $\N_{\epsilon^2}$ such that $\ln\bz-\bz_0\rn\leq \epsilon^2$. Then it follows that
\begin{align*}
&\ln \frac{1}{m}\sum_{k=1}^m|\ba_k^\top\bz|^2\ba_k\ba_k^\top\onesub{|\ba_k^\top\bz|\leq\gamma}-\frac{1}{m}\sum_{k=1}^m|\ba_k^\top\bz_0|^2\ba_k\ba_k^\top\onesub{|\ba_k^\top\bz_0|\leq\gamma}\rn\\
&\leq \ln\frac{1}{m}\sum_{k=1}^m\ba_k\ba_k^\top\lb |\ba_k^\top\bz|^2-|\ba_k^\top\bz_0|^2\rb\onesub{|\ba_k^\top\bz|\leq \gamma}\onesub{|\ba_k^\top\bz_0|\leq \gamma}\rn\\
&+\ln\frac{1}{m}\sum_{k=1}^m\ba_k\ba_k^\top(\ba_k^\top\bz)^2\onesub{|\ba_k^\top\bz|\leq \gamma}\onesub{|\ba_k^\top\bz_0|> \gamma}\rn+
\ln\frac{1}{m}\sum_{k=1}^m\ba_k\ba_k^\top(\ba_k^\top\bz_0)^2\onesub{|\ba_k^\top\bz|> \gamma}\onesub{|\ba_k^\top\bz_0|\leq\gamma}\rn\\
&\leq \ln\frac{1}{m}\sum_{k=1}^m\ba_k\ba_k^\top\lb |\ba_k^\top\bz|^2-|\ba_k^\top\bz_0|^2\rb\onesub{|\ba_k^\top\bz|\leq \gamma}\onesub{|\ba_k^\top\bz_0|\leq \gamma}\onesub{|\ba_k^\top(\bz-\bz_0)|\leq\epsilon}\rn\\
&+\ln\frac{1}{m}\sum_{k=1}^m\ba_k\ba_k^\top\lb |\ba_k^\top\bz|^2-|\ba_k^\top\bz_0|^2\rb\onesub{|\ba_k^\top\bz|\leq \gamma}\onesub{|\ba_k^\top\bz_0|\leq \gamma}\onesub{|\ba_k^\top(\bz-\bz_0)|>\epsilon}\rn\\
&+\ln\frac{1}{m}\sum_{k=1}^m\ba_k\ba_k^\top(\ba_k^\top\bz)^2\onesub{|\ba_k^\top\bz|\leq \gamma}\onesub{|\ba_k^\top\bz_0|> \gamma}\rn+
\ln\frac{1}{m}\sum_{k=1}^m\ba_k\ba_k^\top(\ba_k^\top\bz_0)^2\onesub{|\ba_k^\top\bz|> \gamma}\onesub{|\ba_k^\top\bz_0|\leq\gamma}\rn\\
&\leq \ln\frac{1}{m}\sum_{k=1}^m\ba_k\ba_k^\top\lb |\ba_k^\top\bz|^2-|\ba_k^\top\bz_0|^2\rb\onesub{|\ba_k^\top\bz|\leq \gamma}\onesub{|\ba_k^\top\bz_0|\leq \gamma}\onesub{|\ba_k^\top(\bz-\bz_0)|\leq\epsilon}\rn\\
&+\ln\frac{1}{m}\sum_{k=1}^m\ba_k\ba_k^\top\lb |\ba_k^\top\bz|^2-|\ba_k^\top\bz_0|^2\rb\onesub{|\ba_k^\top\bz|\leq \gamma}\onesub{|\ba_k^\top\bz_0|\leq \gamma}\onesub{|\ba_k^\top(\bz-\bz_0)|>\epsilon^{-1}\|\bz-\bz_0\|}\rn\\
&+\ln\frac{1}{m}\sum_{k=1}^m\ba_k\ba_k^\top(\ba_k^\top\bz)^2\onesub{|\ba_k^\top\bz|\leq \gamma}\onesub{|\ba_k^\top\bz_0|> \gamma}\rn+
\ln\frac{1}{m}\sum_{k=1}^m\ba_k\ba_k^\top(\ba_k^\top\bz_0)^2\onesub{|\ba_k^\top\bz|> \gamma}\onesub{|\ba_k^\top\bz_0|\leq\gamma}\rn\\
&\leq 2\gamma\epsilon\ln \frac{1}{m}\sum_{k=1}^m\ba_k\ba_k^\top\rn+2\gamma^2\ln\sum_{k=1}^m\ba_k\ba_k^\top\onesub{|\ba_k^\top(\bz-\bz_0)|>\epsilon^{-1}\|\bz-\bz_0\|}\rn\\
&\quad+ \gamma^2\ln\frac{1}{m}\sum_{k=1}^m\ba_k\ba_k^\top\onesub{|\ba_k^\top\bz_0|>\gamma}\rn+ \gamma^2\ln\frac{1}{m}\sum_{k=1}^m\ba_k\ba_k^\top\onesub{|\ba_k^\top\bz| >\gamma}\rn\\
&\leq 10\gamma^3e^{-0.49\gamma^2}+\frac{10\gamma^2}{\epsilon}e^{-0.49\epsilon^{-2}}+\gamma^2\epsilon\numberthis\label{eq:trunc_06}
\end{align*}
where in the third inequality we have used the fact $\|\bz-\bz_0\|\leq\epsilon^2$, and the last inequality holds with probability $1-e^{-\Omega(m\epsilon^2)}$ provided $m\gtrsim \epsilon^{-2}\log\epsilon^{-1}\cdot n$; see \eqref{eq:event_04} and Lemma~\ref{lem:key1}.

By combining \eqref{eq:trunc_05} and \eqref{eq:trunc_06} together,  for any $\bz\in S^{n-1}$, we have 
\begin{align*}
&\ln\frac{1}{m}\sum_{k=1}^m|\ba_k^\top\bz|^2\ba_k\ba_k^\top\onesub{|\ba_k^\top\bz|\leq\gamma}-\lb \vartheta_1\bz\bz^\top+ \vartheta_2\BI\rb\rn\\
&\leq \ln \frac{1}{m}\sum_{k=1}^m|\ba_k^\top\bz|^2\ba_k\ba_k^\top\onesub{|\ba_k^\top\bz|\leq\gamma}-\frac{1}{m}\sum_{k=1}^m|\ba_k^\top\bz_0|^2\ba_k\ba_k^\top\onesub{|\ba_k^\top\bz_0|\leq\gamma}\rn\\
&+\ln\frac{1}{m}\sum_{k=1}^m|\ba_k^\top\bz_0|^2\ba_k\ba_k^\top\onesub{|\ba_k^\top\bz_0|\leq\gamma}-\lb \vartheta_1\bz_0\bz_0^\top+ \vartheta_2\BI\rb\rn\\
&+\ln\lb \vartheta_1\bz_0\bz_0^\top+ \vartheta_2\BI\rb-\lb \vartheta_1\bz\bz^\top+ \vartheta_2\BI\rb\rn\\
&\leq 10\gamma^3e^{-0.49\gamma^2}+\frac{{10}\gamma^2}{\epsilon}e^{-0.49\epsilon^{-2}}+\gamma^4\epsilon,
\end{align*}
which completes the proof.
\end{proof}
\subsection{Proof of Theorem~\ref{thm:well_conditioned}}
The following lemma which relates $\|\bz-\bx\|$ to $\|\bz\bz^\top-\bx\bx^\top\|_F$ will be used later.

\begin{lemma}\label{lem:vector_bd_matrix}
For any $\bz\in\R^n$ and $\bx\in\R^n$ satisfying $\bz^\top\bx\geq 0$, we have 
\begin{align*}
\|\bz\bz^\top-\bx\bx^\top\|_F^2\geq\frac{4}{5}\|\bz-\bx\|^2\|\bx\|^2.
\end{align*}
\end{lemma}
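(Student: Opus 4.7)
The plan is to reduce the matrix inequality to a scalar quadratic inequality that is amenable to a short case analysis. Setting $\bh := \bz-\bx$ and expanding $\bz\bz^\top-\bx\bx^\top = \bx\bh^\top+\bh\bx^\top+\bh\bh^\top$, a direct computation of the Frobenius norm squared via pairwise inner products gives
\begin{align*}
\|\bz\bz^\top-\bx\bx^\top\|_F^2 = 2\|\bx\|^2\|\bh\|^2 + \|\bh\|^4 + 2(\bx^\top\bh)^2 + 4(\bx^\top\bh)\|\bh\|^2.
\end{align*}
Subtracting $\frac{4}{5}\|\bh\|^2\|\bx\|^2$ and writing $u:=\bx^\top\bh$, $v:=\|\bh\|^2$, $w:=\|\bx\|^2$, the claim reduces to the scalar inequality
\begin{align*}
Q(u,v,w) := 2u^2 + 4uv + v^2 + \frac{6}{5}vw \geq 0,
\end{align*}
and the hypothesis $\bz^\top\bx\geq 0$ combined with $\bz=\bx+\bh$ translates into the one-sided constraint $u \geq -w$.

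Next I would view $Q$ as a quadratic in $u$ with positive leading coefficient $2$; its discriminant is $16v^2 - 8(v^2 + \frac{6}{5}vw) = 8v(v-\frac{6}{5}w)$, which naturally splits the analysis into two cases. If $v \leq \frac{6}{5}w$, the discriminant is nonpositive and $Q \geq 0$ for every $u$, so no constraint is needed. If $v > \frac{6}{5}w$, the unconstrained minimizer $u=-v$ lies strictly below $-w$ (since $v > w$), so on the feasible ray $\{u \geq -w\}$ the minimum of $Q$ is attained at the boundary $u=-w$ and simplifies to $v^2 - \frac{14}{5}vw + 2w^2$, a quadratic in $v$ whose discriminant $\frac{196}{25}w^2 - 8w^2 = -\frac{4}{25}w^2$ is negative, which establishes nonnegativity.

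I expect the main subtlety to be the genuine necessity of the hypothesis $\bz^\top\bx \geq 0$. Without it, the inequality can fail dramatically (for instance, $\bz=-\bx$ gives left-hand side $0$ but right-hand side $\frac{16}{5}\|\bx\|^4$), and it is precisely in the regime $v > \frac{6}{5}w$ that $Q$ would dip below zero near $u=-v$ were the constraint $u \geq -w$ absent. The rest of the argument is routine polynomial bookkeeping once the correct reduction and boundary-of-feasibility observation are in place.
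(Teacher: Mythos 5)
Your proof is correct: the expansion $\|\bz\bz^\top-\bx\bx^\top\|_F^2=2\|\bx\|^2\|\bh\|^2+\|\bh\|^4+2(\bx^\top\bh)^2+4(\bx^\top\bh)\|\bh\|^2$ checks out, the hypothesis $\bz^\top\bx\ge 0$ does translate exactly into $u\ge -w$, and both discriminant computations are right, so $Q\ge 0$ on the feasible half-line in every case. The paper's proof follows the same overall strategy (expand the Frobenius norm, reduce to a scalar polynomial inequality, and locate the constrained minimum of a quadratic at the boundary of the feasible set), but with a different parametrization: it normalizes $\|\bx\|=1$, works in the variables $s=\|\bz\|$ and $t=\bz^\top\bx/\|\bz\|$, uses Cauchy--Schwarz together with the sign hypothesis to confine $t$ to $[0,1]$, observes that the expression is a downward parabola in $t$ so the minimum sits at $t=0$ or $t=1$, and then checks nonnegativity of two quartics in $s$ (the $t=1$ case in fact has a double root at $s=1$, i.e.\ it equals $(s-1)^2(s^2+2s+\tfrac15)$, so the paper's claim that both endpoint expressions are ``greater than zero'' should read ``nonnegative''). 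Your change of variables $(u,v,w)=(\bx^\top\bh,\|\bh\|^2,\|\bx\|^2)$ buys two small advantages: you never invoke Cauchy--Schwarz, so you actually prove the inequality on the full half-space $u\ge-w$, which strictly contains the geometrically realizable set; and your boundary case reduces to a single quadratic in $v$ with negative discriminant rather than a quartic requiring a factorization. Your closing remark on the necessity of the hypothesis (the example $\bz=-\bx$, where the left-hand side vanishes but the right-hand side is $\tfrac{16}{5}\|\bx\|^4$) is also accurate and correctly identifies the regime $v>\tfrac65 w$ as the one where the constraint does real work.
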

\begin{proof}
Without loss of generality, assume $\|\bx\|=1$. Then, 
\begin{align*}
\|\bz\bz^\top-\bx\bx^\top\|_F^2-\frac{4}{5}\|\bz-\bx\|^2\|\bx\|^2& = \|\bz\|^4-\frac{4}{5}\|\bz\|^2-2(\bz^\top\bx)^2+\frac{8}{5}(\bz^\top\bx)+\frac{1}{5}\\
&=\|\bz\|^4-\frac{4}{5}\|\bz\|^2-2\ln\bz\rn^2t^2+\frac{8}{5}\ln\bz\rn t+\frac{1}{5}\numberthis\label{eq:local001}
\end{align*}
for since $0\leq t\leq 1$ due to $\ln\bx\rn=1$ and $\bz^\top\bx\geq 0$. Noting that \eqref{eq:local001} can be rewritten as 
\begin{align*}
\|\bz\|^4-\frac{4}{5}\|\bz\|^2-2\ln\bz\rn^2t^2+\frac{8}{5}\ln\bz\rn t+\frac{1}{5}= \|\bz\|^4-\frac{4}{5}\|\bz\|^2 -2\lb\|\bz\|t-\frac{2}{5}\rb^2+\frac{13}{25},
\end{align*}
hence for fixed $\bz$, 
\begin{align*}
\|\bz\bz^\top-\bx\bx^\top\|_F^2-\frac{4}{5}\|\bz-\bx\|^2\|\bx\|^2\geq 
\begin{cases}
\|\bz\|^4-\frac{4}{5}\|\bz\|^2+\frac{1}{5} & t = 0\\
\|\bz\|^4-\frac{14}{5}\|\bz\|^2+\frac{8}{5}\|\bz\|+\frac{1}{5} & t =1,
\end{cases}
\end{align*}
which are both greater than zero in both cases. 
\end{proof}
\begin{proof}[Proof of Theorem~\ref{thm:well_conditioned}]
Noting that for any $\BW = \bz\bw^\top+\bw\bz^\top$, there holds 
\begin{align*}
\|\BW\|_F^2=2\|\bz\|^2\|\bw\|^2+2|\bz^\top\bw|^2.
\end{align*}
\paragraph{Upper bound} Since $\dsone_{\ME_1^k(\bx)\cap \ME_1^k(\bz)\cap\ME_2^k(\bz)}\leq \dsone_{\ME_1^k(\bz)}$, one has 
\begin{align*}
\frac{1}{m}\|\A_{\bz}(\BW)\|^2& =\frac{1}{m}\sum_{k=1}^m\lab\langle\ba_k\ba_k^\top,\bz\bw^\top+\bw\bz^\top\rangle\rab^2\\
&\leq \frac{4}{m}\sum_{k=1}^m|\ba_k^\top\bz|^2|\ba_k^\top\bw|^2\dsone_{\ME_1^k(\bz)}\\
&=4\bw^\top\lb\frac{1}{m}\sum_{k=1}^m|\ba_k^\top\bz|^2\ba_k\ba_k^\top\onesub{|\ba_k^\top\bz|\leq\tau_z\|\bz\|}\rb\bw.
\end{align*}
{For fixed $\tau_z$, if we choose a sufficiently small $\epsilon$} in Lemma~\ref{lem:key2}, then with probability at least $1-e^{-\Omega(m)}$,
\begin{align*}
\ln \frac{1}{m}\sum_{k=1}^m|\ba_k^\top\bz|^2\ba_k\ba_k^\top\onesub{|\ba_k^\top\bz|\leq\tau_z\|\bz\|}-\lb \vartheta_1\bz\bz^\top+ \vartheta_2\|\bz\|^2\BI\rb\rn\leq \rho_1\|\bz\|^2\numberthis\label{eq:truncated_iso}
\end{align*}
holds for all $\bz\in\R^n$, where $ \vartheta_1:=\mean{|\xi|^4\onesub{|\xi|\leq \tau_z}}-\mean{|\xi|^2\onesub{|\xi|\leq \tau_z}}$ and $ \vartheta_2:=\mean{|\xi|^2\onesub{|\xi|\leq \tau_z}}$ with $\xi$ being a standard normal distribution. It follows that 
\begin{align*}
\bw^\top\lb\frac{1}{m}\sum_{k=1}^m|\ba_k^\top\bz|^2\ba_k\ba_k^\top\onesub{|\ba_k^\top\bz|\leq\tau_z\|\bz\|}\rb\bw\leq (\vartheta_2+\rho_1)\|\bz\|^2\|\bw\|^2+
\vartheta_1|\bz^\top\bw|^2.
\end{align*}
Therefore, for all $\bz\in\R^n$ and $\BW\in T_{\bz}$, one has 
\begin{align*}
\frac{1}{m}\|\A_{\bz}(\BW)\|^2&\leq 4(\vartheta_2+\rho_1)\|\bz\|^2\|\bw\|^2+4
\vartheta_1|\bz^\top\bw|^2\\
&=2(\vartheta_2+\rho_1)\lb 2\|\bz\|^2\|\bw\|^2+2|\bz^\top\bw|^2\rb
+4(\vartheta_1-\vartheta_2-\rho_1)|\bw^\top\bx|^2\\
&=2(\vartheta_2+\rho_1)\|\BW\|^2_F+4(\vartheta_1-\vartheta_2-\rho_1)|\bw^\top\bx|^2.
\end{align*}
The upper bound is then obtained by further noting that 
if $ \vartheta_1- \vartheta_2-\rho_1\geq 0$, then 
$$4( \vartheta_1- \vartheta_2-\rho_1)|\bz^\top\bw|^2\leq ( \vartheta_1- \vartheta_2-\rho_1)\lb 2\|\bz\|^2\|\bw\|^2+2|\bz^\top\bw|^2\rb=( \vartheta_1- \vartheta_2-\rho_1)\|\BW\|_F^2.$$
\paragraph{Lower bound} To establish the lower bound, first observe that
\begin{align*}
\dsone_{\ME_1^k(\bx)\cap\ME_1^k(\bz)\cap\ME_2^k(\bz)} &= \dsone_{\ME_1^k(\bz)}-\dsone_{\ME_1^k(\bz)}\dsone_{(\ME_2^k(\bz))^c\cup(\ME_1^k(\bx))^c}\geq \dsone_{\ME_1^k(\bz)}-\dsone_{\ME_1^k(\bz)}\dsone_{(\ME_2^k(\bz))^c}-\dsone_{\ME_1^k(\bz)}\dsone_{(\ME_1^k(\bx))^c}\\
&\geq \dsone_{\ME_1^k(\bz)}-\dsone_{\ME_1^k(\bz)}\dsone_{\lcb|\ba_k^\top\bh|>1.15\tau_h\|\bh\|\rcb}-\dsone_{\ME_1^k(\bz)}\dsone_{|\lab\ba_k^\top\bx|>0.9\tau_x\|\bx\|\rcb}
\end{align*}
where $\bh=\bz-\bx$, and in the last inequality we have used Lemmas~\ref{prop:events_prop_x} and \ref{prop:events_prop_z}.
{Note that by Lemma~\ref{lem:vector_bd_matrix}, the assumption $\|\bh\|\leq \frac{1}{11}\|\bz\|$ for Lemma~\ref{prop:events_prop_z} holds provided $\|\bz\bz^\top-\bx\bx^\top\|_F\leq \frac{1}{13}\|\bx\|^2$.} It follows that
\begin{align*}
\frac{1}{m}\ln\A_{\bz}(\BW)\rn^2 & =\frac{4}{m}\sum_{k=1}^m|\ba_k^\top\bz|^2|\ba_k^\top\bw|^2\dsone_{\ME_1^k(\bx)\cap\ME_1^k(\bz)\cap\ME_2^k(\bz)}\\
&\geq \frac{4}{m}\sum_{k=1}^m|\ba_k^\top\bz|^2|\ba_k^\top\bw|^2\onesub{|\ba_k^\top\bz|\leq\tau_z\|\bz\|}\\
&-\frac{4}{m}\sum_{k=1}^m|\ba_k^\top\bz|^2|\ba_k^\top\bw|^2\onesub{|\ba_k^\top\bz|\leq\tau_z\|\bz\|}\onesub{|\ba_k^\top\bh|\geq 1.15\tau_h\|\bh\|}\\
&-\frac{4}{m}\sum_{k=1}^m|\ba_k^\top\bz|^2|\ba_k^\top\bw|^2\onesub{|\ba_k^\top\bz|\leq\tau_z\|\bz\|}\onesub{|\ba_k^\top\bx|\geq 0.9\tau_x\|\bx\|}\\
&:=\TM_1-\TM_2-\TM_3.
\end{align*}
Next we will provide a lower bound for $\TM_1$ and upper bounds for $\TM_2$
and $\TM_3$.
By \eqref{eq:truncated_iso}, $\TM_1$ can be bounded from below as 
\begin{align*}
\TM_1\geq 4\vartheta_1|\bz^\top\bw|^2+4(\vartheta_1-\rho_1)\|\bz\|^2\|\bw\|^2.
\end{align*}
An upper bound for $\TM_2$ can be established as follows:
\begin{align*}
\TM_2&\leq 4\tau_z^2\|\bz\|^2\lb\frac{1}{m}\sum_{k=1}^m|\ba_k^\top\bw|^2\onesub{|\ba_k^\top\bh|\geq 1.15\tau_h\|\bh\|}\rb\\
&\leq 4\tau_z^2\lb 5.75\tau_he^{-0.64\tau_h^2}+\epsilon\rb\|\bw\|^2\|\bz\|^2,
\end{align*}
where the second inequality holds with probability exceeding $1-e^{-\Omega(m\epsilon^2)}$ provided $m\gtrsim \epsilon^{-2}\log\epsilon^{-1}\cdot n$; see
Lemma~\ref{lem:key1}. Similarly, $\TM_3$ can be bounded from above as 
\begin{align*}
\TM_3\leq 4\tau_z^2\lb 3.6\tau_xe^{-0.39\tau_x^2}+\epsilon\rb\|\bw\|^2\|\bz\|^2.
\end{align*}

Combining the bounds for $\TM_1$, $\TM_2$ and $\TM_3$ together yields that
\begin{align*}
\frac{1}{m}\ln\A_{\bz}(\BW)\rn^2&\geq 4 \vartheta_1|\bz^\top\bw|^2+4\lb \vartheta_2-\rho_1-\rho_2\rb\|\bz\|^2\|\bw\|^2\\
&\geq 2\min\lcb \vartheta_1, \vartheta_2-\rho_1-\rho_2\rcb\|\BW\|^2_F,
\end{align*}
where $\rho_2=6\tau_z^2\tau_he^{-0.64\tau_h^2}+4\tau_z^2\tau_xe^{-0.39\tau_x^2}+o(1)$ by taking $\epsilon$ to be sufficiently small.
\end{proof}

\subsection{Proof of Theorem~\ref{thm:weak_correlation}}
By Lemma~\ref{lem:vector_bd_matrix}, it suffices to establish the bound for 
\begin{align*}
\frac{\ln\bh\rn}{\ln\bz\rn}\leq \min\lcb\sqrt{\frac{\rho_3}{3\lb \tau_z^4+5\tau_z^3+8\tau_z^2+2\tau_h^2\rb}},\frac{\rho_3}{15\tau_z\tau_{h,z}},\frac{1}{11}\rcb.\numberthis\label{eq:basin_size1}
\end{align*}
By Theorem~\ref{thm:well_conditioned}, we have 
\begin{align*}
\ln\frac{1}{\sqrt{m}}\P_{T_{\bz}}\A_{\bz}^\top\rn
&=\sup_{\|\bb\|=1,\|\BW\|_F=1}\frac{1}{\sqrt{m}}\lab\la\BW,\P_{T_{\bz}}\A_{\bz}^\top(\bb)\ra\rab\\
&=\sup_{\|\bb\|=1,\|\BW\|_F=1}\frac{1}{\sqrt{m}}\lab\la\A_{\bz}\P_{T_{\bz}}(\BW),\bb\ra\rab\\
&\leq \sqrt{\btup}.\numberthis\label{eq:upper_lower_1}
\end{align*}
Thus, it remains to establish an upper bound for $\frac{1}{\sqrt{m}}\ln\A_{\bz}(\I-P_{T_{\bz}})(\bz\bz^\top-\bx\bx^\top)\rn$. Noting that 
\begin{align*}
(\I-P_{T_{\bz}})(\bz\bz^\top-\bx\bx^\top)& =-\lb \I-\frac{\bz\bz^\top}{\|\bz\|^2}\rb\bx\bx^\top\lb \I-\frac{\bz\bz^\top}{\|\bz\|^2}\rb=-\lb\bx-\frac{\bz^\top\bx}{\|\bz\|^2}\bz\rb\lb\bx-\frac{\bz^\top\bx}{\|\bz\|^2}\bz\rb^\top
\end{align*}
and letting $\bh=\bz-\bx$, we have 
\begin{align*}
(\I-P_{T_{\bz}})(\bz\bz^\top-\bx\bx^\top) = -\lb \frac{\bz^\top\bh}{\|\bz\|^2}\bz-\bh\rb\lb \frac{\bz^\top\bh}{\|\bz\|^2}\bz-\bh\rb^\top.
\end{align*}
It follows that (cf. Definition~\eqref{eq:Az})
\begin{align*}
&\frac{1}{m}\ln\A_{\bz}(\I-P_{T_{\bz}})(\bz\bz^\top-\bx\bx^\top)\rn^2\\&=\frac{1}{m}\sum_{k=1}^m\lb\ba_k^\top\lb \frac{\bz^\top\bh}{\|\bz\|^2}\bz-\bh\rb\rb^4\dsone_{\ME_1^k(\bx)\cap \ME_1^k(\bz)\cap\ME_2^k(\bz)}\\
&\leq\lab \frac{1}{m}\sum_{k=1}^m\frac{(\ba_k^\top\bz)^4(\bz^\top\bh)^4}{\|\bz\|^8}\dsone_{\ME_1^k(\bx)\cap \ME_1^k(\bz)\cap\ME_2^k(\bz)}\rab\\
&+\lab \frac{4}{m}\sum_{k=1}^m\frac{(\bz^\top\bh)^3}{\|\bz\|^6}(\ba_k^\top\bz)^3(\ba_k^\top\bh)\dsone_{\ME_1^k(\bx)\cap \ME_1^k(\bz)\cap\ME_2^k(\bz)}\rab\\
&+\lab \frac{6}{m}\sum_{k=1}^m\frac{(\bz^\top\bh)^2}{\|\bz\|^4}(\ba_k^\top\bz)^2(\ba_k^\top\bh)^2\dsone_{\ME_1^k(\bx)\cap \ME_1^k(\bz)\cap\ME_2^k(\bz)}\rab\\
&+\lab \frac{4}{m}\sum_{k=1}^m\frac{(\bz^\top\bh)}{\|\bz\|^2}(\ba_k^\top\bz)(\ba_k^\top\bh)^3\dsone_{\ME_1^k(\bx)\cap \ME_1^k(\bz)\cap\ME_2^k(\bz)}\rab\\
&+\lab \frac{1}{m}\sum_{k=1}^m(\ba_k^\top\bh)^4\dsone_{\ME_1^k(\bx)\cap \ME_1^k(\bz)\cap\ME_2^k(\bz)}\rab\\
&:=\TM_1+\TM_2+\TM_3+\TM_4+\TM_5.
\end{align*}
Next we will bound $\TM_i,~i=1,\cdots,5$ one after another.
\paragraph{Upper bound of $\TM_1$} Direct calculation yields that 
\begin{align*}
\TM_1\leq  \frac{1}{m}\sum_{k=1}^m\frac{|\bz^\top\bh|^4}{\|\bz\|^8}|\ba_k^\top\bz|^4\dsone_{\ME_1^k(\bz)}\leq\tau_{z}^4\|\bh\|^4.
\end{align*}
\paragraph{Upper bound of $\TM_2$} It is evident that 
\begin{align*}
\TM_2&\leq  \frac{4}{m}\sum_{k=1}^m\frac{|\bz^\top\bh|^3}{\|\bz\|^6}|\ba_k^\top\bz|^3|\ba_k^\top\bh|\dsone_{\ME_1^k(\bx)\cap \ME_1^k(\bz)\cap\ME_2^k(\bz)}\\
&\leq \frac{4}{m}\sum_{k=1}^m\frac{|\bz^\top\bh|^3}{\|\bz\|^6}|\ba_k^\top\bz|^3|\ba_k^\top\bh|\dsone_{\ME_1^k(\bz)}\\
&\leq4\tau_{z}^3\|\bh\|^3\lb\frac{1}{m}\sum_{k=1}^m|\ba_k^\top\bh|\rb\\
&\leq 4\tau_{z}^3\|\bh\|^3\sqrt{\frac{1}{m}\sum_{k=1}^m|\ba_k^\top\bh|^2}\\
&\leq 5\tau_{z}^3\|\bh\|^4,
\end{align*}
where the last inequality holds with probability at least $1-e^{-\Omega(m)}$ provided $m\gtrsim n$; see \eqref{eq:event_04}.
\paragraph{Upper bound of $\TM_3$} Similarly, we have 
\begin{align*}
\TM_3&\leq \lab \frac{6}{m}\sum_{k=1}^m\frac{(\bz^\top\bh)^2}{\|\bz\|^4}(\ba_k^\top\bz)^2(\ba_k^\top\bh)^2\dsone_{\ME_1^k(\bx)\cap \ME_1^k(\bz)\cap\ME_2^k(\bz)}\rab\\
&\leq6\tau_z^2\|\bh\|^2\frac{1}{m}\sum_{k=1}^m|\ba_k^\top\bh|^2\\
&\leq 8\tau_z^2\|\bh\|^4.
\end{align*}
\paragraph{Upper bound of $\TM_4$} By Lemma~\ref{prop:events_prop_z}, when $\|\bh\|\leq\frac{1}{11}$, we have 
\begin{align*}
\ME_1^k(\bx)\cap \ME_1^k(\bz)\cap\ME_2^k(\bz)\subset\ME_1^k(\bx)\cap \ME_1^k(\bz)\cap\ME_4^k(\bz)
\subset\lcb|\ba_k^\top\bh|\leq\tau_{h,z}\|\bz\|\rcb.\numberthis\label{eq:tmptmp}
\end{align*}
It follows that $\dsone_{\ME_1^k(\bx)\cap \ME_1^k(\bz)\cap\ME_2^k(\bz)}\leq \onesub{|\ba_k^\top\bh|\leq\tau_{h,z}\|\bz\|}$ and
\begin{align*}
\TM_4&\leq  \frac{4}{m}\sum_{k=1}^m\frac{|\bz^\top\bh|}{\|\bz\|^2}|\ba_k^\top\bz||\ba_k^\top\bh|^3\dsone_{\ME_1^k(\bx)\cap \ME_1^k(\bz)\cap\ME_2^k(\bz)}\\
&\leq 4\tau_z\|\bh\|\lb\frac{1}{m}\sum_{k=1}^m|\ba_k^\top\bh|^3\dsone_{\ME_1^k(\bx)\cap \ME_1^k(\bz)\cap\ME_2^k(\bz)}\rb\\
&\leq4\tau_z\|\bh\|\lb\frac{1}{m}\sum_{k=1}^m|\ba_k^\top\bh|^3 \onesub{|\ba_k^\top\bh|\leq\tau_{h,z}\|\bz\|}\rb\\
&\leq 4\tau_z\tau_{h,z}\|\bh\|\|\bz\|\lb\frac{1}{m}\sum_{k=1}^m|\ba_k^\top\bh|^2 \rb\\
&\leq5\tau_z\tau_{h,z}\|\bh\|^3\|\bz\|.
\end{align*}
\paragraph{Upper bound of $\TM_5$} Using \eqref{eq:tmptmp} again gives 
\begin{align*}
\TM_5&\leq \frac{1}{m}\sum_{k=1}^m|\ba_k^\top\bh|^4\onesub{|\ba_k^\top\bh|\leq\tau_{h,z}\|\bz\|}\\
&=\frac{1}{m}\sum_{k=1}^m|\ba_k^\top\bh|^4\onesub{\{|\ba_k^\top\bh|\leq\tau_{h,z}\|\bz\|\}\cap\ME_5^k(\bz)}+
\frac{1}{m}\sum_{k=1}^m|\ba_k^\top\bh|^4\onesub{\{|\ba_k^\top\bh|\leq\tau_{h,z}\|\bz\|\}\cap\lb\ME_5^k(\bz)\rb^c}\\
&\leq \frac{1}{m}\sum_{k=1}^m|\ba_k^\top\bh|^4\onesub{|\ba_k^\top\bh|\leq 1.15\tau_h\|\bh\|}+\frac{1}{m}\sum_{k=1}^m|\ba_k^\top\bh|^4\onesub{\{|\ba_k^\top\bh|\leq\tau_{h,z}\|\bz\|\}\cap\{|\ba_k^\top\bh|>1.15\tau_h\|\bh\|\}}\\
&\leq 1.15\|\bh\|^2\tau_h^2\lb\frac{1}{m}\sum_{k=1}^m|\ba_k^\top\bh|^2\rb+
\tau_{h,z}^2\|\bz\|^2\bh^\top\lb\frac{1}{m}\sum_{k=1}^m\ba_k\ba_k^\top\onesub{|\ba_k^\top\bh|>1.15\tau_h\|\bh\|}\rb\bh\\
&\leq 2\tau_h^2\|\bh\|^4+\lb6\tau_h\tau_{h,z}^2e^{-0.64\tau_h^2}+o(1)\rb\|\bz\|^2\|\bh\|^2,
\end{align*}
where {we have used  Lemma~\ref{lem:key2} in the last line by choosing sufficiently small $\epsilon$ for fixed $\tau_{h,z}^2$.}\\

Putting it all together, we have 
\begin{align*}
&\frac{1}{m}\ln\A_{\bz}(\I-\P_{T_{\bz}})(\bz\bz^\top-\bx\bx^\top)\rn^2\\
&\leq\lb \lb \tau_z^4+5\tau_z^3+8\tau_z^2+2\tau_h^2\rb\frac{\ln\bh\rn^2}{\ln\bz\rn^2}+5\tau_z\tau_{h,z}\frac{\ln\bh\rn}{\ln\bz\rn}+\lb 6\tau_h\tau_{h,z}^2e^{-0.64\tau_h^2}+o(1)\rb\rb\ln\bz\rn^2\ln\bh\rn^2\\
&\leq 1.25\lb \lb \tau_z^4+5\tau_z^3+8\tau_z^2+2\tau_h^2\rb\frac{\ln\bh\rn^2}{\ln\bz\rn^2}+5\tau_z\tau_{h,z}\frac{\ln\bh\rn}{\ln\bz\rn}+\lb 6\tau_h\tau_{h,z}^2e^{-0.64\tau_h^2}+o(1)\rb\rb\ln\bz\bz^\top-\bx\bx^\top\rn_F^2,
\end{align*}
where the last line uses Lemma~\ref{lem:vector_bd_matrix}. 
The proof is complete after combing this bound with \eqref{eq:upper_lower_1} and making proper substitutions.
\section{Conclusion and future directions}\label{sec:conclusion}
We have presented a Riemannian gradient descent algorithm and its truncated variant for solving systems of phaseless equations. Exact recovery guarantee has been established for the truncated variant, showing that the algorithm is able to achieve successful recovery with the optimal sampling complexity. In addition, empirical evaluations show that our algorithm are competitive with
other state-of-the-art first order methods. We conclude this paper by pointing out a few problems for future directions:
\begin{itemize}
\item The Riemannian gradient descent algorithms studied in this paper can be easily extended to Riemannian conjugate gradient descent algorithms which should be substantially faster. Theoretical analysis of the conjugate gradient descent type algorithms is an interesting direction for future research. 
\item Numerical simulations show that RGrad can be similarly effective provided an appropriate stepsize is used, which suggests the possibility of analyzing this vanilla Riemannian gradient descent algorithm. The leave-one-out technique that has been employed in \cite{Chen_implicit} provides a potential tool for the analysis. It is also worth investigating the convergence of the algorithms under other measurement models.
\item The algorithms presented in this paper applies equally to the problem of reconstructing a  rank-$r$ positive semidefinite matrices from rank-$1$ projection measurements, namely, solving the following systems of equations:
\begin{align*}
y_k = \langle\ba_k\ba_k^\top,\BX\rangle,\quad k=1,\cdots, m,
\end{align*}
where the unknown matrix $\BX$ is positive semidefinite and is of rank $r$. It may also be possible to establish the convergence of the algorithms for this setting. 
\item As mentioned in the introduction, geometric landscape for the problem of  solving systems of phaseless equations has been studied in \cite{SQW:FCM:18}. Notice that the analysis there is carried out in Euclidean space after parameterization. More precisely, if we submit $\BZ=\bz\bz^\top$ into \eqref{eq:low_rank}, the constraints can be removed and the reconstruction problem can be achieved by minimizing the following function:
\begin{align*}
f(\bz) = \sum_{k=1}^m\lb(\ba_k^\top\bz)^2-\by\rb^2.
\end{align*}
It was shown in \cite{SQW:FCM:18} that under the Gaussian measurement model $f$ does not have a spurious local minima provided $m\gtrsim n\log^3 n$. In a different direction, one can investigate the geometric landscape of the problem directly on manifold. In particular, it is of great interest to study the geometric property of 
\begin{align*}
f(\BZ) = \|\A(\BZ)-\by\|^2
\end{align*}
over the embedded manifold of rank-$1$ positive semidefinite matrices.  Progress torwards this direction will be reported separately.\end{itemize}
\section*{Acknowledgments}
KW would like to thank Wen Huang for a fruitful discussion about Riemannian geometry. 
\bibliographystyle{abbrv}
\bibliography{ref}
\end{document}